\documentclass[a4paper, english]{amsart}

%
% Standard macros -- need to come after font definitions
%
%
% PACKAGES
%

% Standard Packages
\usepackage{amssymb}
\usepackage{babel}
\usepackage{enumitem}
\usepackage{hyperref}
\usepackage[utf8]{inputenc}
\usepackage{newunicodechar}
\usepackage{varioref}
\usepackage[arrow,curve,matrix]{xy}

% Graphics Packages
\usepackage{colortbl}
\usepackage{graphicx}
\usepackage{tikz}

% Font packages
\usepackage{mathrsfs}

%
% GENERAL TYPESETTING
%

% Colours for hyperlinks
\definecolor{linkred}{rgb}{0.7,0.2,0.2}
\definecolor{linkblue}{rgb}{0,0.2,0.6}

% Limit table of contents to section titles
\setcounter{tocdepth}{1}

% Numbering of figures (see below for numbering of equations)
\numberwithin{figure}{section}

% Add an uparrow to the bibliography entries, just before the back-list of references
\usepackage[hyperpageref]{backref}

% Numbering of parts in roman numbers

% Sloppy formatting -- often looks better
\sloppy

% Changes the layout of descriptions and itemized lists. The indent specified in
% the original amsart style is too much for my taste.
\setdescription{labelindent=\parindent, leftmargin=2\parindent}
\setitemize[1]{labelindent=\parindent, leftmargin=2\parindent}
\setenumerate[1]{labelindent=0cm, leftmargin=*, widest=iiii}

%
% Input characters
%
\newunicodechar{α}{\ensuremath{\alpha}}
\newunicodechar{β}{\ensuremath{\beta}}
\newunicodechar{χ}{\ensuremath{\chi}}
\newunicodechar{δ}{\ensuremath{\delta}}
\newunicodechar{ε}{\ensuremath{\varepsilon}}
\newunicodechar{Δ}{\ensuremath{\Delta}}
\newunicodechar{η}{\ensuremath{\eta}}
\newunicodechar{γ}{\ensuremath{\gamma}}
\newunicodechar{Γ}{\ensuremath{\Gamma}}
\newunicodechar{ι}{\ensuremath{\iota}}
\newunicodechar{κ}{\ensuremath{\kappa}}
\newunicodechar{λ}{\ensuremath{\lambda}}
\newunicodechar{Λ}{\ensuremath{\Lambda}}
\newunicodechar{μ}{\ensuremath{\mu}}
\newunicodechar{ω}{\ensuremath{\omega}}
\newunicodechar{Ω}{\ensuremath{\Omega}}
\newunicodechar{π}{\ensuremath{\pi}}
\newunicodechar{φ}{\ensuremath{\phi}}
\newunicodechar{Φ}{\ensuremath{\Phi}}
\newunicodechar{ψ}{\ensuremath{\psi}}
\newunicodechar{Ψ}{\ensuremath{\Psi}}
\newunicodechar{ρ}{\ensuremath{\rho}}
\newunicodechar{σ}{\ensuremath{\sigma}}
\newunicodechar{Σ}{\ensuremath{\Sigma}}
\newunicodechar{τ}{\ensuremath{\tau}}
\newunicodechar{θ}{\ensuremath{\theta}}
\newunicodechar{Θ}{\ensuremath{\Theta}}

\newunicodechar{→}{\ensuremath{\to}}
\newunicodechar{⨯}{\ensuremath{\times}}
\newunicodechar{∪}{\ensuremath{\cup}}
\newunicodechar{∩}{\ensuremath{\cap}}
\newunicodechar{⊇}{\ensuremath{\supseteq}}
\newunicodechar{⊃}{\ensuremath{\supset}}
\newunicodechar{⊆}{\ensuremath{\subseteq}}
\newunicodechar{⊂}{\ensuremath{\subset}}
\newunicodechar{≥}{\ensuremath{\geq}}
\newunicodechar{≤}{\ensuremath{\leq}}
\newunicodechar{∈}{\ensuremath{\in}}
\newunicodechar{◦}{\ensuremath{\circ}}
\newunicodechar{°}{\ensuremath{^\circ}}
\newunicodechar{…}{\ifmmode\mathellipsis\else\textellipsis\fi}

\newunicodechar{¹}{\ensuremath{^1}}
\newunicodechar{²}{\ensuremath{^2}}
\newunicodechar{³}{\ensuremath{^3}}

%
% FONT DEFINTIONS
%

% Script Font used for sheaves
\DeclareFontFamily{OMS}{rsfs}{\skewchar\font'60}
\DeclareFontShape{OMS}{rsfs}{m}{n}{<-5>rsfs5 <5-7>rsfs7 <7->rsfs10 }{}
\DeclareSymbolFont{rsfs}{OMS}{rsfs}{m}{n}
\DeclareSymbolFontAlphabet{\scr}{rsfs}
\DeclareSymbolFontAlphabet{\scr}{rsfs}

%
% MATHEMATICS DEFINITIONS
%

% Operators

\DeclareMathOperator{\Hom}{Hom}

\DeclareMathOperator{\red}{red}
\DeclareMathOperator{\reg}{reg}

\DeclareMathOperator{\Spec}{Spec}

\DeclareMathOperator{\supp}{supp}
\DeclareMathOperator{\tor}{tor}

% Sheaves

\newcommand{\sF}{\scr{F}}

\newcommand{\sI}{\scr{I}}

\newcommand{\sO}{\scr{O}}

% Blackboard Bold Symbols
\newcommand{\bA}{\mathbb{A}}

\newcommand{\bC}{\mathbb{C}}

\newcommand{\bH}{\mathbb{H}}

\newcommand{\bN}{\mathbb{N}}

\newcommand{\bZ}{\mathbb{Z}}

% Theorem type environments
\theoremstyle{plain}   
\newtheorem{thm}{Theorem}[section]

\newtheorem{prop}[thm]{Proposition}

\theoremstyle{remark}

\newtheorem{c-n-d}[thm]{Claim and Definition}

\newtheorem*{rem-nonumber}{Remark}

% Numbering of equations. Number equation subordniate to theorems.
\numberwithin{equation}{thm}

% Style for enumerated lists. The following makes sure that enumerated lists are
% numbered in the same way as equations are.
\setlist[enumerate]{label=(\thethm.\arabic*), before={\setcounter{enumi}{\value{equation}}}, after={\setcounter{equation}{\value{enumi}}}}

% Shorthand notations

\newcommand{\wtilde}{\widetilde}

%
% HYPENTATION
%

%
% SPECIALIZED MACROS
%

% Overlapping text
\def\clap#1{\hbox to 0pt{\hss#1\hss}}

\def\mathclap{\mathpalette\mathclapinternal}

\def\mathclapinternal#1#2{%
\clap{$\mathsurround=0pt#1{#2}$}}

% CounterStep - increases equation counter

% factor - quotient groups
\newcommand{\factor}[2]{\left. \raise 2pt\hbox{$#1$} \right/\hskip -2pt\raise -2pt\hbox{$#2$}}%
% Macros to produce different text for different versions of the paper.
%
\newcommand{\Preprint}[1]{}

%
% No subversion info and no approval boxes anymore
%
\newcommand{\subversionInfo}{}
\newcommand{\svnid}[1]{}
\newcommand{\approvals}[1]{}

%
% Define a dummy command for low-level TeX programming
%

%
% Local definitions
%

%topologies
\newcommand{\Zar}{\mathrm{Zar}}
\newcommand{\cdh}{\mathrm{cdh}}
\newcommand{\cdp}{\mathrm{cdp}}
\newcommand{\sdh}{\mathrm{sdh}}
\newcommand{\h}{\mathrm{h}}
\newcommand{\eh}{\mathrm{eh}}
\newcommand{\rh}{\mathrm{rh}}

%operators
\newcommand{\salt}{\mathrm{s\mbox{-}alt}}
\newcommand{\stimes}{⨯^{\salt}}
\newcommand{\isom}{\cong}
\newcommand{\tensor}{\otimes}
\newcommand{\Frac}{\mathsf{Frac}} % Quotient field
\newcommand{\RZ}{\mathsf{RZ}} % Riemann-Zariski space

%categories
\newcommand{\Sch}{\mathsf{Sch}}
\newcommand{\Sm}{\mathsf{Sm}}
\newcommand{\Reg}{\mathsf{Reg}}
\newcommand{\Cur}{\mathsf{Dvr}}
\newcommand{\ess}{\ensuremath{^{\mathsf{ess}}}}
\newcommand{\Pro}{\mathsf{Pro}}

%other
\newcommand{\rs}{\mathrm{dvr}}
\newcommand{\Mor}{\mathrm{Mor}}

%theorem environments
\theoremstyle{theorem}
\newtheorem{theo}[thm]{Theorem}
\newtheorem{coro}[thm]{Corollary}
\newtheorem{lemm}[thm]{Lemma}
\newtheorem{propo}[thm]{Proposition}

\theoremstyle{definition}
\newtheorem{defi}[thm]{Definition}
\newtheorem{propdef}[thm]{Definition and Proposition}
\newtheorem{obse}[thm]{Observation}
\newtheorem{rema}[thm]{Remark}
\newtheorem{remi}[thm]{Reminder}
\newtheorem{exam}[thm]{Example}
\newtheorem{summ}[thm]{Summary}
\newtheorem{nota}[thm]{Notation}
\newtheorem{warn}[thm]{Warning}

\newcounter{dummy}

\newtheorem{hypoAlph}[dummy]{Hypothesis}

%letters
\DeclareSymbolFontAlphabet{\scr}{rsfs}
\newcommand{\Fh}{\sF}
\newcommand{\Oh}{\sO}

%
% Author contact details
%

\author{Annette Huber} %
\address{Annette Huber, Mathematisches Institut, Albert-Ludwigs-Universität
  Freiburg, Eckerstraße 1, 79104 Freiburg im Breisgau, Germany}
\email{\href{mailto:annette.huber@math.uni-freiburg.de}{annette.huber@math.uni-freiburg.de}}

\author{Stefan Kebekus} %
\address{Stefan Kebekus, Mathematisches Institut, Albert-Ludwigs-Universität
  Freiburg, Eckerstraße 1, 79104 Freiburg im Breisgau, Germany and University of
  Strasbourg Institute for Advanced Study (USIAS), Strasbourg, France}
\email{\href{mailto:stefan.kebekus@math.uni-freiburg.de}{stefan.kebekus@math.uni-freiburg.de}}
\urladdr{\url{http://home.mathematik.uni-freiburg.de/kebekus}}

\author{Shane Kelly} %
\address{Shane Kelly, Interactive Research Center of Science, Graduate School of
  Science and Engineering, Tokyo Institute of Technology, 2-12-1 O-okayama,
  Meguro, Tokyo 152-8551 Japan}
\email{\href{mailto:shanekelly64@gmail.com}{shanekelly64@gmail.com}}

%
% Acknowledgements
%

\thanks{Stefan Kebekus gratefully acknowledges the support through a joint
  fellowship of the Freiburg Institute of Advanced Studies (FRIAS) and the
  University of Strasbourg Institute for Advanced Study (USIAS).  Shane Kelly
  gratefully acknowledges the support of the Alexander von Humboldt Foundation
  and the DFG through the SFB Transregio 45 via a post-doc in Marc Levine's
  workgroup, and the support of the Japan Society for the Promotion of Science
  via a Grant-in-Aid.}

%
% Document metadata
%

\keywords{differential forms, singularities, cdh-topology}
\subjclass[2010]{14F10(primary), 14F20, 14J17, 14F40 (secondary)}
\title[Differential forms in positive characteristic]{Differential forms in positive characteristic avoiding resolution of singularities}
\markboth{Differential forms in positive characteristic}{}
\date{\today}

\makeatletter
\hypersetup{
  pdfauthor={\authors},
  pdftitle={\@title},
  pdfsubject={\@subjclass},
  pdfkeywords={\@keywords},
  pdfstartview={Fit},
  pdfpagelayout={TwoColumnRight},
  pdfpagemode={UseOutlines},
  bookmarks,
  colorlinks,
  linkcolor=linkblue,
  citecolor=linkred,
  urlcolor=linkred
}
\makeatother

\begin{document}

\maketitle

\begin{abstract}
  This paper studies several notions of sheaves of differential forms that are
  better behaved on singular varieties than Kähler differentials.  Our main
  focus lies on varieties that are defined over fields of positive
  characteristic.  We identify two promising notions: the sheafification with
  respect to the cdh-topology, and right Kan extension from the subcategory of
  smooth varieties to the category of all varieties.  Our main results are that
  both are cdh-sheaves and agree with Kähler differentials on smooth
  varieties.  They agree on all varieties under weak resolution of singularities.
 
  A number of examples highlight the difficulties that arise with torsion forms
  and with alternative candiates.
\end{abstract}

\approvals{
  Annette & yes \\
  Shane & yes \\
  Stefan & yes
}

\setcounter{tocdepth}{1}
\tableofcontents

% !TEX root = sdh.tex
%
% Do not edit the following line.  The text is automatically updated by
% subversion.
%
\svnid{$Id: 01.tex 246 2015-02-23 09:33:16Z kebekus $}

\section{Introduction}
\label{sec:intro}
\subversionInfo
\approvals{
  Annette & yes \\
  Shane & yes \\
  Stefan & yes
}

Sheaves of differential forms play a key role in many areas of algebraic and
arithmetic geometry, including birational geometry and singularity theory.  On
singular schemes, however, their usefulness is limited by bad behaviour such as
the presence of torsion sections.  There are a number of competing modifications
of these sheaves, each generalising one particular aspect.  For a survey see the
introduction of \cite{HJ}.

In this article we consider two modifications, $Ω^n_{\cdh}$ and $Ω_{\rs}^n$, to
the presheaves $Ω^n$ of relative $k$-differentials on the category $\Sch(k)$ of
separated finite type $k$-schemes.  By $Ω_{\cdh}^n$ we mean the sheafification
of $Ω^n$ with respect to the cdh topology, cf.~Definition~\ref{def:cdhdiffs},
and by $Ω_{\rs}^n$ we mean the right Kan extension along the inclusion $\Sm(k) →
\Sch(k)$ of the restriction of $Ω^n$ to the category $\Sm(k)$ of smooth
$k$-schemes, cf.~Definition~\ref{defi:rs-differentials}.

The following are three of our main results.

\begin{thm}\ \label{1}
  Let $k$ be a perfect field and $n ≥ 0$.
  \begin{enumerate}
  \item \label{theo1:part:cdhregular} (Theorem~\ref{prop:cdhDescentModTorsion}).
    If $X$ is a smooth $k$-variety then $Ω^n(X) \cong Ω^n_{\cdh}(X)$.  The same
    is true in the rh- or eh-topology.
  \item \label{theo1:part:cdhrsmono} (Observation~\ref{obs:oisehs},
    Proposition~\ref{prop:cdhInRs}).  $Ω^n_{\rs}$ is a $\cdh$-sheaf and the
    canonical morphism
    $$
    Ω_{\cdh}^n → Ω^n_{\rs}
    $$
    is a monomorphism.

  \item \label{theo1:part:cdhrsiso} (Proposition~\ref{prop:cdhRsAgree}).  Under
    weak resolution of singularities, this canonical morphism is an isomorphism
    $Ω_{\cdh}^n \cong Ω^n_{\rs}$.  \qed
  \end{enumerate}
\end{thm}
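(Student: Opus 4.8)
The plan is to bootstrap from parts~\ref{theo1:part:cdhregular} and~\ref{theo1:part:cdhrsmono} by Noetherian induction on dimension, using weak resolution to reduce every scheme to smooth ones. By part~\ref{theo1:part:cdhrsmono} we already know that $Ω^n_{\rs}$ is a $\cdh$-sheaf, that $Ω^n_{\cdh}$ is one by construction, and that the canonical morphism $φ : Ω^n_{\cdh} → Ω^n_{\rs}$ is a monomorphism; it therefore remains to check surjectivity, which we obtain as a byproduct of a direct proof that each $φ_X$ is an isomorphism. Since both sheaves are $\cdh$-sheaves, they are insensitive to nilpotents (the closed immersion $X_{\red} → X$ is a $\cdh$-cover), so we may and do assume throughout that all schemes are reduced.

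First I would dispose of the smooth case, which also furnishes the base of the induction. For smooth $U$ the slice category $\Sm(k)/U$ has the terminal object $\mathrm{id}_U$; the diagram computing $Ω^n_{\rs}(U)$ as a limit is indexed by smooth schemes over $U$, and the presence of this terminal object forces the limit to equal $Ω^n(U)$. Combined with the isomorphism $Ω^n_{\cdh}(U) \cong Ω^n(U)$ of part~\ref{theo1:part:cdhregular}, this shows that $φ_U$ is the identity, hence an isomorphism. In particular, over the perfect field $k$ every reduced zero-dimensional scheme is smooth, so $φ_X$ is an isomorphism whenever $\dim X = 0$.

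The inductive step is the heart of the argument. Given reduced $X$ of dimension $d$, weak resolution of singularities supplies a proper birational morphism $π : \tilde X → X$ with $\tilde X$ smooth and $π$ an isomorphism over the dense open locus $X \setminus Z$, where $Z := X_{\sing}$ carries its reduced structure. Setting $E := π^{-1}(Z)$ produces an abstract blow-up square with vertices $E, \tilde X, Z, X$. Because $π$ is an isomorphism over the dense open $X \setminus Z$, both $Z$ and $E$ are nowhere dense, so $\dim Z < d$ and $\dim E < d$. Such a square is a distinguished square for the $\cdh$-topology, so any $\cdh$-sheaf $H$ turns it into a Cartesian square; equivalently the sequence
\[
0 → H(X) → H(\tilde X) \oplus H(Z) → H(E)
\]
is left exact. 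Applying this to $H = Ω^n_{\cdh}$ and to $H = Ω^n_{\rs}$ and comparing via $φ$ gives a commutative ladder of two left-exact rows in which the vertical map on $\tilde X$ is an isomorphism (the smooth case), while the vertical maps on $Z$ and $E$ are isomorphisms by the inductive hypothesis. An isomorphism of the middle terms compatible with an isomorphism of the targets restricts to an isomorphism of kernels, and these kernels are exactly $Ω^n_{\cdh}(X)$ and $Ω^n_{\rs}(X)$; hence $φ_X$ is an isomorphism, completing the induction.

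The main obstacle is \emph{not} the homological bookkeeping: because the Cartesian-square property of $\cdh$-sheaves already yields left exactness, the kernel comparison needs no control over $H^1_{\cdh}$, and the five-lemma step is elementary. The real care lies in the geometry feeding the induction. One must verify that weak resolution applies to an arbitrary, possibly reducible and non-equidimensional, reduced $X$; that the square built from $π$, $Z$, and $E$ is genuinely an abstract blow-up square in the sense that makes it $\cdh$-distinguished; and, crucially, that \emph{both} $\dim Z < \dim X$ and $\dim E < \dim X$, so that the inductive hypothesis is available on the two lower vertices. The preliminary reduction to reduced schemes via $\cdh$-invariance under nilpotents is precisely what makes the regular locus dense and guarantees this dimension drop.
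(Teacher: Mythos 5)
Your argument for part~\ref{theo1:part:cdhrsiso} is essentially the paper's own proof of Proposition~\ref{prop:cdhRsAgree}: induction on dimension, reduction to reduced $X$, a desingularisation $\widetilde{X} → X$ with $Z$ and $E$ of strictly smaller dimension, the left-exact descent sequence for the abstract blow-up square applied to both $\cdh$- and $\rs$-sheaves, and a kernel comparison using the smooth case on $\widetilde{X}$ and the inductive hypothesis on $Z$ and $E$. Parts~\ref{theo1:part:cdhregular} and~\ref{theo1:part:cdhrsmono} are cited as inputs in both your write-up and the paper, so the proposal is correct and takes the same route.
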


Part~\ref{theo1:part:cdhregular} was already observed by Geisser, assuming a
strong form of resolution of singularities, \cite{Gei06}.  We are able to give a
proof which does not assume any conjectures.  The basic input into the proof is
a fact about torsion forms (Theorem~\ref{hypH}): given a torsion form on an
integral variety, there is a blow-up such the pull-back of the form vanishes on
the blow-up.

\subsection{Comparison to known results in characteristic zero}
\approvals{
  Annette & yes \\
  Shane & yes \\
  Stefan & yes
}

This paper aims to extend the results of \cite{HJ} to positive characteristic,
avoiding to assume resolution of singularities if possible.  The following
theorem summarises the main results known in characteristic zero.

\begin{theo}[{\cite{HJ}}]\label{theo:HJ}
  Let $k$ be a field of characteristic zero, $X$ a separated finite type
  $k$-scheme, and $n ≥ 0$.
  \begin{enumerate}
  \item \label{HJvectorbundle} The restriction of $Ω^n_\h$ to the small Zariski
    site of $X$ is a torsion-free coherent sheaf of $\mathcal{O}_X$-modules.

  \item \label{HJtorsionfree} If $X$ is reduced we have
    $$
    Ω^n(X) / \{ \textrm{ torsion elements }\} ⊆ Ω_\h^n(X)
    $$
    and if $X$ is Zariski-locally isomorphic to a normal crossings divisor in a
    smooth variety then
    $$
    Ω^n(X) / \{ \textrm{ torsion elements }\} \cong Ω_\h^n(X).
    $$

  \item \label{HJsmoothcohomology} If $X$ is smooth, then $Ω^n(X) \cong
    Ω_\h^n(X)$ and $H_{Zar}^i(X, Ω^n) \cong H_\h^i(X, Ω_\h^n)$ for all $i ≥ 0$.
    The same is true using the $\cdh$- or $\eh$-topology in place of the
    $\h$-topology.

  \item \label{HJKanextension} We have $Ω_{\rs}^n \cong Ω^n_h$,
    cf.~Definition~\ref{defi:rs-differentials}.  \qed
  \end{enumerate}
\end{theo}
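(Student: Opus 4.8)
The plan is to use resolution of singularities (Hironaka), available because $k$ has characteristic zero, to reduce each assertion to an explicit computation on smooth schemes and their blow-ups. The foundation is Part~\ref{HJsmoothcohomology}. I would first show that the presheaf $\Omega^n$, restricted to $\Sm(k)$, already satisfies descent for the $\h$-, $\cdh$- and $\eh$-topologies, so that sheafification changes neither the sections nor the higher cohomology over a smooth $X$. Since $\Omega^n$ is a quasi-coherent Zariski sheaf, the Zariski and Nisnevich contributions are automatic, and the content is descent along an abstract blow-up square $(E \hookrightarrow \tilde X) \to (Z \hookrightarrow X)$ with $\pi\colon \tilde X \to X$ proper birational and $X$ smooth. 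By Hironaka every such modification factors into blow-ups with smooth centres, so it suffices to treat a single blow-up $\pi$ of a smooth centre $Z \subseteq X$ of codimension $\geq 2$ with exceptional divisor $E$. Here the heart of the matter is the known structure of $R\pi_*\Omega^n_{\tilde X}$ for such blow-ups — in particular the formula $\pi_*\Omega^n_{\tilde X} = \Omega^n_X$, which follows from reflexivity of $\Omega^n_X$ and the fact that $\tilde X \setminus E = X \setminus Z$ — which makes the descent square homotopy-cartesian and yields the comparison of cohomology. This is the technical core of the whole theorem.

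Granting the smooth case, I would prove Part~\ref{HJvectorbundle} by computing $\Omega^n_\h$ Zariski-locally through a resolution $\pi\colon \tilde X \to X$. Descent identifies $\Omega^n_\h|_{X_{\mathrm{Zar}}}$ with a subsheaf of $\pi_*\Omega^n_{\tilde X}$, which is coherent since $\pi$ is proper, and torsion-free since $\Omega^n_{\tilde X}$ is locally free on the smooth scheme $\tilde X$ and $\pi_*$ preserves torsion-freeness. For Part~\ref{HJtorsionfree}, torsion-freeness of $\Omega^n_\h$ forces the canonical map $\Omega^n \to \Omega^n_\h$ to factor through $\Omega^n(X)/\{\text{torsion}\}$; injectivity for reduced $X$ follows by restricting to the dense open smooth locus $X_{\mathrm{sm}}$, where $\Omega^n_\h = \Omega^n$ by Part~\ref{HJsmoothcohomology}, so a non-torsion form stays nonzero. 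The surjectivity in the normal crossings case I would obtain from an explicit local computation: on a normal crossings divisor the residue sequences describe $\Omega^n(X)/\{\text{torsion}\}$ and the descending sections of $\pi_*\Omega^n_{\tilde X}$ equally, and one checks that the two coincide.

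Finally, for Part~\ref{HJKanextension} I would combine the smooth case with the pointwise formula for the right Kan extension, which reads $\Omega^n_{\rs}(X) = \varprojlim_{U \to X,\, U \in \Sm(k)} \Omega^n(U)$, the limit ranging over smooth schemes mapping to $X$. This indexing category is cofiltered and the resolutions $\tilde X \to X$ are cofinal in it, so the limit is computed by $\varprojlim \Omega^n(\tilde X)$ over resolutions. Because $\Omega^n_\h$ is an $\h$-sheaf with $\Omega^n_\h(\tilde X) = \Omega^n(\tilde X)$ on the smooth $\tilde X$, the proper surjections $\tilde X \to X$ are $\h$-covers, and descent produces a canonical map $\Omega^n_\h(X) \to \Omega^n_{\rs}(X)$; comparing it with the descent presentation of $\Omega^n_\h(X)$ along the cofinal system of resolutions shows that it is an isomorphism.

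I expect the main obstacle to be the blow-up descent underlying Part~\ref{HJsmoothcohomology}: verifying that Kähler differentials satisfy $\cdh$-descent on the smooth site amounts, after Hironaka's factorisation, to controlling $R\pi_*\Omega^n_{\tilde X}$ for a blow-up of a smooth centre and checking exactness of the resulting descent triangle. Once this is established, Parts~\ref{HJvectorbundle}--\ref{HJKanextension} follow formally by combining it with resolution of singularities.
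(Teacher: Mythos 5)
The paper itself gives no proof of this statement: it is a summary of results quoted from \cite{HJ}, so the comparison below is with the arguments given there. Your overall strategy for \ref{HJvectorbundle}--\ref{HJsmoothcohomology} --- reduce to smooth schemes via Hironaka, establish descent for abstract blow-up squares using the structure of $R\pi_*\Omega^n_{\tilde X}$ for a blow-up in a smooth centre, then deduce coherence, torsion-freeness and the injectivity in \ref{HJtorsionfree} by induction on dimension and restriction to the smooth locus --- is essentially the route taken in \cite{HJ} for the $\cdh$- and $\eh$-statements.

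There are, however, two genuine gaps. First, for the $\h$-topology the content is \emph{not} exhausted by abstract blow-up squares: by Voevodsky's normal form, an $\h$-cover refines to a composite of an open cover, a \emph{finite surjective} morphism, and a proper birational morphism. Descent of $\Omega^n$ along finite surjective covers of smooth schemes (already for $x \mapsto x^2$ on $\bA^1$) is an additional input, proved in characteristic zero by a trace/Galois-averaging argument that divides by the degree of the cover. This is exactly the step that collapses in positive characteristic --- the Frobenius is an $\h$-cover killing all forms, cf.\ Lemma~\ref{lemm:hNoGood} --- so omitting it hides the one place where $\operatorname{char} k = 0$ is used beyond the mere existence of resolutions.

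Second, your proof of \ref{HJKanextension} rests on the claim that the resolutions $\tilde X \to X$ are cofinal in the index category of the right Kan extension, so that $\varprojlim_{Y \in \Reg(X)}\Omega^n(Y)$ is computed over resolutions alone. This is false: an object of $\Reg(X)$ is an \emph{arbitrary} smooth scheme mapping to $X$ --- for instance a point, or a smooth curve, landing in the singular locus --- and such an object admits no morphism over $X$ to a resolution, so the subcategory of resolutions is not initial. The limit over resolutions only sees $\bigcap_{\tilde X}\Omega^n(\tilde X)$ inside $\Omega^n(k(X))$ and forgets all compatibility conditions over $X_{\sing}$; that the full limit nevertheless agrees with $\Omega^n_{\h}(X)$ amounts to the existence of functorial pull-backs of $\h$-forms to every smooth $Y \to X$, which \cite{HJ} establish via the description by discrete valuation rings and the valuative criterion --- the same mechanism as Propositions~\ref{prop:altDefrs} and~\ref{lemm:isrh} of the present paper --- and not by a cofinality argument.
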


\subsubsection*{Failure of \ref*{HJvectorbundle} and \ref*{HJtorsionfree} in positive characteristic}
\approvals{
  Annette &yes \\
  Shane & yes \\
  Stefan & yes
}

In positive characteristic, the first obstacle to this program one discovers is
that $Ω_\h^n = 0$ for $n≥ 1$, cf.~Lemma~\ref{lemm:hNoGood}.  This is due to the
fact that the geometric Frobenius is an $\h$-cover, which induces the zero
morphism on differentials.  However, almost all of the results of \cite{HJ} are
already valid in the coarser $\cdh$-topology, and remain valid in positive
characteristic if one assumes that resolutions of singularities exist.  So let
us use the $\cdh$-topology in place of the $\h$-topology.  But even then,
\ref{HJvectorbundle} and \ref{HJtorsionfree} of Theorem~\ref{theo:HJ} seem to be
lost causes:

\begin{coro}[Corollary~\ref{coro:rs_not_torsionfree2}, Corollary~\ref{coro:torsionIsNotAPresheafForcdh}, Example~\ref{exam:pullbackTorsionNotTorsion}]
  For perfect fields of positive characteristic, there exist varieties $X$ such
  that the restriction of $Ω^1_{\cdh}$ to the small Zariski site of $X$ is not
  torsion-free.

  Moreover, there exist morphisms $Y → X$ and torsion elements of
  $Ω^1_{\cdh}(X)$ (resp.  $Ω^1(X)$) whose pull-back to $Ω^1_{\cdh}(Y)$ (resp.
  $Ω^1(Y)$) are not torsion.  \qed
\end{coro}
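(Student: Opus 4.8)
The three assertions are all statements that some good behaviour \emph{fails}, so I would prove them by exhibiting a single explicit counterexample in characteristic $p>0$ and reading off all three conclusions from it. The plan is to produce an integral hypersurface $X \subset \bA^3_k$ whose defining equation involves $p$-th powers, so that the exterior derivative of the relation loses a term and forces a Kähler differential to be torsion. A convenient candidate is $X = \Spec k[x,y,z]/(y^p - x^p z)$: it is a domain (the equation is primitive and linear in $z$), its singular locus is the $z$-axis $Z = V(x,y)$, and since $\mathrm{d}(y^p - x^p z) = -x^p\,\mathrm{d}z$ in characteristic $p$, the form $\mathrm{d}z \in Ω^1(X)$ satisfies $x^p\,\mathrm{d}z = 0$ and is therefore torsion, while still restricting to a nonzero generator of $Ω^1$ along $Z$.

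First I would dispose of the Kähler-differential (``resp.\ $Ω^1$'') half of the second assertion, which is elementary. Take $Y = Z = \bA^1_z \hookrightarrow X$, the inclusion of the singular $z$-axis. The torsion form $\mathrm{d}z$ restricts to the free generator $\mathrm{d}z$ of $Ω^1(\bA^1_z)$, which is not torsion; the point is simply that the element $x^p$ annihilating $\mathrm{d}z$ vanishes on the image of $Y$, so torsion is not preserved under this non-flat pullback.

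The substantial work is to upgrade this to $Ω^1_{\cdh}$ and to deduce that $Ω^1_{\cdh}$ is not torsion-free, since a priori cdh-sheafification could annihilate the Kähler torsion -- indeed $\mathrm{d}z$ already vanishes on the smooth locus $X \setminus Z$, where $z = (y/x)^p$. Here I would compute $Ω^1_{\cdh}(X)$ by cdh-descent along the abstract blow-up square associated to the normalization $\nu : \bA^2_{s,t} \to X$, $(s,t)\mapsto (s,\,st,\,t^p)$, with centre $Z$ and exceptional fibre $E = \nu^{-1}(Z) = V(s) = \bA^1_t$. As all corners of the square except $X$ are smooth, their $Ω^1_{\cdh}$ agree with $Ω^1$ by the smooth comparison of Theorem~\ref{1}, and the sheaf condition identifies $Ω^1_{\cdh}(X)$ with the pairs $(\alpha,\beta) \in Ω^1(\bA^2)\oplus Ω^1(Z)$ agreeing on $E$. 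The decisive observation is that the induced map $E = \bA^1_t \to Z = \bA^1_z$ is $z = t^p$, i.e.\ a Frobenius, hence kills every form on $Z$; thus $\beta$ is entirely unconstrained and the whole of $Ω^1(Z) = k[z]\,\mathrm{d}z$ embeds into $Ω^1_{\cdh}(X)$. Tracking the class of $\mathrm{d}z$ shows it maps to $(0,\mathrm{d}z)$, which is nonzero, and since $x$ restricts to $0$ on $Z$ one still has $x^p\cdot(0,\mathrm{d}z)=0$, so it remains torsion. This simultaneously yields a nonzero torsion section of $Ω^1_{\cdh}$ on the small Zariski site of $X$ (the first assertion), and, restricting $(0,\mathrm{d}z)$ along $Y = Z \hookrightarrow X$, a torsion class whose pullback is the non-torsion generator of $Ω^1_{\cdh}(Y) = Ω^1(Y)$ (the $Ω^1_{\cdh}$ half of the second assertion); the monomorphism $Ω^1_{\cdh} \hookrightarrow Ω^1_{\rs}$ of Theorem~\ref{1} then transports the torsion to $Ω^1_{\rs}$ as well.

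The main obstacle, and the step I would treat most carefully, is exactly this cdh-descent computation: one must (a) verify that the square is a genuine abstract blow-up square with reduced exceptional fibre, (b) justify replacing $Ω^1_{\cdh}$ by $Ω^1$ on the smooth corners, and (c) ensure the Mayer--Vietoris sequence is exact on the left, so that the computed element really is a global section of the cdh-sheaf rather than a cohomology class. The conceptual heart, by contrast, is the one-line phenomenon that in characteristic $p$ the map from the exceptional locus to the centre is a Frobenius and hence zero on differentials -- this is precisely what prevents cdh-sheafification from repairing the torsion, in sharp contrast to the characteristic-zero picture of Theorem~\ref{theo:HJ}.
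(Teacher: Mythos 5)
Your proposal is correct and follows essentially the same route as the paper: where you use the hypersurface $y^p = x^p z$ with its normalisation $\bA^2_{s,t}$, the paper uses the Whitney umbrella $y^2 = xz^2$ in characteristic two (Examples~\ref{exam:pullbackTorsionNotTorsion} and \ref{coro:rs_not_torsionfree1}) with the parametrisation $(u,z)\mapsto(u^2,uz,z)$, but the mechanism is identical. In both cases a $p$-th-power relation makes a Kähler form torsion while it restricts non-trivially to the singular locus, and cdh-descent along the square formed by the resolution and the singular locus --- whose exceptional curve maps to the centre by a Frobenius and hence kills all differentials --- injects the centre's forms as torsion sections of $Ω^1_{\cdh}$.
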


Note that functoriality of torsion forms over the complex numbers is true,
cf.~Theorem~\ref{thm:pb-torDiff-cplx}, \cite[Corollary~2.7]{MR3084424}.

\subsubsection*{Positive results}
\approvals{
  Annette & yes \\
  Shane & yes \\
  Stefan & yes }

On the positive side, Item~\ref{theo1:part:cdhregular} in Theorem~\ref{1} can be
seen as an analogue of Item~\ref{HJsmoothcohomology} in Theorem~\ref{theo:HJ}.
In particular, we can give an unconditional statement of the case $i = 0$.  In a
similar vein, Items~\ref{theo1:part:cdhrsmono} and \ref{theo1:part:cdhrsiso} of
Theorem~\ref{1} relate to Item~\ref{HJKanextension} in Theorem~\ref{theo:HJ}.

\subsection{Other results}
\approvals{
  Annette & yes \\
  Shane & yes \\
  Stefan & yes }

Many of the properties of $Ω^n_{\rs}$ hold for a more general class of
presheaves, namely unramified presheaves, introduced by Morel,
cf.~Definition~\ref{def:unr}.  The results mentioned above are based on the
following very general result which should be of independent interest.

\begin{prop}[Proposition~\ref{lemm:isrh}]
  Let $S$ be a Noetherian scheme.  If $\Fh$ is an unramified presheaf on
  $\Sch(S)$ then $\Fh_{\rs}$ is an $\rh$-sheaf.  In particular, if $\Fh$ is an
  unramified Nisnevich (resp.~étale) sheaf on $\Sch(S)$ then $\Fh_{\rs}$ is a
  $\cdh$-sheaf (resp.~$\eh$-sheaf).  \qed
\end{prop}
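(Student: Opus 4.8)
The plan is to establish the sheaf property through Voevodsky's cd-structure machinery rather than by manipulating $\rh$-covers by hand. Over a Noetherian base $S$ the $\rh$-topology on $\Sch(S)$ is generated by a complete cd-structure whose distinguished squares are the Zariski open-cover squares together with the abstract blow-up squares (the $\cdp$ cd-structure). By Voevodsky's descent criterion a presheaf is an $\rh$-sheaf as soon as it sends $\emptyset$ to the terminal object, converts finite disjoint unions into products, and takes every distinguished square to a Cartesian square. So the whole problem reduces to checking these pullback conditions for $\Fh_{\rs}$.

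The first real task is to make the right Kan extension computable. Writing
$$
\Fh_{\rs}(X) \;=\; \varprojlim_{\substack{U \in \Sm(S)\\ X \to U}} \Fh(U)
$$
for the defining limit over the comma category of maps to smooth $S$-schemes, I would use the unramified axioms for $\Fh$ to show that this limit is governed by the behaviour of $\Fh$ on discrete valuation rings -- which is exactly the point of the notation $\Fh_{\rs}$. Concretely, I want the conclusions that $\Fh_{\rs}$ is again separated (for integral $X$ it injects into its value at the generic point) and that a section is determined and glued from its restrictions to the $\mathcal{O}_{X,x}$ at codimension-one points $x$. Granting this dvr-description, the empty-scheme and disjoint-union conditions, together with Zariski descent, are formal consequences and I would treat them as routine.

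The heart of the argument -- and the step I expect to be the main obstacle -- is descent for an abstract blow-up square
$$
\begin{array}{ccc}
E & \into & \wtilde X\\
\downarrow & & \downarrow\\
Z & \into & X
\end{array}
$$
in which $\wtilde X \to X$ is proper and restricts to an isomorphism over the dense open $X \smallsetminus Z$. Injectivity of $\Fh_{\rs}(X) \to \Fh_{\rs}(\wtilde X) \times \Fh_{\rs}(Z)$ is the separatedness above applied to the surjection $\wtilde X \sqcup Z \onto X$. For exactness I would exploit that the modification is birational: $X$ and $\wtilde X$ have the same generic points, so a section on $\wtilde X$ already supplies the generic data and all codimension-one data away from $Z$, while the section on $Z$, matched with it along $E$, must be shown to furnish precisely the remaining dvr-data supported on $Z$. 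Packaging this valuation-theoretically -- in the spirit of the Riemann--Zariski space, which is insensitive to proper birational modification -- is what forces the two pieces to glue to a well-defined element of $\Fh_{\rs}(X)$, and it is exactly here that the unramified intersection axiom does the work.

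Finally, for the ``In particular'' clause I would upgrade the dvr-description to show that when $\Fh$ is moreover a Nisnevich (resp.\ étale) sheaf, $\Fh_{\rs}$ additionally sends Nisnevich (resp.\ étale) distinguished squares to Cartesian squares; this is plausible because such covers are unramified in codimension one, so the dvr-data transforms correctly. Together with the blow-up descent already proved, this makes $\Fh_{\rs}$ a sheaf for the cd-structure built from the Nisnevich (resp.\ étale) squares and the abstract blow-up squares. Since that cd-structure presents the $\cdh$-topology (resp.\ the $\eh$-topology), Voevodsky's criterion yields that $\Fh_{\rs}$ is a $\cdh$-sheaf (resp.\ an $\eh$-sheaf), as required.
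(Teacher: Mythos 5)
Your strategy is sound and, at the decisive point, coincides with the paper's own proof: both arguments run through a ``dvr description'' of the right Kan extension --- Proposition~\ref{prop:altDefrs}, which for unramified $\Fh$ identifies $\Fh_{\rs}(X)$ with $\varprojlim_{W\in\Cur(X)}\Fh\ess(W)$ --- and both then verify descent for the proper part of the topology by factoring every $W\in\Cur(X)$ through the cover and using the matching condition to see that the resulting section is independent of the chosen factorisation. Where you differ is only the formal wrapper: you invoke Voevodsky's criterion for a complete cd-structure and check Cartesianness of distinguished squares, whereas the paper first disposes of the Zariski/Nisnevich/étale part by the elementary Lemma~\ref{lemm:issheaf} (covers of regular schemes in these topologies are again regular, so the sheaf condition is inherited from $\Fh$), then reduces an arbitrary $\rh$-, $\cdh$- or $\eh$-cover to a $\cdp$-morphism followed by such a cover via the normal form of Corollary~\ref{cor:normalform}, and checks the equalizer condition for $\cdp$-morphisms directly. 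Your route is legitimate, at the cost of having to cite or prove completeness of the combined Zariski/proper, Nisnevich/proper and étale/proper cd-structures on Noetherian $\Sch(S)$.

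Two points need repair. First, your displayed formula for the right Kan extension is backwards: the limit is over the category of regular schemes $Y$ \emph{mapping to} $X$, not over maps $X\to U$ into smooth schemes; your subsequent discussion (restriction to $\Spec\,\sO_{X,x}$, generic points) shows you intend the former, but as written the formula defines a different functor. Second, the step you yourself flag as the main obstacle --- that sections on $\wtilde X$ and $Z$ agreeing on $E$ glue to an element of $\Fh_{\rs}(X)$ --- is where all the content lies, and ``packaging this valuation-theoretically in the spirit of the Riemann--Zariski space'' is not yet an argument. The mechanism is concrete and you should spell it out: every $W\in\Cur(X)$ factors through the cover (complete decomposedness supplies the factorisation for points, the valuative criterion for properness extends it over a whole dvr), any two factorisations of the same $W$ are compared through $E$ (resp.\ through $X'⨯_X X'$ in the equalizer formulation), and the agreement hypothesis is exactly what makes the assignment $W\mapsto s_W$ well defined and coherent, hence a section of the limit. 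Note also that abstract blow-up squares need not have $X\smallsetminus Z$ dense; the degenerate cases such as $Z=X$ (which is how $X_{\red}\to X$ enters) must be allowed for completeness of the cd-structure, though they cause no extra difficulty.
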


In our effort to avoid assuming resolution of singularities, we investigated the
possibility of a topology sitting between the $\cdh$- and $\h$-topologies which
might allow the theorems of de Jong or Gabber on alterations to be used in place
of resolution of singularities.  An example of the successful application of
such an idea is \cite{Kel12} where the $l$dh-topology is introduced and
successfully used as a replacement to the $\cdh$-topology.
Section~\ref{section:newTopologies} proposes a number of new, initially
promising sites, cf.~Definitions~\ref{defi:sdhtopology} and \ref{defi:saltSite},
but then also shows in Example~\ref{exam:cdhDecsentFailure} that, somewhat
surprisingly, the sheafification of $Ω^1$ on these sites does not preserve its
values on regular schemes, cf.~Proposition~\ref{prop:noSdhDescent} and
Lemma~\ref{lemm:nosaltdescent}.

\subsection{Outline of the paper}
\approvals{
  Annette & yes \\
  Shane & yes\\
  Stefan & yes
}

After fixing notation in Section~\ref{sec:02}, the paper begins in
Section~\ref{section:torsionFree} with a discussion of torsion- and torsion-free
differentials.  Section~\ref{sec:extension} contains a general discussion of the
relevant properties of unramified presheaves, whereas properties that are
specific to $Ω^1$ are collected in Section~\ref{section:rs-diff}.
Section~\ref{section:rs-diff} discusses our proposals for a good presheaf of
differentials on singular schemes---$Ω_{\cdh}^1$ and $Ω_{\rs}^1$---and their
properties.  Appendix~\ref{appA} gives the proof of the above mentioned result
on killing torsion forms by blow-up.  We also discuss a hyperplane section
criterion for testing the vanishing of torsion forms.

\subsection{Open problems}
\approvals{
  Annette & yes \\
  Shane & yes \\
  Stefan & yes
}

What is missing from this paper is a full $\cdh$-analogue of
Theorem~\ref{theo:HJ}, Item~\ref{HJsmoothcohomology}.  Assuming resolutions of
singularities, Geisser has shown \cite{Gei06} that the $\cdh$-cohomology of
$Ω^n_{\cdh}$ agrees with Zariski-cohomology of $Ω^n$ on all smooth varieties
$X$.  It remains open if this can be extended unconditionally to $Ω^n_{\cdh}$
and $Ω^n_{\rs}$.

In a similar vein, we do not know if the assumption on resolutions of
singularities can be removed from Item~\ref{theo1:part:cdhrsiso} of
Theorem~\ref{1}.

\subsection{Acknowledgements}
\approvals{
  Annette & yes \\
  Shane & yes \\
  Stefan & yes
}

The authors thank Daniel Greb for stimulating discussions about torsion-forms,
and Orlando Villamayor as well as Mark Spivakovsky for answering questions
concerning resolution of singularities.  We are most indebted to a referee of an
earlier version of this paper for pointing us to the work of Gabber-Ramero.

% !TEX root = sdh.tex
%
% Do not edit the following line.  The text is automatically updated by
% subversion.
%
\svnid{$Id: 02.tex 236 2015-02-20 13:00:44Z kebekus $}

\section{Notation and Conventions}
\label{sec:02}
\subversionInfo
\subsection{Global assumptions}
\approvals{
  Annette & yes \\
  Shane & yes \\
  Stefan & yes
}

Throughout the present paper, all schemes are assumed to be separated.  The
letter $S$ will always denote a fixed, separated, noetherian base scheme.

\subsection{Categories of schemes and presheaves}
\approvals{
  Annette & yes \\
  Shane & yes \\
  Stefan & yes
}

Denote by $\Sch(S)$ the category of separated schemes of finite type over $S$,
and let $\Reg(S)$ be the full subcategory of regular schemes in $\Sch(S)$.  If
$S$ is the spectrum of a field $k$, we also write $\Sch(k)$ and $\Reg(k)$.  If
$k$ is perfect, then $\Reg(k)$ is the category of smooth $k$-varieties, which
need not necessarily be connected.

\begin{nota}[Presheaf on $\Sch(S)$]
  Given a noetherian scheme $S$, a presheaf $\Fh$ of abelian groups on $\Sch(S)$
  is simply a contravariant functor
  $$
  \Sch(S) → \{\text{abelian groups}\}.
  $$
  A presheaf $\Fh$ is called a presheaf of $\sO$-modules, if every $\Fh(X)$ has
  an $\sO_X(X)$-module structure such that for every morphism $Y → X$ in
  $\Sch(S)$, the induced maps $\Fh(X) → \Fh(Y)$ are compatible with the
  morphisms of functions $\sO(X) → \sO(Y)$.
\end{nota}

We are particularly interested in the presheaf of Kähler differentials.

\begin{exam}[Structure sheaf, Kähler differentials]
  We denote by $\sO$ the presheaf $X \mapsto \sO_{X}(X)$.  Given any $n ∈ \bN^{≥
    0}$, we denote by $Ω^n$ the presheaf $X \mapsto Ω^n_{X/S}(X)$.  Note that
  $Ω^0 = \sO$.  We also abbreviate $Ω = Ω¹$.

  The notation $Ω^n_X$ means the usual Zariski-sheaf on $X$.  That is, $Ω^n_X =
  Ω^n_{\vphantom{X}}|^{\vphantom{n}}_{X_{\Zar}}$ where $X_{\Zar}$ is the usual
  topological space associated to the scheme $X$.
\end{exam}

\begin{defi}[Torsion]\label{defi:torsionF}
  Let $\Fh$ be a presheaf on $\Sch(S)$ and $X∈ \Sch(S)$.  We write $\tor \Fh(X)$
  for the set of those sections of $\Fh(X)$ which vanish on a dense open
  subscheme.
\end{defi}

\begin{warn}[Torsion groups might not form a presheaf]
  In the setting of Definition~\ref{defi:torsionF} note that the groups
  $\tor\Fh(X)$ do not necessarily have the structure of a presheaf on $\Sch(S)$!
  For a morphism $Y → X$ in $\Sch(S)$, the image of $\tor \Fh(X)$ under the
  morphism $\Fh(X) → \Fh(Y)$ does not necessarily lie in $\tor \Fh(Y)$.  A very
  simple example (pointed out to us by a referee) is the section $x$ on $\Spec
  k[x,y]/(x^2,y)$.  It is torsion, but its pull-back to $\Spec k k[x]/x^2$ is
  not.  Example~\vref{exam:pullbackTorsionNotTorsion} shows that the problem
  also happens for reduced varieties in positive characteristic in the case
  where $\Fh = Ω$.  On the positive side, note that the restriction of a torsion
  section to an open subscheme is again a torsion section.
\end{warn}

\subsection{Schemes and morphisms of essentially finite type}
\approvals{
  Annette & yes \\
  Shane & yes \\
  Stefan & yes
}

Parts of Section~\ref{sec:extension} use the notion of ``$S$-schemes that are
essentially of finite type''.  While this notion has been used at several places
in the literature, we were not able to find a convenient reference for its
definition.  We have therefore chosen to include a definition and a brief
discussion here.

\begin{defi}\label{defi:ess_ft}
  We say that an $S$-scheme $X'$ is \emph{essentially of finite type} over $S$
  if there is a scheme $X$ of finite type over $S$ and a filtred inverse system
  $\{U_i\}_{i∈ I}$ of open subschemes of $X$ with affine transition maps such
  that $X' = \bigcap_{i∈ I}U_i$.
\end{defi}

\begin{lemm}[Morphisms of schemes essentially of finite type]\label{lem:mor_ft}
  Let $X$ and $Y$ be in $\Sch(S)$.  Let $\{U_i\}_{i∈ I}$ be a filtred inverse
  system of open subschemes of $X$ with affine transitions maps with
  intersection $X'$ and $\{V_j\}_{j∈ J}$ a filtred inverse system of open
  subschemes of $Y$ with affine transition maps with intersection $Y'$.  Then
  \[
  \Mor_S(X',Y') =\varprojlim_j\varinjlim_i\Mor_S(U_i,V_j)\ .
  \]
\end{lemm}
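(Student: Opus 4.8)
The plan is to separate the two inverse systems and to treat the target $Y'$ and the source $X'$ by quite different means. No interchange of the limit and colimit is needed, since the formula already presents them in the order $\varprojlim_j \varinjlim_i$; the outer limit will come from the target side and the inner colimit from the source side.

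First I would dispose of the target side using only the universal property of the inverse limit. Because the transition maps $V_{j'} \to V_j$ are open immersions, the intersection $Y' = \bigcap_j V_j$, viewed as an open subscheme of $Y$, is the inverse limit $\varprojlim_j V_j$ in the category of $S$-schemes: a compatible family of $S$-morphisms $Z \to V_j$ is the same datum as an $S$-morphism $Z \to Y$ whose image lies topologically in $\bigcap_j |V_j| = |Y'|$, hence as an $S$-morphism $Z \to Y'$. Applying this with $Z = X'$ yields
\[
\Mor_S(X', Y') = \varprojlim_j \Mor_S(X', V_j),
\]
so it remains only to identify each $\Mor_S(X', V_j)$ with the filtered colimit $\varinjlim_i \Mor_S(U_i, V_j)$.

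The substantive step is therefore the source side: for a fixed $j$, I claim $\Mor_S(X', V_j) = \varinjlim_i \Mor_S(U_i, V_j)$. This is an instance of the classical fact that a finitely presented $S$-scheme represents a functor that sends a cofiltered limit of quasi-compact and quasi-separated $S$-schemes along affine transition maps to the corresponding filtered colimit (the limit results of Grothendieck, EGA IV, §8, in particular Théorème 8.8.2; see also the chapter on limits of schemes in the Stacks Project). The hypotheses hold here: each $U_i$ is open in $X$, which is of finite type over the noetherian scheme $S$, hence $U_i$ is noetherian and in particular quasi-compact and quasi-separated; the transition maps are affine by assumption; and $V_j$, being open in $Y \in \Sch(S)$, is of finite type over $S$ and therefore — crucially, because $S$ is noetherian — of finite presentation over $S$. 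To invoke EGA IV 8.8.2 literally, whose formulation is phrased for a projective system of base schemes together with schemes obtained from them by base change, I would take the system of bases to be our own $\{U_i\}_{i \geq i_0}$ (cofinal for any fixed index $i_0$), with limit $X'$; setting the relevant finitely presented $U_{i_0}$-schemes to be $U_{i_0}$ itself and $V_j \times_S U_{i_0}$, base change along $U_i \to U_{i_0}$ recovers $U_i$ and $V_j \times_S U_i$, and a section of $V_j \times_S U_i \to U_i$ is exactly an element of $\Mor_S(U_i, V_j)$, and likewise over $X'$. The cited theorem then yields precisely $\varinjlim_i \Mor_S(U_i, V_j) = \Mor_S(X', V_j)$, and combining this with the target-side identity gives the claim. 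The main obstacle is this source-side step: both checking that the finite-presentation hypothesis genuinely holds (which is where the noetherian assumption on $S$ enters) and recasting the data into the base-change form demanded by the cited limit theorem.
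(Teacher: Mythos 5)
Your argument is correct and follows essentially the same route as the paper, which simply quotes the limit formalism of EGA~IV, \S 8 (the paper cites Corollaire~8.13.2 directly, whereas you reduce to Th\'eor\`eme~8.8.2 by first peeling off the target-side inverse limit via the universal property of $Y'=\varprojlim_j V_j$ and then handling the source side, with the same noetherian-implies-finitely-presented observation doing the real work). The extra care you take in recasting $\Mor_S(U_i,V_j)$ as sections of $V_j \times_S U_i \to U_i$ over the base system $\{U_i\}$ is a legitimate and complete way to match the hypotheses of the cited theorem.
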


\begin{rema}\label{rem:lem:mor_ft}
  Using the language of pro-categories, briefly recalled in
  Section~\ref{ssec:dvr}, Lemma~\ref{lem:mor_ft} asserts that the category of
  schemes essentially of finite type over $S$ is a full subcategory of the
  pro-category of $\Sch(S)$.
\end{rema}

\begin{proof}[Proof of Lemma~\ref{lem:mor_ft}]
  This is just a special case of \cite[Corollaire~8.13.2]{EGAIV3}.  The key
  point of the argument is that a morphism with source Spec of the local ring of
  a variety always extends to an open neighbourhood.
\end{proof}

\begin{exam}\label{ex:cofinal}
  Let $X ∈ \Sch(S)$ and $\mathfrak{U}=\{U_i\}_{i∈ I}$ be a filtred inverse
  system of open affine subschemes of $X$ and with intersection $X'$.  Let $U$
  be an open affine neighbourhood of $X'$ in $X$.  By Lemma~\ref{lem:mor_ft}, we
  have
  \[
  \Mor_X(X',U)=\varinjlim_i\Mor_X(U_i,U).
  \]
  Hence the inclusion $X'→ U$ factors via some $U_i$.  This means that
  $\mathfrak{U}$ is cofinal in the system of all affine open neighbourhoods of
  $X'$.
\end{exam}

\subsection{Topologies}
\approvals{
  Annette & yes \\
  Shane & yes \\
  Stefan & yes
}

We are going to use various topologies on $\Sch(S)$, which we want to introduce
now.  They are variants of the $\h$-topology introduced by Voevodsky in
\cite{Voev96}.  Recall that a Grothendieck topology on $\Sch(S)$ is defined by
specifying for each $X ∈ \Sch(S)$ which collections $\{U_i→ X\}_{i∈ I}$ of
$S$-morphisms should be considered as open covers.  By definition, a presheaf
$\Fh$ is a sheaf if for any such collection, $\Fh(X)$ is equal to the set of
those elements $(s_i)_{i ∈ I}$ in $\prod_{i ∈ I}\Fh(U_i)$ for which $s_i|_{U_i
  {⨯_X} U_j} = s_j|_{U_i {⨯_X} U_j}$ for every $i, j ∈ I$.

We refer to the ordinary topology as the Zariski topology.

\begin{defi}[$\cdp$-morphism]\label{defi:cdp}
  A morphism $f : Y → X$ is called a \emph{$\cdp$-morphism} if it is proper and
  completely decomposed, where by ``completely decomposed'' we mean that for
  every point $x ∈ X$ there is a point $y ∈ Y$ with $f(y) = x$ and $[k(y): k(x)]
  = 1$.
\end{defi}

These morphisms are also referred to as proper \emph{$\cdh$-covers}, or
\emph{envelopes} in the literature.

\begin{rema}[$\rh$, $\cdh$ and $\eh$-topologies]\label{rem:defi:rhetc}
  Recall that the $\rh$-topology on $\Sch(S)$ is generated by the Zariski
  topology and $\cdp$-morphisms, \cite{GL}.  In a similar vein, the
  $\cdh$-topology is generated by the Nisnevich topology and $\cdp$-morphisms,
  \cite[§~5]{SV00}.  The $\eh$-topology is generated by the étale topology and
  $\cdp$-morphisms, \cite{Gei06}.
\end{rema}

We are going to need the following fact from algebraic geometry.

\begin{lemm}\label{lem:dominateByR}
  Let $X$ be a regular, noetherian scheme.  Let $x ∈ X$ be a point of
  codimension $n$.  Then there is point $y∈ X$ of codimension $n-1$, a discrete
  valuation ring $R$ essentially of finite type over $X$ together with a map
  $\Spec\, R → X$ such that the special point of $\Spec\, R$ maps to $x$ and the
  generic point to $y$, both inducing isomorphisms on their respective residue
  fields.
\end{lemm}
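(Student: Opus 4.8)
The plan is to realise $R$ as an explicit regular quotient of the local ring $A := \sO_{X,x}$, which is regular of dimension $n$ since $X$ is regular. (Note that the statement presupposes $n \geq 1$, as a point of codimension $-1$ does not exist.) First I would choose a regular system of parameters $t_1, \dots, t_n$ of $A$, that is, elements of the maximal ideal $\mathfrak{m}$ whose residues form a basis of $\mathfrak{m}/\mathfrak{m}^2$, and set $\mathfrak{p} := (t_1, \dots, t_{n-1})$. By the standard structure theory of regular local rings, any initial segment of a regular system of parameters is a regular sequence generating a prime ideal whose quotient is again regular; applied here this shows that $\mathfrak{p}$ is prime of height $n-1$ and that $R := A/\mathfrak{p}$ is a regular local ring of dimension one, hence a discrete valuation ring, with residue field $A/\mathfrak{m} = k(x)$ and fraction field $k(\mathfrak{p})$.

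Next I would take $y \in X$ to be the point corresponding to $\mathfrak{p}$ under the canonical morphism $\Spec A = \Spec \sO_{X,x} \to X$, whose image consists precisely of the generisations of $x$. Since $y$ generises $x$ we have $\sO_{X,y} = A_{\mathfrak{p}}$, so $\codim y = \dim A_{\mathfrak{p}} = \operatorname{ht}\mathfrak{p} = n-1$, as required. The desired map is then the composite $\Spec R = \Spec(A/\mathfrak{p}) \into \Spec A \to X$. Its special point is $\mathfrak{m}$, which maps to $x$, and the induced map on residue fields $k(x) \to R/\mathfrak{m}_R = A/\mathfrak{m}$ is the localisation isomorphism; its generic point is $\mathfrak{p}$, which maps to $y$, and the induced map $k(y) \to \Frac R = k(\mathfrak{p})$ is likewise an isomorphism.

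It remains to see that $R$ is essentially of finite type over $X$. For this I would pick an affine open neighbourhood $U = \Spec B$ of $x$ small enough that the $t_i$ are the images of elements $\tilde t_1, \dots, \tilde t_{n-1} \in B$, and form the closed subscheme $W := V(\tilde t_1, \dots, \tilde t_{n-1}) \subseteq U$, which is of finite type over $X$. Comparing localisations gives $\sO_{W,x} = (B/(\tilde t_1, \dots, \tilde t_{n-1}))_{\mathfrak{m}_x} = A/\mathfrak{p} = R$, so $\Spec R$ is the local scheme of $W$ at $x$; by Example~\ref{ex:cofinal} it is the filtred intersection of the affine open neighbourhoods of $x$ in $W$, and is therefore essentially of finite type over $X$ in the sense of Definition~\ref{defi:ess_ft}.

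The only genuinely nontrivial input is the structural fact that an initial segment of a regular system of parameters generates a prime ideal whose quotient is again regular --- and hence, in the codimension-one case at hand, a discrete valuation ring; the remaining steps are routine bookkeeping with localisations and residue fields.
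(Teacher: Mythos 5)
Your proposal is correct and follows essentially the same route as the paper's own proof: both take a regular system of parameters of $\sO_{X,x}$, quotient by the first $n-1$ elements to obtain a one-dimensional regular local ring (hence a discrete valuation ring), and let $y$ be the image of its generic point. Your additional verifications of the residue-field isomorphisms and of the ``essentially of finite type'' condition are accurate elaborations of details the paper leaves implicit.
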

\begin{proof}
  The local ring $\sO_{X, x}$ is a regular local ring, and as such admits a
  regular sequence $f_1, \dots, f_n$ generating its maximal ideal.  The quotient
  ring
  $$
  R := \factor{\sO_{X, x}}{\langle f_1, \dots, f_{n - 1}\rangle}
  $$
  is then a regular local ring of dimension one, that is, a discrete valuation
  ring, \cite[Theorems~36(3) and 17.G]{Mat}.  Let $y$ be the image of the
  generic point of $\Spec\, R$.  By construction, this is a point of codimension
  $n-1$.
\end{proof}

\begin{propo}[Birational-- and $\cdp$-morphisms]\label{prop:cdpRefinable}
  Suppose that $X$ is a regular, noetherian scheme.  Then, every proper,
  birational morphism is a $\cdp$-morphism, and every $\cdp$-morphism is
  refinable by a proper, birational morphism.
\end{propo}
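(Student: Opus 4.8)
The plan is to treat the two assertions separately, in each case reducing to the integral case: a regular noetherian scheme is a finite disjoint union of integral regular schemes, so it suffices to argue on connected components. Throughout I write $\xi$ for a generic point. For the first assertion, let $f\colon Y\to X$ be proper and birational with $X$ integral and regular. Properness is preserved, so the only issue is that $f$ is completely decomposed. I would prove this by induction on the codimension $n$ of a point $x\in X$, the claim being that there exists $y\in Y$ with $f(y)=x$ and $[k(y):k(x)]=1$. The base case $n=0$ is immediate: birationality provides a dense open $U\subseteq X$ over which $f$ is an isomorphism, and $\xi\in U$, so the generic point lifts with trivial residue extension. For the inductive step I invoke Lemma~\ref{lem:dominateByR} to produce a discrete valuation ring $R$, essentially of finite type over $X$, with a map $\Spec R\to X$ sending the closed point to $x$ and the generic point to a point $y_0$ of codimension $n-1$, both inducing isomorphisms on residue fields. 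The inductive hypothesis yields $\eta\in Y$ over $y_0$ with $k(\eta)\cong k(y_0)\cong\Frac R$; identifying residue fields compatibly, this is exactly a $\Frac R$-point of $Y$ lying over the $\Frac R$-point of $X$ given by the generic point of $\Spec R$.

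Now I apply the valuative criterion of properness to $f$: the square formed by $\Spec\Frac R\to Y$ and $\Spec R\to X$ admits a unique lift $s\colon\Spec R\to Y$, and setting $y:=s(\mathfrak{m}_R)$ gives $f(y)=x$ by commutativity. To close the induction I must check $[k(y):k(x)]=1$. The extension $k(x)\hookrightarrow k(y)$ arising from $f(y)=x$ admits a retraction, namely the residue-field map $k(y)\to R/\mathfrak{m}_R\cong k(x)$ induced by $s$ at the closed point; the composite $k(x)\to k(y)\to k(x)$ is an isomorphism because $f\circ s$ is the structure map $\Spec R\to X$, which induces an isomorphism on the residue field at $x$ by Lemma~\ref{lem:dominateByR}. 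A field homomorphism with a left inverse is an isomorphism, so $[k(y):k(x)]=1$, completing the induction and hence the first assertion. I expect this residue-field bookkeeping to be the main obstacle: the point is to arrange that the lift produced by the valuative criterion genuinely has trivial residue extension over $x$, rather than merely providing some preimage, and this is what forces the inductive descent along the chain of codimensions supplied by Lemma~\ref{lem:dominateByR}.

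For the second assertion, let $f\colon Y\to X$ be a $\cdp$-morphism with $X$ integral and regular, hence reduced. Complete decomposition at $\xi$ gives $\eta\in Y$ with $f(\eta)=\xi$ and $k(\eta)\cong k(\xi)$. I let $Y'\subseteq Y$ be the reduced closure of $\{\eta\}$, which is integral and maps to $X$ through the closed immersion $Y'\hookrightarrow Y$. Then $g\colon Y'\to X$ is proper, being the composite of a closed immersion with the proper morphism $f$, and it is birational because it induces the isomorphism $k(\eta)\cong k(\xi)$ on function fields; a finite-type birational morphism of integral noetherian schemes is an isomorphism over a dense open of $X$. Since $g$ factors through $f$ by construction, the proper birational morphism $g$ refines the $\cdp$-cover $\{Y\to X\}$, as required. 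For a general regular $X$ I would carry out this construction over each generic point and take $Y'$ to be the union of the resulting closures. \qed
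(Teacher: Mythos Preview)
Your proof is correct and follows the same approach as the paper's: induction on codimension via Lemma~\ref{lem:dominateByR} and the valuative criterion for the first assertion, and taking the closure of a generic-point lift for the second. The paper packages the residue-field check you spell out by phrasing the induction hypothesis as ``the inclusion $x \to X$ factors through $Y$'' (which is equivalent to the existence of a preimage with trivial residue extension), but the content is identical.
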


\begin{rema}
  This fact is well-known over a field of characteristic zero and is usually
  proven using strong resolution of singularities.  That is, by refining a
  proper birational morphism by a sequence of blow-ups with smooth centres.  By
  contrast, the proof below works for \emph{any} regular, noetherian scheme $X$,
  without restriction on a potential base scheme, or structural morphism.
\end{rema}

\begin{proof}[Proof of Proposition~\ref{prop:cdpRefinable}]
  With $X$ as in Proposition~\ref{prop:cdpRefinable}, we show the two statements
  separately.

  \subsubsection*{Step 1: Birational morphisms are $\cdp$}

  Let $Y → X$ be proper and birational.  We must show that for every point $x ∈
  X$ the canonical inclusion admits a factorisation $x → Y → X$.  We proceed by
  induction on the codimension.  In codimension zero, the factorisation is a
  consequence of birationality.  Suppose that it is true up to codimension $n -
  1$ and let $x$ be a point of codimension $n$.  By Lemma~\ref{lem:dominateByR}
  we can find a discrete valuation ring $R$ and a diagram
  $$
  \xymatrix@R=0pt{
    y \ar[dr] & \\
    & \Spec\, R \ar[r] & X \\
    x \ar[ur]
  }
  $$
  for some $y$ of codimension $n - 1$.  By the inductive hypothesis, the
  inclusion of $y$ into $X$ admits a factorisation through $Y$, so we have a
  commutative diagram
  $$
  \xymatrix{
    y \ar[r] \ar[d] & Y \ar[d] \\
    \Spec\, R \ar[r] & X
  }
  $$
  and now the valuative criterion for properness implies that the inclusion of
  $\Spec\, R$ into $X$ factors trough $Y$, and therefore so does the inclusion
  of $x$.

  \subsubsection*{Step 2: $\cdp$-morphisms are refinable}
  If $Y → X$ is a proper, completely decomposed morphism with $X$ connected
  (hence irreducible), choose a factorisation $η → Y → X$ of the inclusion of
  the generic point $η$ of $X$.  Then, the closure of the image of $η$ in $Y$ is
  birational and proper over $X$.
\end{proof}

\begin{lemm}\label{lem:makecover}
  Let $Y ∈ \Sch(S)$ be an integral scheme and let $\{U_i → Y\}_{i∈ I}$ be an étale
  cover of $Y$ by finitely many integral schemes.  Assume further that for each
  for $i ∈ I$ we are given a proper, birational morphism $T_i→ U_i$.  Then there
  exists a proper, birational morphism $Y'→ Y$, an étale cover $\{U'_i → Y'\}_{i
    ∈ I}$ and for each $i ∈ I$ a commutative diagram of the following form,
  $$
  \xymatrix{ %
    U'_i \ar[rrrr]^{\text{étale}} \ar[d]_{\text{proper, biratl.}} &&&& Y' \ar[d]^{\text{proper, biratl.}} \\
    T_i \ar[rr]_{\text{proper, biratl.}} && U_i \ar[rr]_{\text{étale}} && Y
  }
  $$
  If $\{U_i→ Y\}_{i∈ I}$ is a Nisnevich-- or a Zariski-cover, then so is the
  cover $\{U'_i → Y'\}_{i∈ I}$.
\end{lemm}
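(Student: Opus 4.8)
The plan is to reduce the statement to a question about blow-ups and then to descend a single blow-up centre from the étale cover down to $Y$. First I would commit to the choice $U'_i := Y' \times_Y U_i$ for whatever proper birational $Y' \to Y$ we end up constructing. With this choice the final assertion is automatic: the morphism $U'_i \to Y'$ is the base change of the étale morphism $U_i \to Y$, hence étale, and being an open immersion (Zariski case) resp. a completely decomposed étale map (Nisnevich case) is stable under base change, while surjectivity of $\{U'_i \to Y'\}$ follows from surjectivity of $Y' \to Y$ and of $\{U_i \to Y\}$. Likewise, if $Y' \to Y$ is proper and birational, then so is its base change $U'_i \to U_i$ along the flat morphism $U_i \to Y$: properness is stable under base change, and birationality holds because $Y' \to Y$ is an isomorphism over the generic point of $Y$, so $U'_i \to U_i$ is an isomorphism over the generic point of the integral scheme $U_i$. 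Thus everything reduces to producing one proper birational $Y' \to Y$ such that, for each $i$, the projection $Y' \times_Y U_i \to U_i$ factors through $T_i \to U_i$; the resulting $U'_i \to T_i$ is then automatically proper and birational.

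Second, I would reduce to the case that each $T_i \to U_i$ is a blow-up. Every proper birational morphism of noetherian schemes is dominated by a blow-up of $U_i$ in a nonzero coherent ideal $\mathcal{I}_i$, say $\mathrm{Bl}_{\mathcal{I}_i}(U_i) \to T_i \to U_i$; since a factorisation of $Y' \times_Y U_i \to U_i$ through $\mathrm{Bl}_{\mathcal{I}_i}(U_i)$ a fortiori yields one through $T_i$, we may replace $T_i$ by $\mathrm{Bl}_{\mathcal{I}_i}(U_i)$. By the universal property of blow-up, the desired factorisation is now equivalent to the statement that $\mathcal{I}_i \cdot \mathcal{O}_{Y' \times_Y U_i}$ is invertible.

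The heart of the matter is therefore to descend the centres $\mathcal{I}_i$ to a single ideal on $Y$, which I would do simultaneously for all $i$. Write $U := \coprod_i U_i$, an étale $Y$-scheme with $f : U \to Y$ surjective, and let $\mathcal{I} \subseteq \mathcal{O}_U$ be the ideal with $\mathcal{I}|_{U_i} = \mathcal{I}_i$. By Zariski's Main Theorem $f$ factors as an open immersion $U \hookrightarrow \bar U$ followed by a finite morphism $\bar f : \bar U \to Y$, and since each $U_i$ dominates the integral scheme $Y$ we may take $U$ dense in $\bar U$; extend $\mathcal{I}$ to a nonzero coherent ideal $\bar{\mathcal{I}} \subseteq \mathcal{O}_{\bar U}$. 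Passing to a Galois closure of the (separable) generic extension and normalising, one obtains a finite morphism $\pi : \tilde Y \to Y$ with a Galois group $G$ acting on $\tilde Y$; the $G$-invariant ideal $\tilde{\mathcal{J}} := \prod_{g \in G} g^\ast\bigl(\bar{\mathcal{I}}\,\mathcal{O}_{\tilde Y}\bigr)$ descends to a nonzero ideal $\mathcal{J} \subseteq \mathcal{O}_Y$ (after replacing $Y$ by its normalisation, a proper birational modification absorbed into $Y'$). I would then set $Y' := \mathrm{Bl}_{\mathcal{J}}(Y)$, proper and birational since $\mathcal{J} \neq 0$. To verify the factorisation, observe that after the faithfully flat base change $\tilde Y \times_Y U_i \to U_i$ the pull-back of $\mathcal{J}$ acquires $\mathcal{I}_i$ as a factor (the factor indexed by the embedding corresponding to $U_i$); hence the blow-up of $\mathcal{J}\,\mathcal{O}$ dominates that of $\mathcal{I}_i$ there, so $\mathcal{I}_i\cdot\mathcal{O}$ becomes invertible upstairs, and invertibility descends along the faithfully flat map to $Y' \times_Y U_i = \mathrm{Bl}_{\mathcal{J}\mathcal{O}_{U_i}}(U_i)$.

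The main obstacle is precisely this last descent step: honestly producing, on the given $Y$ and without invoking resolution of singularities, a single ideal whose pull-back to each sheet $U_i$ is \emph{divisible} by $\mathcal{I}_i$. Multiplying together the Galois conjugates is the device that forces divisibility, and hence the universal-property factorisation, but it requires passage to a Galois closure and to normalisations; the delicate points are therefore the finiteness of these normalisations --- unproblematic when $Y$ is excellent, in particular for varieties over a perfect field --- together with checking that the conjugate-product ideal genuinely descends through $\pi$ and that invertibility may be tested after the faithfully flat base change. Everything else --- the reduction to blow-ups, the choice $U'_i = Y' \times_Y U_i$, and the preservation of the cover type --- is formal.
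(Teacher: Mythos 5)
Your opening reductions are sound and in fact coincide with the paper's: the paper also takes $U'_i := U_i \times_Y Y'$, and your verification that the cover type is preserved and that $U'_i \to U_i$ is proper and birational is fine. The reduction to the case where each $T_i \to U_i$ is a blow-up is also legitimate (though the standard proof that every modification of an integral noetherian scheme is dominated by a blow-up itself rests on Raynaud--Gruson flattening, the tool the paper applies directly). The genuine gap is in your central descent step, and it sits exactly where you say the ``heart of the matter'' is. First, $\widetilde{Y} \times_Y U_i \to U_i$ is the base change of the normalised Galois closure $\widetilde{Y} \to Y$, which is finite and surjective but essentially never flat when $Y$ is singular; it is \emph{not} a faithfully flat base change, so you can neither compute $\sJ\cdot\sO_{\widetilde{Y}}$ as the conjugate product nor descend invertibility of $\sI_i\cdot\sO$ back down to $Y'\times_Y U_i$ this way. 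A $G$-invariant ideal on $\widetilde{Y}$ does not descend along the non-flat quotient $\widetilde{Y} \to \widetilde{Y}/G$: whatever you take on $Y$ (the contraction, the norm ideal, the invariants), you only obtain an \emph{inclusion} $\sJ\cdot\sO_{\widetilde{Y}} \subseteq \prod_g g^*(\,\overline{\sI}\sO_{\widetilde{Y}})$, and inclusion is useless for the universal property of blow-ups --- blowing up $(x^2,y)$ does not make $(x,y)$ invertible. What you need is divisibility of $\sJ\cdot\sO_{U_i}$ by $\sI_i$ \emph{on $U_i$ itself}, and nothing in the construction delivers that. Second, the finiteness of $\widetilde{Y}\to Y$ requires $Y$ to be Nagata; you flag this, but the lemma is stated and used (in Corollary~\ref{cor:normalform}) over an arbitrary noetherian base $S$.

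The paper's route avoids all of this: apply flattening by blow-up, \cite[Th\'eor\`eme~5.2.2]{RG}, to the compositions $T_i \to U_i \to Y$ (which are flat over a dense open of $Y$) to obtain a single proper birational $Y' \to Y$ over which all strict transforms $T'_i$ are flat. Each $T'_i \to Y'$ factors through $U'_i = U_i\times_Y Y'$; since $T'_i \to Y'$ is flat and $U'_i \to Y'$ is unramified, the morphism $T'_i \to U'_i$ is flat, and a flat, proper, birational morphism of integral schemes is an isomorphism. This gives the factorisation $U'_i \cong T'_i \to T_i$ directly, with no normalisations, no Galois closures, and no excellence hypotheses. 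If you want to salvage the norm-of-conjugates strategy you would have to carry out the divisibility bookkeeping at the level of valuations and restrict to excellent bases; the flattening argument is both shorter and valid in the stated generality.
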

\begin{proof}
  It follows from flattening by blow-up, \cite[Théorème~5.2.2]{RG}, that there
  exists an integral scheme $Y'$ and a proper birational morphism $Y' → Y$ such
  that for any $i ∈ $, the strict transforms $T'_i → Y'$ of the $T_i → Y$ are flat.
  These morphisms factor via the pullbacks $U'_i = U_i ⨯_Y Y'$ of the $U_i$,%
  $$
  T'_i → U'_i → Y.
  $$
  Since $\{U_i → Y\}_{i ∈ I}$ is an étale (resp.\ Zariski, Nisnevich) cover, so
  is $\{U'_i → Y'\}_{i ∈ I}$.

  It remains to show that $T'_i \cong U'_i$.  The scheme $U'_i$ is integral
  because it is proper and birational over $U_i$.  The scheme $T'_i$ is integral
  because it is proper and birational over $T_i$.  As $T'_i → Y'$ is flat and
  $U'_i→ Y'$ is unramified, the morphism $T'_i → U'_i$ is flat by
  \cite[Proposition~17.7.10]{EGAIV4}.  We now have a flat, proper and birational
  morphism between integral schemes, hence an isomorphism.  In detail: by
  flatness, the morphism $T'_i → U'_i$ has constant fibre dimension, which, by
  birationality, equals zero.  This means that the morphism is quasi-finite.  As
  it is also proper, this means that it is finite and still flat.  It follows
  that $\sO_{T'_i}$ is a locally free $\sO_{U'_i}$-module of constant rank.  The
  rank is one because again the morphism is birational.
\end{proof}

\begin{coro}[Normal form]\label{cor:normalform}
  Let $Y ∈ \Sch(S)$, and let $\{Y_i → Y\}_{i ∈ I}$ be an $\rh$-, $\cdh$- or
  $\eh$-cover, respectively.  Then there exists a refinement of the following
  form,
  $$
  \{ Y'_i \longrightarrow Y' \xrightarrow{\text{$\cdp$-covering}} Y \}_{i ∈ I},
  $$
  where $\{Y'_i → Y'\}_{i∈ I}$ is a Zariski-, Nisnevich- or étale cover,
  respectively, with $I$ finite.  If $Y$ is regular, then we can even assume
  that $Y' → Y$ is a proper birational morphism.
\end{coro}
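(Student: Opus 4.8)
The plan is to reduce an arbitrary cover to a composite of the two generating classes of covers, and then to push all the $\cdp$-covers to the bottom, directly above $Y$, by repeatedly commuting them past the topological covers; here and below I call a Zariski-, Nisnevich- or étale cover (according to the three cases) a \emph{topological cover}. By Remark~\ref{rem:defi:rhetc} the $\rh$- (resp.\ $\cdh$-, $\eh$-) topology is generated by the topological covers together with the $\cdp$-morphisms. Since $Y$ is noetherian, hence quasi-compact, the given family $\{Y_i → Y\}$ is therefore refined by a \emph{finite} composite
$$
\cdots \xrightarrow{\ \text{top.}\ } \bullet \xrightarrow{\ \cdp\ } \bullet \xrightarrow{\ \text{top.}\ } Y
$$
in which each step is a topological cover or a $\cdp$-cover. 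As topological covers are closed under composition, and likewise $\cdp$-morphisms (being proper and completely decomposed), consecutive steps of the same type may be merged; it therefore suffices to refine a two-step composite of the shape ``topological, then $\cdp$'' by one of the shape ``$\cdp$, then topological'', since repeating this move on the innermost such pair strictly decreases the number of $\cdp$-layers.

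To carry out this commutation I would first reduce to integral schemes. The map $\coprod_r (Y_r)_{\red} → Y$ from the disjoint union of the reduced irreducible components of $Y$ is finite, surjective and completely decomposed, hence a $\cdp$-morphism, so it is enough to work over an integral base. With the base integral, the argument of Step~2 in the proof of Proposition~\ref{prop:cdpRefinable} — which uses only irreducibility and not regularity — shows that every $\cdp$-morphism is refined by a proper birational one, obtained as the closure of a suitable lift of the generic point. This is exactly the data fed into Lemma~\ref{lem:makecover}: for a topological cover $\{U_i → Y\}$ by integral schemes and proper birational morphisms $T_i → U_i$, the lemma produces a proper birational $Y' → Y$ and a topological cover $\{U'_i → Y'\}$ of the same type refining $\{T_i → U_i → Y\}$. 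This is precisely the desired commutation, and iterating it over the successive $\cdp$-layers yields a refinement $\{Y'_i → Y' → Y\}$ with $Y' → Y$ proper birational and $\{Y'_i → Y'\}$ topological, with finite index set.

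If $Y$ is regular we are done: by Step~1 of Proposition~\ref{prop:cdpRefinable} the proper birational morphism $Y' → Y$ is then a $\cdp$-morphism, so the refinement just constructed is the asserted normal form, even with $Y' → Y$ proper birational. The main obstacle is the general case, where a proper birational morphism need not be completely decomposed — a closed point of $Y$ may acquire its residue field only after a field extension upstairs — so $Y' → Y$ need not be a $\cdp$-covering. I would resolve this by an additional induction on $\dim Y$ (the reduction above lets me assume $Y$ integral). The modification $Y' → Y$ is an isomorphism over a dense open $W ⊆ Y$; put $Z := Y \setminus W$ with its reduced structure, so that $\dim Z < \dim Y$. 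By the inductive hypothesis the restriction of the original cover to $Z$ has a normal form $\{Z'_i → Z' \xrightarrow{\cdp} Z\}$. Then $\{Y' → Y,\ Z' → Z \hookrightarrow Y\}$ is proper and completely decomposed — points of $W$ lift through the isomorphism over $W$, points of $Z$ lift through the $\cdp$-cover $Z' → Z$ — so $Y'' := Y' \sqcup Z' → Y$ is a $\cdp$-covering, and the topological covers over $Y'$ and over $Z'$ assemble into a topological cover of $Y''$ refining the original family. This is the required normal form. The delicate points, and where I expect to spend the most care, are the integrality and finiteness bookkeeping needed to apply Lemma~\ref{lem:makecover} at each stage, and the verification that $\{Y' → Y, Z' → Y\}$ is completely decomposed, which is exactly the place where regularity can be dispensed with.
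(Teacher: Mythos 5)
Your strategy is the right one and is essentially the argument the paper invokes: the paper's proof of Corollary~\ref{cor:normalform} simply cites \cite[Prop.~5.9]{SV00} for the $\cdh$-case and states that the same noetherian induction, with Lemma~\ref{lem:makecover} inserted at the appropriate place, handles the $\rh$- and $\eh$-cases, with Proposition~\ref{prop:cdpRefinable} supplying the final claim for regular $Y$. Your reduction to finitely many alternating layers, the commutation of a $\cdp$-layer past a topological layer via flattening (Lemma~\ref{lem:makecover}), and the supplementary induction repairing the failure of proper birational morphisms to be completely decomposed over a singular base are exactly the ingredients of that argument.

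There is one point where the plan as written does not quite close, and it is more than bookkeeping: Lemma~\ref{lem:makecover} requires the members $U_i$ of the topological cover to be integral, and a Nisnevich or étale cover of an integral scheme need not admit a refinement by integral schemes that is still a cover of the same type (an étale neighbourhood of a non-unibranch point can be connected but reducible, and its irreducible components are not étale over the base). One can only arrange integrality after discarding the non-dominant components and the overlaps of the dominant ones, which yields a topological cover of a dense open $W \subseteq Y$ only. Consequently the commutation step can only be performed generically, so the induction on closed subschemes cannot be appended at the end solely to fix complete decomposability: it must be woven through the commutation itself, with the complement $Y \setminus W$ fed back into the inductive hypothesis at each stage. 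This is precisely the shape of the Suslin--Voevodsky proof. Relatedly, you should run a noetherian induction on closed subschemes rather than an induction on $\dim Y$, since finite type over a noetherian base does not bound the dimension. The remaining steps --- the reduction to integral $Y$ via the $\cdp$-morphism from the reduced irreducible components, the use of Step 2 of Proposition~\ref{prop:cdpRefinable} to refine $\cdp$-morphisms by proper birational ones over an irreducible base, and the assembly $Y' \amalg Z' \to Y$ into a $\cdp$-covering --- are correct.
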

\begin{proof}
  In the $\cdh$-case, this is precisely \cite[Prop.~5.9]{SV00}.  In the two
  other cases, the same argument works using noetherian induction, and using
  Lemma~\ref{lem:makecover} in the appropriate place.  The last claim uses
  Proposition~\ref{prop:cdpRefinable}.
\end{proof}

% !TEX root = sdh.tex
%
% Do not edit the following line.  The text is automatically updated by
% subversion.
%
\svnid{$Id: 03.tex 237 2015-02-20 13:02:26Z kebekus $}

\section{(Non-)Functoriality of torsion-forms}
\label{section:torsionFree}
\subversionInfo
\approvals{
  Annette & yes \\
  Shane & yes \\
  Stefan & yes
}

One very useful feature of differential forms on a smooth varieties is that they
are a vector bundle, in particular torsion-free.  In characteristic zero, the
different candidates for a good theory of differential forms share this
behaviour on all varieties.  It is disappointing but true that this property
\emph{fails} in positive characteristic, as we are going to establish.  The
following notion will be used throughout.

\begin{defi}[\protect{Torsion-differentials and torsion-free differentials, \cite[Section~2.1]{MR3084424}}]\label{defi:torsionfree}
  Let $k$ be a field and $X ∈ \Sch(k)$.  We define the sheaf $\check{Ω}^n_X$ on
  $X_{\Zar}$ as the cokernel in the sequence
  \begin{equation}\label{eq:torSeq}
    \xymatrix{ %
      0 \ar[r] & \tor Ω^n_X \ar[r]^{ α_X} & Ω^n_X \ar[r]^{β_X} & \check{Ω}^n_X \ar[r] & 0 }
  \end{equation}
  Sections in $\tor Ω^n_X$ are called \emph{torsion-differentials}.  By slight
  abuse of language, we refer to sections in $\check{Ω}^n_X$ as
  \emph{torsion-free differentials}.
\end{defi}

\begin{rema}[Torsion-sheaves on reducible spaces]
  Much of the literature discusses torsion-sheaves and torsion-free sheaves only
  in a setting where the underlying space is irreducible.  We refer to
  \cite[Appendix~A and references there]{MR3084424} for a brief discussion of
  torsion-sheaves on reduced, but possibly reducible spaces.
\end{rema}

\subsection{Torsion-free forms over the complex numbers}
\approvals{
  Annette & yes \\
  Shane & yes\\
  Stefan & yes
}

Given a morphism between two varieties that are defined over the complex
numbers, the usual pull-back map of Kähler differentials induces pull-back maps
for torsion-differentials and for torsion-free differentials, even if the image
of the morphism is contained in the singular set of the target variety.

\begin{theo}[\protect{Pull-back for sheaves of torsion-free differentials, \cite[Corollary~2.7]{MR3084424}}]\label{thm:pb-torDiff-cplx}
  Let $f : X → Y$ be a morphism of reduced, quasi-projective schemes that are
  defined over the complex numbers.  Then there exist unique morphisms $d_{\tor}
  f$ and $\check df$ such that the following diagram, which has exact rows,
  becomes commutative
  $$
  \xymatrix{ %
    & f^* \tor Ω^n_Y \ar[rr]^{f^* α_Y} \ar[d]_{d_{\tor} f} && f^* Ω^n_Y \ar[rr]^{f^* β_Y} \ar[d]_{df} && f^* \check{Ω}^n_Y \ar[r] \ar[d]^{\check df} & 0 \\
    0 \ar[r] & \tor Ω^n_X \ar[rr]_{α_X} && Ω^n_X \ar[rr]_{β_X} && \check{Ω}^n_X \ar[r] & 0.  &
  }
  $$
  In other words, $\torΩ^n$ is a presheaf on $\Sch(\bC)$.  \qed
\end{theo}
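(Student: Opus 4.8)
The plan is to reduce the entire statement to a single assertion about how torsion forms behave at the level of residue fields, and then to prove that assertion using resolution of singularities together with the separability of field extensions in characteristic zero. To begin, note that the two squares determine $d_{\tor}f$ and $\check{d}f$ uniquely as soon as they exist: since $\alpha_X$ is a monomorphism, any $d_{\tor}f$ with $\alpha_X\circ d_{\tor}f = df\circ f^*\alpha_Y$ is unique; and since $f^*$ is right exact the top row is exact, so $f^*\beta_Y$ is an epimorphism and any $\check{d}f$ with $\check{d}f\circ f^*\beta_Y = \beta_X\circ df$ is unique. Hence the whole content of the theorem is the single containment
\[
  df\bigl(\img(f^*\alpha_Y)\bigr)\subseteq \img(\alpha_X)=\tor\Omega^n_X .
\]
Indeed, granting this, $df\circ f^*\alpha_Y$ factors through $\alpha_X$ to give $d_{\tor}f$; then $\beta_X\circ df$ annihilates $\img(f^*\alpha_Y)$, hence factors through $f^*\check\Omega^n_Y=\coker(f^*\alpha_Y)$ to give $\check{d}f$; and both squares commute by construction.

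Next I would reformulate this containment pointwise. Because $X$ is reduced, a section of $\Omega^n_X$ lies in $\tor\Omega^n_X$ exactly when it vanishes in the stalk $\Omega^n_{X,\eta}=\Omega^n_{\kappa(\eta)/\bC}$ at every generic point $\eta$ of $X$ (the local ring at a generic point being the field $\kappa(\eta)$). For a local torsion section $\omega$ of $\Omega^n_Y$ and a generic point $\eta$ of $X$ with image $\xi=f(\eta)$, the local homomorphism $\mathcal O_{Y,\xi}\to\kappa(\eta)$ factors through $\kappa(\xi)$, so the stalk of $df(f^*\omega)$ at $\eta$ is the image of $\omega$ under $\Omega^n_{\kappa(\xi)/\bC}\to \Omega^n_{\kappa(\eta)/\bC}$. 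It therefore suffices to prove the key assertion: \emph{for every $\omega\in\tor\Omega^n_Y$ and every point $\xi\in Y$, the image of $\omega$ in $\Omega^n_{\kappa(\xi)/\bC}$ vanishes.}

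Finally I would prove this key assertion, which is the only genuinely hard step, using characteristic zero twice. Choose a resolution of singularities $\rho\colon \widetilde Y\to Y$, so that $\widetilde Y$ is smooth and $\rho$ is proper, surjective, and birational on each component. Since $\omega$ vanishes on a dense open $U\subseteq Y$, its pullback $\rho^*\omega$ vanishes on the dense open $\rho^{-1}(U)$; as $\Omega^n_{\widetilde Y}$ is locally free and hence torsion-free, this forces $\rho^*\omega=0$. Now pick any $\widetilde\xi\in\rho^{-1}(\xi)$, giving a finitely generated field extension $\kappa(\xi)\into\kappa(\widetilde\xi)$. Because $\operatorname{char}\bC=0$ this extension is separable, so the canonical map
\[
  \Omega^n_{\kappa(\xi)/\bC}\longrightarrow \Omega^n_{\kappa(\widetilde\xi)/\bC}
\]
is injective: the first fundamental sequence is left exact and splits over the field $\kappa(\widetilde\xi)$, and $\wedge^n$ preserves split injections of free modules, while base change along $\kappa(\xi)\into\kappa(\widetilde\xi)$ is faithfully flat. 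The image of $\omega$ in $\Omega^n_{\kappa(\widetilde\xi)/\bC}$ is the value of $\rho^*\omega=0$ at $\widetilde\xi$, hence vanishes, and injectivity then forces the image of $\omega$ in $\Omega^n_{\kappa(\xi)/\bC}$ to vanish as well.

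The main obstacle is exactly this last step, and it is where the hypothesis on the characteristic is indispensable. In characteristic $p$ both ingredients can fail: resolutions need not be available, and, more essentially, the extension $\kappa(\xi)\into\kappa(\widetilde\xi)$ may be inseparable so that the map on residue-field differentials acquires a kernel. This is precisely the mechanism behind the failure of functoriality of torsion forms recorded in Corollary~\ref{coro:rs_not_torsionfree2} and the surrounding examples.
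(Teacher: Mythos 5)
Your proof is correct. The paper does not prove Theorem~\ref{thm:pb-torDiff-cplx} itself but quotes it from \cite[Corollary~2.7]{MR3084424}, and your argument --- uniqueness from $α_X$ being monic and $f^*β_Y$ epi, reduction of existence to the vanishing of a torsion form in $Ω^n_{\kappa(\xi)/\bC}$ for every point $\xi∈ Y$, and the proof of that vanishing by killing the form on a resolution and descending along the separable (characteristic zero) residue field extension $\kappa(\xi)\into\kappa(\wtilde{\xi})$ --- is essentially the argument of that reference, including its correct diagnosis of exactly which step breaks in positive characteristic.
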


The same argument works for any field of characteristic zero.

\begin{rema}[Earlier results]
  For complex spaces, the existence of a map $\check d$ has been shown by
  Ferrari, \cite[Proposition~1.1]{Ferr70}, although it is perhaps not obvious that the
  sheaf discussed in Ferrari's paper agrees with the sheaf of Kähler
  differentials modulo torsion.
\end{rema}

\begin{warn}[Theorem~\ref{thm:pb-torDiff-cplx} is wrong in the relative setup]
  One can easily define torsion-differentials and torsion-free differentials in
  the relative setting.  The proof of Theorem~\ref{thm:pb-torDiff-cplx},
  however, relies on the existence of a resolution of singularities for which no
  analogue exists in the relative case.  As a matter of fact,
  Theorem~\ref{thm:pb-torDiff-cplx} becomes wrong when working with relative
  differentials, unless one makes rather strong additional assumptions.  A
  simple example is given in \cite[Warning~2.6]{MR3084424}.
\end{warn}

\subsection{Torsion-free forms in positive characteristic}
\approvals{
  Annette & yes\\
  Shane & yes\\
  Stefan & yes
}

Now let $f : X → Y$ be a morphism of reduced, quasi-projective schemes that are
defined over a field of finite characteristic.  We will see in this section that
in stark contrast to the case of complex varieties, the pull-back map $df$ of
Kähler differential does generally \emph{not} induce a pull-back map between the
sheaves of torsion-free differential forms.

Indeed, if there exists a pull-back map $\check df : f^* \check{Ω}^n_{Y} →
\check{Ω}^n_X$ which makes the following diagram commute,
$$
\xymatrix{ %
  f^* Ω^n_Y \ar[rr]^{f^* β_Y} \ar[d]_{df} && f^* \check{Ω}^n_Y \ar[d]^{\check df} \\
  Ω^n_X \ar[rr]_{β_X} && \check{Ω}^n_X,
}
$$
and if $σ ∈ \tor Ω¹_Y$ is any torsion-differential, then $df(σ)$ is necessarily
a torsion-differential on $X$, that is $df(σ) ∈ \tor Ω¹_X$.  The following
example discusses a morphism between varieties for which this property does not
hold.

\begin{exam}[Pull-back of torsion-form is generally not torsion]\label{exam:pullbackTorsionNotTorsion}
  Variants of this example work for any prime $p$, but we choose $p = 2$ for
  concreteness.  Let $k$ be an algebraically closed field of characteristic two
  and let $Y ⊂ \bA³_k$ be the Whitney umbrella.  More precisely, consider the
  ring $R := \factor{k[x,y,z]}{(y²-xz²)}$ and the schemes
  $$
  X := \Spec\, k[x] \quad \text{and} \quad Y := \Spec\, R.
  $$
  An elementary computation shows that the polynomial $y²-xz²$ is irreducible.
  As a consequence, we see that $Y$ is reduced and irreducible and that $z$ is
  not a zerodivisor in $R$.  Finally, let $f : X → Y$ be the obvious inclusion
  map, which identifies $X$ with the $x$-axis in $\bA³$, and which is given by
  the following map of rings,
  $$
  f^\# : \factor{k[x,y,z]}{(y²-xz²)} → k[x] \qquad Q(x,y,z) \mapsto Q(x,0,0).
  $$
  Note that $X$ is nothing but the reduced singular locus of $Y$.  We want to
  construct a torsion-differential $σ$ on $Y$.  To this end, notice that the
  differential form $dP ∈ Γ\bigl(Ω¹_{\bA³}\bigr)$, where $P = y²-xz²$, induces
  the zero-form on $Y$,
  $$
  0 = dP = - z² · dx + 2y· dy - 2xz · dz = -z² · dx ∈ Γ \bigl(
  Ω¹_Y \bigr).
  $$
  Since $z²$ is not a zerodivisor, we see that the form $σ := dx$ is torsion,
  that is, $σ ∈ Γ \bigl(\tor Ω¹_Y \bigr)$.  On the other hand, the pull-back of
  $σ$ to $X$ is clearly given by $df (σ) = dx ∈ Γ \bigl( Ω¹_X \bigr)$, which is
  not torsion.
\end{exam}

\begin{summ}\label{summ:torsion}
  Example~\ref{exam:pullbackTorsionNotTorsion} shows that the assignments
  $X\mapsto \torΩ^n(X)$ and $X\mapsto\check{Ω}^n_X(X)$ do not in general define
  presheaves on $\Sch(k)$.
\end{summ}

% !TEX root = sdh.tex
%
% Do not edit the following line.  The text is automatically updated by
% subversion.
%
\svnid{$Id: 04.tex 239 2015-02-20 14:28:18Z kebekus $}

\section{The extension functor}
\label{sec:extension}
\subversionInfo

\subsection{Definition and first properties} \label{section:functorRs}
\approvals{
  Annette & yes \\
  Shane & yes \\
  Stefan & yes
}

In analogy with the sheaves discussed in the introduction, we aim to define a
``good'' sheaf on $\Sch(S)$, which agrees on $\Reg(S)$ with Kähler differentials
and avoids that pathologies exposed by Kähler differentials on singular schemes.
This section provides the technical framework for one construction in this
direction: ignoring non-regular schemes, we define a sheaf on $\Sch(S)$ whose
value group at one $X ∈ \Sch(S)$ is determined by differential forms on regular
schemes over $X$.  The following definition makes this idea precise.

\begin{defi}[Extension functor]\label{defi:rs1}
  Given a presheaf $\Fh$ on $\Reg(S)$, define a presheaf $\Fh_{\rs}$ on
  $\Sch(S)$ by setting
  $$
  \Fh_{\rs}(X) := \varprojlim_{Y ∈ \Reg(X)} \Fh(Y) \quad \text{for any } X ∈
  \Sch(S).
  $$
  The assignment
  $$
  \begin{array}{ccc}
    \{\text{Presheaves on }\Reg(S) \} & → & \{\text{Presheaves on }\Sch(S) \} \\
    \Fh & \mapsto & \Fh_{\rs}
  \end{array}
  $$
  is clearly functorial, and referred to as the \emph{extension functor}.
\end{defi}

\begin{warn} \label{warn:Freg}
  ---
  \begin{enumerate}
  \item Observe $\Fh_{\rs}(X)$ is defined via limits and not by colimits.

  \item\label{warn:Freg:funct} Given a morphism $X' → X$, the induced morphism
    $\Fh_\rs(X) → \Fh_\rs(X')$ is induced by the composition functor, $\Reg(X')
    → \Reg(X)$.  It is easy to forget this and think that it is induced by $X'
    {⨯_X}-$, which does not necessarily preserve regular schemes.
  \end{enumerate}
\end{warn}

\begin{rema}[Elementary properties of the extension functor]\label{rema:firstRemarks}
  ---
  \begin{enumerate}
  \item Explicitly, a section of $\Fh_{\rs}(X)$ is a sequence of compatible
    sections.  To give a section, it is therefore equivalent to give an element
    $s_Y ∈ \Fh(Y)$ for every morphism $Y → X$ with $Y ∈ \Reg(X)$, such that the
    following compatibility conditions hold: for every triangle $Y' → Y → X$
    with $Y', Y ∈ \Reg(X)$, we have $s_Y|_{Y'} = s_{Y'}$.

  \item The assignment $\Fh \mapsto \Fh_{\rs}$ could equivalently be defined as
    the right adjoint to the restriction functor from presheaves on $\Sch(S)$ to
    presheaves on $\Reg(S)$.

  \item\label{rema:firstRemarks:regularSame} If $X$ itself is regular, then
    $\Fh(X) = \Fh_{\rs}(X)$, since $X$ is then a final object in $\Reg(X)$.

  \item Let $S=\Spec\, k$ with $k$ a field of characteristic zero.  Consider the
    presheaf $\Fh = Ω^n$.  Under these assumptions, it has been shown in
    \cite[Theorem~1]{HJ} that $Ω^n_\rs = Ω^n_\h$.
  \end{enumerate}
\end{rema}

\begin{lemm}[Extension preserves sheaves]\label{lemm:issheaf}
  In the setting of Definition~\ref{defi:rs1}, suppose that $τ$ is a topology on
  $\Sch(S)$ that is equal to or coarser than the étale topology.  Observe that
  $τ$ restricts to a topology $τ_{\rs}$ on $\Reg(S)$.  If $\Fh$ is a
  $τ_{\rs}$-sheaf (respectively, a $τ_{\rs}$-separated presheaf) on $\Reg(S)$,
  then $\Fh_{\rs}$ is a $τ$-sheaf (respectively, a $τ$-separated presheaf) on
  $\Sch(S)$.
\end{lemm}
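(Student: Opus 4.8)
The plan is to verify the sheaf axiom for $\Fh_{\rs}$ directly against an arbitrary $\tau$-cover $\{U_i \to X\}_{i \in I}$ of an object $X \in \Sch(S)$, reducing it to the sheaf axiom for $\Fh$ on $\Reg(S)$. The one structural input I would isolate at the outset is this: since $\tau$ is coarser than or equal to the étale topology, every member $U_i \to X$ is étale, so for any $Y \in \Reg(X)$ the base change $V_i := U_i \times_X Y \to Y$ is again étale; as étale morphisms preserve regularity, each $V_i$ is regular, and $\{V_i \to Y\}_{i \in I}$ is a $\tau_{\rs}$-cover of $Y$ inside $\Reg(S)$. Thus every object of $\Reg(X)$ acquires a canonical $\tau_{\rs}$-cover by pullback, and this is what lets the sheaf property of $\Fh$ be applied object-by-object over $\Reg(X)$.

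Next I would unwind the definition of $\Fh_{\rs}$ via compatible families, as in Remark~\ref{rema:firstRemarks}: a section of $\Fh_{\rs}(X)$ is a family $(s_Y)_{Y \in \Reg(X)}$ with $s_Y \in \Fh(Y)$ and $s_Y|_{Y'} = s_{Y'}$ for every $Y' \to Y$ over $X$, and restriction along $U_i \to X$ simply re-indexes such a family along the composition functor $\Reg(U_i) \to \Reg(X)$. So, given a matching family $(t^{(i)})_{i}$ with $t^{(i)} \in \Fh_{\rs}(U_i)$ agreeing on each $\Fh_{\rs}(U_i \times_X U_j)$, I would produce a section of $\Fh_{\rs}(X)$ as follows. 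For each $Y \in \Reg(X)$ form the $\tau_{\rs}$-cover $\{V_i \to Y\}$ above and consider the elements $t^{(i)}_{V_i} \in \Fh(V_i)$, where $V_i$ is regarded as an object of $\Reg(U_i)$ via the projection. The canonical isomorphism $(U_i \times_X U_j) \times_X Y \cong V_i \times_Y V_j$ together with the matching condition on $U_i \times_X U_j$ shows that $(t^{(i)}_{V_i})_i$ is a matching family for the cover $\{V_i \to Y\}$; since $\Fh$ is a $\tau_{\rs}$-sheaf it glues to a unique $s_Y \in \Fh(Y)$.

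It then remains to check that $(s_Y)_{Y \in \Reg(X)}$ is genuinely a section of $\Fh_{\rs}(X)$, that it restricts to the given $t^{(i)}$, and that it is the only such section. Compatibility $s_Y|_{Y'} = s_{Y'}$ for $Y' \to Y$ in $\Reg(X)$ follows from the identification $V_i \times_Y Y' \cong U_i \times_X Y'$ and uniqueness of gluing, since both sides restrict to $t^{(i)}_{U_i \times_X Y'}$ on the induced cover of $Y'$. That the resulting section restricts to $t^{(i)}$ on $U_i$ is immediate on plugging in objects $Y \in \Reg(U_i)$, for which $V_i$ carries the diagonal section; and uniqueness holds because any two preimages agree after restriction to every $V_i$, hence agree by separatedness of $\Fh$. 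For the separated-presheaf statement I would run only this last uniqueness argument, which uses separatedness of $\Fh$ alone.

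The routine part is the diagram-chasing with fibre products; the only real content is the observation in the first paragraph that étale base change keeps us inside $\Reg(S)$ and turns $\tau$-covers into $\tau_{\rs}$-covers. I expect the main bookkeeping obstacle to be verifying the matching condition and the compatibility $s_Y|_{Y'}=s_{Y'}$ coherently, i.e.\ checking that the various canonical isomorphisms of fibre products are compatible with the transition maps defining $\Fh_{\rs}$. This is where one must be careful, in light of Warning~\ref{warn:Freg:funct}, that restriction in $\Fh_{\rs}$ is induced by composition of structure morphisms and not by a naive fibre product, so that the re-indexing of families along $\Reg(U_i) \to \Reg(X)$ is the correct operation throughout.
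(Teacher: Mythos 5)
Your proposal is correct and follows essentially the same route as the paper's proof: the key point in both is that a $τ$-cover of $X$ pulls back along any $Y → X$ with $Y ∈ \Reg(X)$ to an étale, hence regular, $τ_{\rs}$-cover of $Y$, after which the separatedness and gluing axioms for $\Fh_{\rs}$ are verified object-by-object over $\Reg(X)$ using the corresponding axioms for $\Fh$. The only differences are cosmetic (you work with a family $\{U_i → X\}$ where the paper uses a single cover $U → X$, and you spell out the compatibility checks the paper leaves implicit).
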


Lemma~\ref{lemm:issheaf} is in fact a consequence of $\Reg(S) → \Sch(S)$ being a
cocontinuous morphism of sites for such a topology, cf.\
\cite[Definitions~III.2.1 and II.1.2]{SGA41} and \cite[Tags
\href{http://stacks.math.columbia.edu/tag/00XF}{00XF} and
\href{http://stacks.math.columbia.edu/tag/00XI}{00XI}]{stacks-project}.  For the
reader who is not at ease with categorical constructions we give an explicit
proof below.  The key ingredient is the fact that a $τ$-cover of a regular
scheme is regular when $τ$ is coarser than or equal to the étale topology.

\begin{proof}[Proof of Lemma~\ref{lemm:issheaf}]
  We prove the two assertions of Lemma~\ref{lemm:issheaf} separately.

  \subsubsection*{Step 1, $(-)_{\rs}$ preserves separatedness:}

  Assume that $\Fh$ is a $τ_{\rs}$-separated presheaf.  To prove that
  $\Fh_{\rs}$ is $τ$-separated, consider a scheme $X ∈ \Sch(S)$, a $τ$-cover $U
  → X$ and two sections $s, t ∈ \Fh_{\rs}(X)$.  We would like to show that the
  assumption that $s|_U$ and $t|_U$ agree as elements of $\Fh_{\rs}(U)$ implies
  that $s = t$.  Notice that a consequence of the definition of $(-)_{\rs}$ is
  that two sections $s, t ∈ \Fh_{\rs}(X)$ are equal if and only if for every $Y →
  X$ in $\Reg(X)$, their restrictions $s|_Y$ and $t|_Y$ are equal.

  Consider the fibre product diagram,
  $$
  \xymatrix{ %
    U ⨯_X Y =: U_Y \ar@<8mm>[d]_{\text{étale}} \ar[rr] && U \ar[d]^{\text{étale}} \\
    {\hphantom{U ⨯_X Y =: }} Y \ar[rr] && X.
  }
  $$
  Since $U → X$ is étale by assumption, the fibre product $U_Y := U⨯_X Y$ is
  étale over $Y$ (which we are assuming is regular) and therefore $U_Y$ is also
  regular.  By functoriality, equality of $s|_U$ and $t|_U$ implies equality of
  the restrictions $s|_{U_Y}$ and $t|_{U_Y}$, and by
  Remark~\ref{rema:firstRemarks:regularSame} above, we have $\Fh_{\rs}(U_Y) =
  \Fh(U_Y)$ and $\Fh_{\rs}(Y) = \Fh(Y)$.  Since $U_Y$ is a $τ_{\rs}$-cover of
  $Y$, separatedness of $\Fh$ therefore guarantees that the elements $s|_Y, t|_Y
  ∈ \Fh(Y)_{\rs} = \Fh(Y)$ agree.

  \subsubsection*{Step 2, $(-)_{\rs}$ preserves sheaves:}

  Assume that $\Fh$ is a $τ_{\rs}$-sheaf, that $X ∈ \Sch(S)$ and that $U → X$ is
  a $τ$-cover, with connected components $(U_i)_{i ∈ I}$.  Assume further that
  we are given one $s ∈ \Fh_{\rs}(U)$ such that the associated restrictions
  satisfy the compatibility condition
  \begin{equation}\label{eq:xsg}
    s|_{U_i {⨯_X} U_j} = s|_{U_i {⨯_X} U_j} \quad\text{for every } i, j ∈ I.
  \end{equation}
  To prove that $\Fh_{\rs}$ is a $τ$-sheaf, we need to construct a section $t ∈
  \Fh_{\rs}(X)$ whose restriction $t|_U$ agrees with the $s|_U$.  Once $t$ is
  found, uniqueness follows from $τ$-separatedness of $\Fh_{\rs}$ that was shown
  above.

  To give $t ∈ \Fh_{\rs}(X)$, it is equivalent to give compatible elements $t_Y
  ∈ \Fh(Y)$, for every $Y ∈ \Reg(X)$.  Given one such $Y$, consider the base
  change diagram
  $$
  \xymatrix{ %
    (U ⨯_X U)_Y \ar@<-0.5ex>[rr] \ar@<0.5ex>[rr] \ar[d] && U_Y \ar[d] \ar[rr]^{\text{étale}} && Y \ar[d] \\
    U ⨯_X U \ar@<-0.5ex>[rr] \ar@<0.5ex>[rr] && U \ar[rr]_{\text{étale}} && X.
  }
  $$
  As before, observe that $U_Y = U ⨯_X Y$ and $(U ⨯_X U)_Y = U ⨯_X U ⨯_X Y$ are
  regular, that $\Fh_{\rs}(U_Y) = \Fh(U_Y)$ and $\Fh_{\rs}((U {⨯_X} U)_Y) =
  \Fh((U {⨯_X} U)_Y)$, and that $s|_{U_Y} ∈ \Fh(U_Y)$ satisfies a compatibility
  condition, analogous to \eqref{eq:xsg}.  Since $\Fh$ is a $τ_{\rs}$-sheaf, we
  obtain an element $t_Y ∈ \Fh(Y) = \Fh_{\rs}(Y)$ such that $t_Y|_{U_Y} =
  s|_{U_Y}$.  The elements $t_Y$ so constructed clearly define an element $t ∈
  \Fh_{\rs}(X)$ where $t|_U = s|_U$.
\end{proof}

\subsection{Unramified presheaves}
\approvals{
  Annette & yes \\
  Shane & yes\\
  Stefan & yes
}

We will see in Section~\ref{sec:44} that the extension functor admits a
particularly simple description whenever the presheaf $\Fh$ is
\emph{unramified}.  The notion is due to Morel.  We refer the reader to
\cite[Definition~2.1 and Remarks~2.2, 2.4]{Mor12} for a detailed discussion of
unramified presheaves, in the case where the base scheme $S$ is the spectrum of
a perfect field.

\begin{defi}[Unramified presheaf]\label{def:unr}
  A presheaf $\Fh$ on $\Reg(S)$ is \emph{unramified} if the following axioms are
  satisfied for all $X, Y ∈ \Reg(S)$.
  \begin{enumerate}[label=(UNR\arabic{enumi})]
  \item\label{unr0} The canonical morphism $\Fh(X \amalg Y) → \Fh(X) ⨯ \Fh(Y)$
    is an isomorphism.

  \item\label{unr1} If $U → X$ is a dense open immersion, then $\Fh(X) → \Fh(U)$
    is injective.

  \item\label{unr2} The presheaf $\Fh$ is a Zariski sheaf, and for every open
    immersion $U → X$ which contains all points of codimension $≤ 1$ the
    morphism $\Fh(X) → \Fh(U)$ is an isomorphism.
  \end{enumerate}
  We will say that a presheaf $\Fh$ on $\Sch(S)$ is unramified if its
  restriction to $\Reg(S)$ is unramified.
\end{defi}

\begin{exam}\label{ex:unramified}
  ---
  \begin{enumerate}
  \item The sheaf $\sO$ is unramified \cite[Theorem~38, page~124]{Mat}.

  \item If $\Fh$ is a presheaf on $\Reg(S)$ whose restrictions $\Fh|_{X_\Zar}$
    to the small Zariski sites $X|_{\Zar}$ of each $X ∈ \Reg(S)$ are locally
    free, coherent $\sO_X$-modules, then $\Fh$ is unramified.

  \item If $S = \Spec\, k$ is the spectrum of a perfect field, then the
    presheaves $Ω^n$ are unramified, for all $n ≥ 0$.

  \item There are other, important examples of unramified presheaves, which fall
    out of the scope of the article.  These include the Zariski sheafifications
    of $K$-theory, étale cohomology with finite coefficients (prime to the
    characteristic), or homotopy invariant Nisnevich sheaves with transfers.
    More generally, all reciprocity sheaves in the sense of \cite{KSY} which are
    Zariski sheaves satisfy~\ref{unr1} and conjecturally satisfy \ref{unr2}, by
    \cite[Theorem~6 and Conjecture~1]{KSY}.
 \end{enumerate}
\end{exam}

\subsection{Discrete valuation rings}
\label{ssec:dvr}
\approvals{
  Annette & yes \\
  Shane & yes\\
  Stefan & yes
}

If $X ∈ \Sch(S)$ is any scheme over $S$ and $x ∈ X$ any point, we consider
$\Spec\, \sO_{X, x}$, which can be seen as the intersection of (usually
infinitely many) open subschemes $U_i ⊆ X$.  Hence it is an example of a scheme
essentially of finite type over $S$.

\begin{defi}[The category $\Cur(S)$]\label{def:essFT}
  The category of schemes essentially of finite type over $S$ will be denoted by
  $\Sch(S)\ess$.  It contains $\Sch(S)$ as a subcategory.  Let $\Cur(S)$ be the
  category of schemes essentially of finite type which are regular, local, and
  of dimension $≤ 1$.
\end{defi}

\begin{rema}[A description of $\Cur(S)$]
  The schemes in $\Cur(S)$ are of the form $\Spec\, R$ with $R$ either a field
  or a discrete valuation ring.  More precisely, $R=\sO_{X,x}$ where $X ∈
  \Reg(S)$ and $x ∈ X$ is a point of codimension one or zero.  The latter
  amounts to a scheme of the form $\Spec\, K$ for a field extension $K / k(s)$
  of finite transcendence degree of the residue field $k(s)$ of a point $s ∈ S$.
\end{rema}

Any presheaf $\Fh$ on $\Sch(S)$ extends to a presheaf on the larger category
$\Sch(S)\ess$, in a canonical way\footnote{In contrast to $(-)_{\rs}$ however,
  $(-)\ess$ is a \emph{left} Kan extension as opposed to a right Kan extension
  and so the limits in the definition are \emph{co}limits instead of (inverse)
  limits.  That is, instead of a section being described as a coherent sequence
  of sections, it is given by an equivalence class of sections.  Recall the
  difference $\Pi X_i$ vs $\amalg X_i$, or $\varprojlim_{n} \bZ / p^n = \bZ_p$
  vs $\varinjlim_{n} \bZ / p^n = \{ e^{nπ i / p^k} : n, k ∈ \bZ \} ⊂ \bC$.}.

\begin{propdef}[Extension from $\Sch(S)$ to $\Sch(S)\ess$]\label{defi:rs2}
  Given a presheaf $\Fh$ on $\Sch(S)$, define a presheaf $\Fh\ess$ on
  $\Sch(S)\ess$ as follows.  Given any $X' ∈ \Sch(S)\ess$, choose $X ∈ \Sch(S)$
  and any filtred inverse system $\{U_i\}_{i∈ I}$ of open subschemes with affine
  transition maps with intersection $X' \isom ∩_{i ∈ I} U_i$, and write
  $$
  \Fh\ess(X') := \varinjlim_{i ∈ I} \Fh(U_i).
  $$
  Then, $\Fh\ess$ is well-defined and functorial, in particular independent of the choice of
  $X$ and $U_i$.  Moreover,
  $$
  \Fh\ess|_{\Sch(S)}=\Fh.
  $$
\end{propdef}

The proof of \ref{defi:rs2}, given below \vpageref{pf:defi:rs2}, uses the
pro-category of schemes.  To prepare for the proof, we briefly recall the
relevant definitions.

\begin{remi}[Cofiltred category]
  A category is called \emph{cofiltred} if the following conditions hold.
  \begin{enumerate}
  \item The category has at least one object.
  \item For every pair of objects $λ$, $λ'$ there is a third object $λ''$ with
    morphisms $λ'' {→} λ$, $λ'' {→} λ'$ towards both of them.
  \item For every pair of parallel morphisms $λ' \rightrightarrows λ$ there is a
    third morphism $λ'' {→} λ'$ such that the two compositions are equal.
  \end{enumerate}
\end{remi}

\begin{remi}[Pro-category of schemes]
  The objects of the pro-category of schemes are pairs $(Λ, X)$ with $Λ$ a
  small, cofiltred category and $Λ \stackrel{X}{→} \Sch(S)$ a contravariant
  functor, and the set of morphisms between two pro-schemes $(Λ, X), (M, Y)$ is
  \emph{defined} as:
  $$
  \hom_{\Pro(\Sch(S))} \bigl( (Λ, X),\, (M, Y) \bigr) := \varprojlim_{μ ∈ M}
  \varinjlim_{λ ∈ Λ} \hom_{\Sch(S)}(X_{λ}, Y_{μ}).
  $$
\end{remi}

\begin{proof}[Proof of well-definedness in Proposition~\ref{defi:rs2}]\label{pf:defi:rs2}
  We have seen in Lemma~\ref{lem:mor_ft} and Remark~\ref{rem:lem:mor_ft} that
  $\Sch(S)\ess$ is equivalent to a subcategory of $\Pro \bigl( \Sch(S) \bigr)$.
  To prove well-definedness and functoriality, it will therefore suffice to note that $\Fh$ has a
  well-defined functorial extension, say $\Fh^\Pro$ to $\Pro \bigl( \Sch(S) \bigr)$, and
  that $\Fh\ess$ is just $\Fh^{\Pro}|_{\Sch(S)\ess}$, considering $\Sch(S)\ess$
  as a subcategory of $\Pro \bigl( \Sch(S) \bigr)$ by abuse of notation.
  Indeed, for any pro-scheme $(Λ, X)$, we can define
  $$
  \Fh^{\Pro}(Λ, X) = \varinjlim_{λ ∈ Λ} \Fh(X_{λ}).
  $$
  It follows from functoriality, that given any two pro-schemes $(Λ, X)$, $(M,
  Y)$, an element of $\hom_{\Pro(\Sch(S))} \bigl( (Λ, X),\, (M, Y) \bigr)$
  induces an element of
  $$
  \varprojlim_{μ ∈ M}\varinjlim_{λ ∈ Λ}\Hom \bigl(\Fh(Y_{μ}),\, \Fh(X_{λ})
  \bigr),
  $$
  and from therefore an element in
  $$
  \Hom \Bigl( \varinjlim_{μ ∈ M} \Fh(Y_{μ}),\, \varinjlim_{λ ∈ Λ} \Fh(X_{λ})
  \Bigr).
  $$
  One checks that all this is compatible with the composition in the various
  categories.
\end{proof}

\begin{rema}\label{rem:dfg2}
  In the setting of Definition~\ref{defi:rs2}, if $X ∈ \Sch(S)$, if $x ∈ X$ and
  if $\{U_i \}_{i ∈ I}$ are the open affine subschemes of $X$ containing $x$,
  then $\Fh\ess(∩_{i ∈ I} U_i)$ is just the usual (Zariski) stalk of $\Fh$ at
  $x$.
\end{rema}

\begin{rema}\label{rem:dfgd}
  Note, however, that in general, a presheaf $\mathcal G$ on $\Sch(S)\ess$ will
  not necessarily satisfy ${\mathcal G}(∩_{i ∈ I} U_i) = \varinjlim_{i ∈ I}
  {\mathcal G}(U_i)$, but many sheaves of interest do.  One prominent example of
  a presheaf which does satisfy ${\mathcal G} = {\mathcal G}\ess$ is $Ω^n$, for
  all $n ≥ 0$.
\end{rema}

\subsection{Description of the extension functor}
\label{sec:44}
\approvals{
  Annette & yes \\
  Shane & yes\\
  Stefan & yes
}

The following is the main result of this section.  It asserts that, under good
assumptions, the values of an extended sheaf can be reconstructed from is values
on elements of $\Cur$.

\begin{propo}[Reconstruction of unramified presheaves]\label{prop:altDefrs}
  Let $\Fh$ be an unramified presheaf on $\Sch(S)$.  If $X ∈ \Sch(S)$ is any
  scheme, then
  $$
  \Fh_{\rs}(X) = \varprojlim_{W ∈ \Cur(X)} \Fh\ess(W).
  $$
\end{propo}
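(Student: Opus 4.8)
The plan is to construct a natural comparison homomorphism
$$\Phi_X\colon \Fh_{\rs}(X) = \varprojlim_{Y ∈ \Reg(X)} \Fh(Y) \longrightarrow \varprojlim_{W ∈ \Cur(X)} \Fh\ess(W)$$
and prove it is bijective. The map $\Phi_X$ sends a compatible family $(s_Y)$ to the family whose value at $W = \Spec\,\sO_{Y,y} ∈ \Cur(X)$, with $y$ a point of codimension $\le 1$ on some $Y ∈ \Reg(X)$, is the image of $s_Y$ under the restriction $\Fh(Y) = \Fh\ess(Y) → \Fh\ess(W)$ attached to the pro-open immersion $W → Y$. This is independent of the presentation $W = \Spec\,\sO_{Y,y}$ because $(s_Y)$ is a compatible family over $\Reg(X)$, and by Remark~\ref{rem:dfg2} it computes the Zariski stalk.

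First I would record the structural consequence of the unramified axioms that drives everything: for $Y ∈ \Reg(X)$ irreducible with function field $F = k(Y)$, viewing each $\Fh\ess(\Spec\,\sO_{Y,w})$ as a subgroup of $\Fh\ess(\Spec\,F)$ via the injections of~\ref{unr1}, one has
$$\Fh(Y) = \bigcap_{w ∈ Y,\ \codim w = 1} \Fh\ess(\Spec\,\sO_{Y,w}) ⊆ \Fh\ess(\Spec\,F).$$
The inclusion ``$⊆$'' is immediate. For ``$⊇$'', a section $α$ lying in every $\Fh\ess(\Spec\,\sO_{Y,w})$ extends, for each codimension-one $w$, to some open $V_w ∋ w$; these local extensions agree on overlaps since they have equal image in $\Fh\ess(\Spec\,F)$, into which $\Fh$ injects by~\ref{unr1}, so by the Zariski sheaf property they glue to a section over $U = \bigcup_w V_w$, which contains all points of codimension $\le 1$; now~\ref{unr2} promotes it to a section on $Y$. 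For reducible $Y$ one decomposes into connected, hence irreducible, components using~\ref{unr0}.

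Injectivity of $\Phi_X$ is then immediate: if $(s_Y)$ dies in the target, each $s_Y$ vanishes at every generic point, so $s_Y = 0$ by~\ref{unr0} and~\ref{unr1}. For surjectivity, given a compatible family $(t_W)_{W ∈ \Cur(X)}$ I would set, for irreducible $Y ∈ \Reg(X)$, $s_Y := t_{\Spec\,F}$: the morphisms $\Spec\,F → \Spec\,\sO_{Y,w}$ in $\Cur(X)$ force $t_{\Spec\,F}$ to lie in each subgroup $\Fh\ess(\Spec\,\sO_{Y,w})$, so the displayed formula places $s_Y ∈ \Fh(Y)$; \ref{unr0} extends the definition to all of $\Reg(X)$. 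It then remains to check that $(s_Y)$ is a cone over $\Reg(X)$ and that $\Phi_X(s_Y) = (t_W)$.

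Both reduce to the following key claim, which I expect to be the main obstacle: for every $Y ∈ \Reg(X)$ and every point $y ∈ Y$ of \emph{arbitrary} codimension, the restriction of $s_Y$ to the residue field equals $t_{\Spec\,k(y)}$. I would prove this by induction on $\codim y$, the cases $\codim y \le 1$ following from the construction and the specialization morphism $\Spec\,k(y) → \Spec\,\sO_{Y,w}$ in $\Cur(X)$. For the inductive step I would invoke Lemma~\ref{lem:dominateByR} to produce a point $y'$ of codimension $\codim y - 1$ together with a discrete valuation ring $R$ essentially of finite type over $Y$ whose special and generic points map to $y$ and $y'$ inducing residue-field isomorphisms; thus $\Spec\,R ∈ \Cur(X)$ links the two residue fields. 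Restricting $s_Y$ along $\Spec\,R → Y$ and comparing generic fibres via the inductive hypothesis $s_Y|_{y'} = t_{\Spec\,k(y')}$, injectivity~\ref{unr1} at the dense generic point of $\Spec\,R$ identifies this restriction with $t_{\Spec\,R}$; specializing then yields $s_Y|_y = t_{\Spec\,k(y)}$. Granting the claim, the cone condition follows because any $φ\colon Y_1 → Y_2$ sends a generic point $η_1$ of $Y_1$ to some $y_2 ∈ Y_2$ with $k(y_2) \hookrightarrow k(η_1)$, so $φ^* s_{Y_2}$ and $s_{Y_1}$ agree at $η_1$ by the claim and the compatibility of $(t_W)$; since regular schemes inject into the product of their generic stalks by~\ref{unr0} and~\ref{unr1}, this gives $φ^* s_{Y_2} = s_{Y_1}$. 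The identity $\Phi_X(s_Y) = (t_W)$ is the same claim specialized to the codimension-$\le 1$ generators of $\Cur(X)$.
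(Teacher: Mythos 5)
Your argument is correct and is essentially the paper's own proof, reorganised: your ``structural consequence'' is Lemma~\ref{lemm:limitCodimOne}, your ``key claim'' with its induction on codimension via Lemma~\ref{lem:dominateByR} and \ref{unr1} is Lemma~\ref{lemm:regcase}, and the well-definedness of $\Phi_X$ (independence of the chosen regular thickening $Y$ of $W$) together with the fact that every $W \in \Cur(X)$ admits such a thickening is exactly the cofinality point the paper settles with Lemma~\ref{lem:mor_ft} and Example~\ref{ex:cofinal} --- do invoke those explicitly there, since two presentations of $W$ need not be linked by a morphism in $\Reg(X)$ but only by a common regular open neighbourhood of $W$. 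The only real difference is presentational: the paper identifies $\Fh_{\rs}(X)$ with the double limit $\varprojlim_{Y}\varprojlim_{W \in \Cur(Y)} \Fh\ess(W)$ and compares it with $\varprojlim_{W \in \Cur(X)} \Fh\ess(W)$ by forgetting $Y$, whereas you construct the comparison map and its inverse directly.
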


The proof of this Proposition~\ref{prop:altDefrs} will take the rest of the
present Section~\ref{sec:44}.  Before beginning the proof
\vpageref{proof:prop:altDefrs}, we note two lemmas that (help to) prove the
proposition in special cases.

\begin{lemm}[{cf.~\cite[Remark~1.4]{Mor12}}]\label{lemm:limitCodimOne}
  In the setting of Proposition~\ref{prop:altDefrs}, if $X ∈ \Reg(S)$, then
  $$
  \Fh(X) = \varprojlim_{x ∈ X^{(≤ 1)}} \Fh\ess \bigl(\Spec\, \sO_{X, x} \bigr)
  $$
  where $X^{(≤ 1)}$ is the subcategory of $\Cur(X)$ consisting of inclusions of
  localisations of $X$ at points of codimension $≤ 1$.
\end{lemm}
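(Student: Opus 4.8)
The plan is to run the three unramifiedness axioms \ref{unr0}, \ref{unr1}, and \ref{unr2} against each other: first reduce to the integral case, then compare $\Fh(X)$ with the limit by a direct injectivity--surjectivity argument. First I would reduce to $X$ integral. Since $X$ is regular and noetherian, its finitely many connected components are integral and regular, and there is no morphism in $X^{(\le 1)}$ between points of distinct components; axiom~\ref{unr0} then splits both sides as finite products indexed by the components, so it suffices to treat $X$ integral with generic point $\eta$ and function field $K = \sO_{X,\eta}$. In this case $X^{(\le 1)}$ has the single minimal object $\Spec K$ together with the localisations $\Spec \sO_{X,x}$ at codimension-one points $x$, each receiving exactly one arrow $\Spec K \to \Spec \sO_{X,x}$. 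By Remark~\ref{rem:dfg2} the values $\Fh\ess(\Spec \sO_{X,x})$ are the Zariski stalks $\Fh_x$, and the limit unwinds to the set of families $(s_x)$ with each $s_x \in \Fh_x$ restricting to a common element $s_\eta \in \Fh_\eta$.

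I would then compare $\Fh(X)$ with this limit through the natural restriction map $s \mapsto (s|_{\Spec \sO_{X,x}})_x$. For injectivity, \ref{unr1} makes $\Fh(X) \to \Fh(U)$ injective for every dense open $U$, and passing to the filtered colimit over dense opens shows $\Fh(X) \to \Fh_\eta$ is injective; the identical argument, applied to the dense open immersion $\Spec K \to \Spec \sO_{X,x}$, gives injectivity of $\Fh_x \to \Fh_\eta$ for each codimension-one $x$. Since the comparison map is followed by the injection into $\Fh_\eta$, it is injective. Surjectivity is the substantial step. Given a compatible family $(s_x)$, I would first represent $s_\eta$ by an honest section $\tilde s \in \Fh(U_0)$ on some dense open $U_0$. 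The codimension-one points outside $U_0$ are precisely the finitely many generic points $x_1, \dots, x_m$ of the codimension-one components of the closed set $X \setminus U_0$; I would represent each $s_{x_i}$ by a section $\sigma_i \in \Fh(V_i)$ on an open $V_i \ni x_i$, shrinking $V_i$ by deleting the closures $\overline{\{x_j\}}$ for $j \ne i$ (none of which contains $x_i$, both points having codimension one) so that $V_i$ meets $\{x_1, \dots, x_m\}$ only in $x_i$.

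Because the intersections $U_0 \cap V_i$ and $V_i \cap V_j$ are dense opens containing $\eta$, and because $\tilde s$, $\sigma_i$, $\sigma_j$ all have the same image $s_\eta$ in $\Fh_\eta$, the injectivity statements above force these sections to agree on every pairwise overlap. The Zariski sheaf property contained in \ref{unr2} then glues them to a section $s \in \Fh(U)$ on $U := U_0 \cup V_1 \cup \cdots \cup V_m$, which is an open containing every point of codimension $\le 1$. To finish, \ref{unr2} asserts that $\Fh(X) \to \Fh(U)$ is an isomorphism, so $s$ lifts uniquely to $s_X \in \Fh(X)$; it then remains to check $s_X|_{\Spec \sO_{X,x}} = s_x$ for all points of codimension $\le 1$. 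For $\eta$ and for the $x_i$ this holds by construction, while for a codimension-one point $x \in U_0$ it follows because both $s_X|_x = \tilde s|_x$ and $s_x$ map to $s_\eta$ in $\Fh_\eta$, and $\Fh_x \to \Fh_\eta$ is injective.

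I expect the main obstacle to be the surjectivity argument, and within it the two points that make the gluing work: the noetherian finiteness guaranteeing that only finitely many codimension-one points lie outside a chosen dense open, and the propagation of agreement from the generic point to the full overlaps via \ref{unr1}. Once those overlaps are shown to match, the Zariski sheaf axiom and the codimension-$\le 1$ extension property in \ref{unr2} do the rest essentially formally.
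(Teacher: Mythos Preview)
Your proof is correct and follows essentially the same route as the paper: reduce to the connected (integral) case via \ref{unr0}, get injectivity from \ref{unr1} by factoring through $\Fh_\eta$, and for surjectivity represent the stalks on opens, use \ref{unr1} to force agreement on overlaps, glue via the Zariski sheaf property, and then extend from a codimension-$\le 1$ open to all of $X$ using \ref{unr2}. The only cosmetic difference is that the paper chooses an open $U_x$ for every point $x \in X^{(\le 1)}$ and glues all of them at once, whereas you fix one dense open $U_0$ for the generic point and then treat only the finitely many codimension-one points outside $U_0$; this is the same argument with a slightly different bookkeeping.
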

\begin{proof}
  To keep notation short, we abuse notation slightly and write $\Fh$ for the
  presheaf, as well as for its extension $\Fh\ess$ to $\Sch(S)\ess$.  Since
  $\Fh\ess(Y) = \Fh(Y)$ for all $Y ∈ \Sch(S)$, no confusion is likely to
  occur.  Using the Axiom~\ref{unr0}, we can restrict ourselves to the case
  where $X$ is a connected, regular scheme.  Given $X$, we aim to show that the
  following canonical map is an isomorphism,
  $$
  \Fh(X) → \varprojlim_{x ∈ X^{(≤ 1)}} \Fh \bigl(\Spec\, \sO_{X, x} \bigr).
  $$

  \subsubsection*{Injectivity}

  Axiom~\ref{unr1} implies that $\Fh(X) → \Fh(η)$ is injective, where $η$ is the
  generic point of $X$.  Since this factors as
  $$
  \Fh(X) → \varprojlim_{x ∈ X^{(≤ 1)}} \Fh \bigl(\Spec\, \sO_{X, x} \bigr)
  \xrightarrow{\text{canon.~projection}} \Fh(η),
  $$
  we obtain that the first map is injective.

  \subsubsection*{Surjectivity}

  Assume we are given a section
  $$
  (s_x)_{x ∈ X^{(≤ 1)}} \quad \text{of} \quad \varprojlim_{x ∈ X^{(≤ 1)}} \Fh
  \bigl( \Spec\, \sO_{X, x} \bigr).
  $$
  By definition of the groups $\Fh \bigl( \Spec\, \sO_{X, x} \bigr)$, for every
  $x ∈ X^{(≤ 1)}$ there is some open $U_x ⊂ X$ containing $x$ and an element
  $t_x ∈ \Fh(U_x)$ which represents $s_x$.  Furthermore, by the required
  coherency, for every $x ∈ X$ of codimension one there is an open subscheme
  $U_{xη}$ of $U_x∩ U_{η}$ such that the restrictions of $t_x$ and $t_{η}$ to
  $U_{xη}$ agree.  By \ref{unr1}, this means that we actually have
  $$
  t_x|_{U_x ∩ U_y} = t_y|_{U_x ∩ U_y} \quad \text{for each } x, y ∈ X \text{ of codimension one}.
  $$
  Since $\Fh$ is a Zariski sheaf by \ref{unr2}, the sections $t_x$ therefore
  lift to a section $t$ on $∪_{x ∈ X^{(1)}} U_x$, but by \ref{unr2}, we have
  $\Fh \bigl(∪_{x ∈ X^{(1)}} U_x\bigr) = \Fh(X)$.  So the map is surjective.
\end{proof}

\begin{lemm}[{cf.~\cite[Proof of 3.6.12]{Kel12}}]\label{lemm:regcase}
  In the setting of Proposition~\ref{prop:altDefrs}, if $X$ is connected,
  regular and Noetherian with generic point $η$, then the projection map
  \begin{equation}\label{equa:regCurLimProjectToGen}
    \varprojlim_{W ∈ \Cur(X)} \Fh\ess(W) → \Fh\ess(η)
  \end{equation}
  is injective.  Consequently, Proposition~\ref{prop:altDefrs} is true when $X$
  is regular.
\end{lemm}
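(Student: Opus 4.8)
The plan is to prove the injectivity statement first and then deduce the final sentence formally. For the injectivity, I would take a family $(s_W)_{W ∈ \Cur(X)}$ in $\varprojlim_{W ∈ \Cur(X)} \Fh\ess(W)$ whose component $s_η$ at the generic point vanishes, and show that $s_W = 0$ for every $W$. Every object of $\Cur(X)$ has the form $\Spec R$ with $R$ a field or a discrete valuation ring, and the argument is a dévissage: first reduce discrete valuation rings to their fraction fields, then reduce arbitrary field objects to residue fields of points of $X$, and finally induct on codimension.

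The two reductions go as follows. If $R$ is a discrete valuation ring with fraction field $\Frac R$, then $\Spec \Frac R → \Spec R$ is a dense open immersion and $\Spec \Frac R$ again lies in $\Cur(X)$; passing Axiom~\ref{unr1} through the filtered colimits defining $\Fh\ess$—using that filtered colimits of injections are injective—shows that $\Fh\ess(\Spec R) → \Fh\ess(\Spec \Frac R)$ is injective. Since compatibility in the limit identifies $s_{\Spec \Frac R}$ with the image of $s_{\Spec R}$, it suffices to kill the components at field objects. If $\Spec K ∈ \Cur(X)$ is such an object whose structure morphism hits a point $p ∈ X$, then the resulting embedding $k(p) → K$ factors that morphism as $\Spec K → \Spec k(p) → X$ in $\Cur(X)$, so $s_{\Spec K}$ is the image of $s_{\Spec k(p)}$, and it is enough to prove $s_{\Spec k(p)} = 0$ for all $p ∈ X$.

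For the induction on the codimension $n$ of $p$, the base case $n = 0$ is immediate since $X$ is connected, so $p = η$ and $s_{\Spec k(η)} = s_η = 0$. For $n ≥ 1$, I would invoke Lemma~\ref{lem:dominateByR} to produce a point $y$ of codimension $n - 1$ and a discrete valuation ring $R$ essentially of finite type over $X$ with $\Spec R → X$ carrying the closed point to $p$ and the generic point to $y$, each an isomorphism on residue fields. Then $\Spec R ∈ \Cur(X)$, and the closed- and generic-point inclusions give morphisms $\Spec k(p) → \Spec R$ and $\Spec k(y) \cong \Spec \Frac R → \Spec R$ in $\Cur(X)$, so that $s_{\Spec k(p)}$ and $s_{\Spec k(y)}$ are the two images of $s_{\Spec R}$. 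By the inductive hypothesis $s_{\Spec k(y)} = 0$, and since $\Fh\ess(\Spec R) → \Fh\ess(\Spec \Frac R)$ is injective by the first reduction, this forces $s_{\Spec R} = 0$ and hence $s_{\Spec k(p)} = 0$, closing the induction and establishing injectivity.

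To deduce the consequence, note that $X^{(\leq 1)}$ is a subcategory of $\Cur(X)$, so restriction gives a map $r \colon \varprojlim_{W ∈ \Cur(X)} \Fh\ess(W) → \varprojlim_{x ∈ X^{(\leq 1)}} \Fh\ess(\Spec \sO_{X, x})$, whose target is $\Fh(X)$ by Lemma~\ref{lemm:limitCodimOne}. Composing $r$ with the injection $\Fh(X) → \Fh\ess(η)$ of Axiom~\ref{unr1} recovers the projection to $η$, which is injective by the above; hence $r$ is injective. On the other hand, the structure morphisms $W → X$ form a cone giving a canonical map $c \colon \Fh(X) = \Fh\ess(X) → \varprojlim_{W ∈ \Cur(X)} \Fh\ess(W)$, and $r \circ c = \mathrm{id}$ by Lemma~\ref{lemm:limitCodimOne}. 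An injection admitting a section is an isomorphism, so $\Fh(X) \cong \varprojlim_{W ∈ \Cur(X)} \Fh\ess(W)$; together with $\Fh_{\rs}(X) = \Fh(X)$ from Remark~\ref{rema:firstRemarks:regularSame}, this is exactly Proposition~\ref{prop:altDefrs} for regular $X$. The main obstacle is the bookkeeping of the first reduction—checking that Axiom~\ref{unr1} genuinely descends to the pro-objects $\Spec R$ and $\Spec \Frac R$ through the colimits of $\Fh\ess$—together with verifying that the maps supplied by Lemma~\ref{lem:dominateByR} really are morphisms in $\Cur(X)$ with the stated effect on the components $s_W$.
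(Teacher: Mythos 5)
Your argument is correct and follows essentially the same route as the paper's proof: the same induction on codimension via Lemma~\ref{lem:dominateByR}, the same use of Axiom~\ref{unr1} to pass from a discrete valuation ring to its fraction field, the same reduction of general objects of $\Cur(X)$ to residue fields of points of $X$, and the same deduction of the consequence by factoring through Lemma~\ref{lemm:limitCodimOne}. The only differences are cosmetic (you phrase injectivity via vanishing of a kernel and present the reductions before the induction, and you spell out more carefully than the paper does that \ref{unr1} survives the filtered colimits defining $\Fh\ess$).
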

\begin{proof}
  As before, write $\Fh$ as a shorthand for $\Fh\ess$.  Recall from
  Remark~\ref{rema:firstRemarks:regularSame} that when $X$ is regular, $\Fh(X) =
  \Fh_{\rs}(X)$.  The composition
  $$
  \Fh(X) → \varprojlim_{W ∈ \Cur(X)} \Fh(W) → \varprojlim_{x ∈ X^{(≤ 1)}} \Fh \bigl( \Spec\, \sO_{X, x} \bigr)
  $$
  is the map we have shown is an isomorphism in Lemma~\ref{lemm:limitCodimOne}.
  So to show that the first map is an isomorphism, it suffices to show that the
  second map is a monomorphism.  Since \eqref{equa:regCurLimProjectToGen}
  factors through this, it suffices to show that
  \eqref{equa:regCurLimProjectToGen} is injective.

  Assume we are given two sections
  $$
  (s_W)_{W ∈ \Cur(X)}, \: (t_W)_{W ∈ \Cur(X)} \quad\text{of }
  \varprojlim_{W ∈ \Cur(X)} \Fh(W)
  $$
  such that $s_{η} = t_{η}$.  We wish to show that $t_W = s_W$ for all $W ∈
  \Cur(X)$.

  First, we consider $W$ of the form a point $x ∈ X$.  This is by induction on
  the codimension.  We already know $s_{η} = t_{η}$ for the point $η$ of
  codimension zero, so suppose it is true for points of codimension at most
  $n-1$ and let $x$ be of codimension $n$.  By Lemma~\ref{lem:dominateByR} there
  is a point $y$ of codimension at most $n-1$ and a discrete valuation ring $R$
  satisfying the diagram
  $$
  \xymatrix@R=0pt{
    y \ar[dr] & \\
    & \Spec\, R \ar[r] & X \\
    x \ar[ur]
  }
  $$
  By the inductive hypothesis $s_y = t_y$ and then by \ref{unr1} we have
  $s_{\Spec\, R} = t_{\Spec\, R}$.  Therefore $s_{\Spec\, R}|_x = t_{\Spec\,
    R}|_x$ but by the coherency requirement on $s$ and $t$ and we have
  $s_{\Spec\, R}|_x = s_x$ and $t_{\Spec\, R}|_x = t_x$ and therefore $t_x =
  s_x$.
  
  Now for an arbitrary $W → X$ in $\Cur(X)$ with $W$ of dimension zero, if $x$
  is the image of $W$ we have $t_W = t_x|_W = s_x|_W = s_W$.  For an arbitrary
  $W$ of dimension one and generic point $\widetilde{η}$ we have
  $t_W|_{\widetilde{η}} = t_{\widetilde{η}} = s_{\widetilde{η}} =
  s_Y|_{\widetilde{η}}$ and so by \ref{unr1} we have $t_W = s_W$.
\end{proof}

\addtocounter{thm}{1}\setcounter{equation}{0}
\begin{proof}[Proof of Proposition~\ref{prop:altDefrs}]\label{proof:prop:altDefrs}
  Again we write $\Fh$ as a shorthand for $\Fh\ess$.  By
  Lemma~\ref{lemm:regcase} we have
  $$
  \Fh_\rs(X) = \varprojlim_{Y ∈ \Reg(X)} \Fh(Y) = \varprojlim_{Y ∈ \Reg(X)}
  \varprojlim_{W ∈ \Cur(Y)} \Fh(W).
  $$
  Investigating this last double limit carefully, we observe that it can be
  described as sequences $(s_{W → Y → X})$ indexed by pairs of composable
  morphisms $W → Y → X$ with $W ∈ \Cur(X)$ and $Y ∈ \Reg(X)$, and subject to the
  following two conditions:
  \begin{enumerate}
  \item For each $W' → W → Y → X$ with $W', W ∈ \Cur(X)$ and $Y ∈ \Reg(X)$ we
    have $s_{W' → Y → X} = s_{W → Y → X}$.

  \item For each $W → Y' → Y → X$ with $W ∈ \Cur(X)$ and $Y, Y' ∈ \Reg(X)$ we
    have $s_{W → Y → X} = s_{W → Y' → X}$.
  \end{enumerate}
  By forgetting $Y$ we get a natural map
  $$
  α:\varprojlim_{W→ X}\Fh(W)→ \varprojlim_{W→ Y→ X}\Fh(W)
  $$
  with $W$, $Y$ as before.  It is given by sending an element
  $$
  (t_{W → X})_{\Cur(X)} ∈ \varprojlim_{W ∈ \Cur(X)} \Fh(W)
  $$
  to the element
  $$
  s ∈ \varprojlim_{W→ Y→ X}\Fh(W) \quad \text{with $s_{W → Y → X} = t_{W → X}$.}
  $$

  In order to check that $α$ is surjective, we only need to check that for
  any
  $$
  s ∈ \varprojlim_{W→ Y→ X}\Fh(W),
  $$
  the section $s_{W→ Y→ X}$ is independent of $Y$.  Let $W→ Y_1→ X$ and $W→ Y_2→
  X$ be in the index system.  By Example~\ref{ex:cofinal}, $W$ is the
  intersection of its affine open neighbourhoods in some $Y_3 ∈
  \Sch(S)$.  Since the regular points of a scheme in $\Sch(S)$ form an open set,
  we can assume that $Y_3 ∈ \Reg(S)$.  By the description of morphisms of
  schemes essentially of finite type in Lemma~\ref{lem:mor_ft} there exists an
  open affine subscheme $U$ of $Y_3$ containing $W$ such that we can lift the
  morphisms $W → Y_1, Y_2$ to morphisms $U → Y_1, Y_2$.  Hence
  $$
  s_{W→ Y_1→ X}=s_{W→ U→ X}=s_{W→ Y_2→ X}.
  $$

  To show that $α$ is a monomorphism, it suffices to notice that each $W ∈
  \Cur(X)$ can be ``thickened'' to a $Y ∈ \Reg(X)$: By Example~\ref{ex:cofinal}
  is $W$ is of the form $\bigcap U$ for $U$ running through the open
  neighbourhoods of $W$ in some $Y' ∈ \Sch(X)$.  Let $Y$ be the open subscheme
  of regular points of $Y'$.  We claim that $W→ Y'$ factors via $Y$.  The
  generic point of $W$ maps to a generic point of $Y'$, hence $Y$ is non-empty.
  If $W$ is of dimension $0$, we are done.  If $W$ is of dimension one, consider
  the image $y'∈ Y'$ of the closed point of $W$.  Then $W=\Spec\Oh_{Y',y'}$ and
  $y'$ is a regular point because $W$ is regular.  In particular, $W$ is
  contained in $Y$.  Hence, the functor of indexing categories which sends $W→
  Y→ X$ to $W → X$ is essentially surjective, and therefore the induced
  morphism $α$ of limits is injective.
 \end{proof}

\subsection{Descent properties}
\approvals{
  Annette & yes \\
  Shane & yes\\
  Stefan & yes
}

Recall the notion of a $\cdp$-cover from Definition~\ref{defi:cdp} and the
$\rh$, $\cdh$ and $\eh$-topologies generated by $\cdp$-covers together with open
covers, resp.\ Nisnevich covers, resp.\ étale covers.

\begin{propo}\label{lemm:isrh}
  Let $S$ be a Noetherian scheme.  If $\Fh$ is an unramified presheaf on
  $\Sch(S)$ then $\Fh_{\rs}$ is an $\rh$-sheaf.  In particular, if $\Fh$ is an
  unramified Nisnevich (resp.~étale) sheaf on $\Sch(S)$ then $\Fh_{\rs}$ is a
  $\cdh$-sheaf (resp.~$\eh$-sheaf).
\end{propo}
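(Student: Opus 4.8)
The plan is to reduce $\rh$-descent to descent along the two generators of the $\rh$-topology, Zariski covers and $\cdp$-morphisms, and to treat each separately. An unramified presheaf is in particular a Zariski sheaf by Axiom~\ref{unr2}, so Lemma~\ref{lemm:issheaf}, applied with $\tau$ the Zariski topology, already gives that $\Fh_{\rs}$ is a Zariski sheaf. Both Zariski covers and $\cdp$-morphisms are stable under base change (properness and complete decomposition are preserved by pullback) and each class is closed under composition, so together they generate a pretopology whose covering families are, up to refinement, composites of the two types; this is the content of the normal form in Corollary~\ref{cor:normalform}. Since the sheaf property is transitive along composable covers, it therefore suffices to show that $\Fh_{\rs}$ satisfies descent along a single $\cdp$-morphism $f : Y \to X$, i.e.\ that
\[
\Fh_{\rs}(X) \to \Fh_{\rs}(Y) \rightrightarrows \Fh_{\rs}(Y \times_X Y)
\]
is an equalizer.

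The geometric heart is that every object of $\Cur(X)$ lifts through $f$. Let $W \to X$ lie in $\Cur(X)$, so $W = \Spec\, R$ with $R$ a field or a discrete valuation ring. If $R$ is a field, $W$ maps to a single point $x \in X$, and complete decomposition (Definition~\ref{defi:cdp}) provides $y \in Y$ over $x$ with $k(y) = k(x)$; the isomorphism $\Spec\, k(y) \xrightarrow{\sim} \Spec\, k(x)$ then yields a lift $W \to Y$ over $X$. If $R$ is a discrete valuation ring, I first lift its generic point as in the field case and then extend to $\Spec\, R \to Y$ over $X$ by the valuative criterion of properness for $f$. Combining this with the reconstruction of $\Fh_{\rs}$ from $\Cur(X)$ in Proposition~\ref{prop:altDefrs}, a section of $\Fh_{\rs}(X)$ is a compatible family $(s_W)_{W \in \Cur(X)}$ with $s_W \in \Fh\ess(W)$; the existence of lifts shows immediately that $\Fh_{\rs}(X) \to \Fh_{\rs}(Y)$ is injective, since $s|_Y = 0$ forces $s_W = 0$ for every $W \to X$.

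For surjectivity onto the equalizer I would take $t \in \Fh_{\rs}(Y)$ with $p_1^* t = p_2^* t$ in $\Fh_{\rs}(Y \times_X Y)$ and define a candidate $s = (s_W)_{W \in \Cur(X)}$ by choosing for each $W \to X$ a lift $\ell : W \to Y$ and setting $s_W := t_\ell$, where $t_\ell \in \Fh\ess(W)$ is the component of $t$ at $\ell \in \Cur(Y)$. The crux is that $s_W$ must not depend on $\ell$: two lifts $\ell_1, \ell_2$ of the same $W \to X$ assemble into a morphism $g = (\ell_1, \ell_2) : W \to Y \times_X Y$ lying in $\Cur(Y \times_X Y)$, with $p_1 \circ g = \ell_1$ and $p_2 \circ g = \ell_2$, whence $t_{\ell_1} = (p_1^* t)_g = (p_2^* t)_g = t_{\ell_2}$, the middle equality being precisely the equalizer hypothesis. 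This is where the cocycle condition is used, and I expect pinning it down exactly to be the main obstacle. Compatibility of the family $(s_W)$ under morphisms $W' \to W$ in $\Cur(X)$ follows by precomposing a lift of $W$ with $W' \to W$ and invoking compatibility of $t$, and $s|_Y = t$ holds because $\ell = (W \to Y)$ is itself an admissible lift of $W \to Y \to X$.

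With $\rh$-descent established, the remaining assertions are immediate. If $\Fh$ is moreover a Nisnevich (resp.\ étale) sheaf, Lemma~\ref{lemm:issheaf} shows that $\Fh_{\rs}$ is a Nisnevich (resp.\ étale) sheaf; since the $\cdh$-topology (resp.\ $\eh$-topology) is generated by Nisnevich (resp.\ étale) covers together with $\cdp$-morphisms (Remark~\ref{rem:defi:rhetc}), the same transitivity argument upgrades $\Fh_{\rs}$ to a $\cdh$-sheaf (resp.\ $\eh$-sheaf).
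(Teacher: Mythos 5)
Your proposal is correct and follows essentially the same route as the paper: reduce to $\cdp$-morphisms via Lemma~\ref{lemm:issheaf}, use the description of $\Fh_{\rs}$ by $\Cur(X)$ from Proposition~\ref{prop:altDefrs}, lift objects of $\Cur(X)$ through a $\cdp$-morphism by complete decomposition plus the valuative criterion, and kill the dependence on the choice of lift by packaging two lifts into a map $W \to Y \times_X Y$ and invoking the cocycle condition. The step you flagged as the likely obstacle is in fact already handled by exactly that last observation, which is also how the paper does it.
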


\begin{exam}\label{exam:ocdhs}
  Let $S=\Spec\, k$ with $k$ perfect.  Then $Ω_{\rs}^n$ is an $\eh$-sheaf for
  all~$n$.  In particular, $Ω_{\rs}^n$ is a $\cdh$-sheaf since this is a weaker
  topology.
\end{exam}

\begin{proof}[Proof of Proposition \ref{lemm:isrh}]
  By Lemma~\ref{lemm:issheaf}, $\Fh_{\rs}$ is a Zariski-sheaf (resp.\ étale or
  Nisnevich sheaf).  It remains to establish the sheaf property for
  $\cdp$-morphisms.  As before, write $\Fh$ as a shorthand for $\Fh\ess$, in
  order to simplify notation.
  
  Using the description $\Fh_{\rs}(X) = \varprojlim_{W ∈ \Cur(X)} \Fh(W)$ from
  Proposition~\ref{prop:altDefrs}, in order to show that $\Fh_{\rs}$ is
  separated for $\cdp$-morphisms, it suffices to show that for every
  $\cdp$-morphism $X' → X$, every $W → X$ in $\Cur(X)$ factors through $X' → X$.
  Since $X' → X$ is completely decomposed it is true for $W$ of dimension zero,
  and therefore also true for the generic points of those $W$ of dimension one.
  To factor all of a $W$ of dimension one, we now use the valuative criterion
  for properness.
  
  Now consider a $\cdp$-morphism $X' → X$ and a cocycle
  $$
  s = (s_W)_{W ∈ \Cur(X')} ∈ \ker \Bigl( \Fh_{\rs}(X') → \Fh_{\rs}(X' ⨯_X X') \Bigr).
  $$
  We have just observed that every scheme in $\Cur(X)$ factors through $X'$
  and so making a choice of factorisation for each $W → X$ in $\Cur(X)$, and
  taking $t_W$ to be the $s_W$ of this factorisation, we have a potential
  section $t = (t_W)_{W ∈ \Cur(X)} ∈ \Fh_{\rs}(X)$, which potentially maps to
  $s$.
  
  Independence of the choice: Suppose that $f_0, f_1: W \rightrightarrows X' →
  X$ are two factorisations of some $W ∈ \Cur(X)$.  Then there is a unique
  morphism $W → X' ⨯_X X'$ such that composition with the two projections
  recovers the two factorisations.  Saying that $s$ is a cocycle is to say
  precisely that in this situation, the two $s_W$ corresponding to $f_0$ and
  $f_1$ are equal as elements of $\Fh(W)$.
  
  This independence of the choice implies that $t$ is actually a section of
  $\Fh_{\rs}(X)$.  In other words, that $s_{W}|_{W'} = s_{W'}$ for any morphism
  $W' → W$ in $\Cur(X)$.  It also implies that $t$ is mapped to $s$.
\end{proof}

%
% Do not edit the following line.  The text is automatically updated by
% subversion.
%
\svnid{$Id: 05.tex 246 2015-02-23 09:33:16Z kebekus $}

% !TEX root = sdh.tex

\section{$\cdh$-differentials}
\subversionInfo
\label{section:rs-diff}
\approvals{
  Annette & yes\\
  Shane & yes\\
  Stefan & yes
}

Throughout the present Section~\ref{section:rs-diff}, the letter $k$ will always
denote a perfect field.  To keep our statements self-contained, we will repeat
this assumption at times.

\begin{rema}[Perfect fields]\label{rem:perfect}
  Perfect fields have a number of characterisations.  A pertinent one for us is:
  the field $k$ is perfect if and only if $Ω¹_{L / k} = 0$ for every algebraic
  extension $L / k$.
\end{rema}

\begin{defi}[Dvr differentials]\label{defi:rs-differentials}
  Let $k$ be a perfect field and $n ∈ \bN$ be any number.  Let $Ω^n_{\rs}$ be
  the extension of the presheaf $Ω^n$ on $\Reg(k)$ to $\Sch(k)$ in the sense of
  Definition~\ref{defi:rs1}.  A section of $Ω^n_{\rs}$ is called a \emph{dvr
    differential}.  The justification for this name is
  Proposition~\ref{prop:altDefrs}.
\end{defi}

\begin{obse}\label{obs:oisehs}
  Let $k$ be perfect field and $n ∈ \bN$ be any number.  Then $Ω^n_{\rs}$ is an
  $\eh$-sheaf.  If $X$ is regular, then $Ω^n_{\rs}(X)=Ω^n(X)$.
\end{obse}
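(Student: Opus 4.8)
The plan is to derive both assertions from results already in place, so that essentially no new work is needed. The second statement is immediate: if $X$ is regular then $X$ is a final object of the category $\Reg(X)$, so the inverse system defining $\Omega^n_{\rs}(X)=\varprojlim_{Y\in\Reg(X)}\Omega^n(Y)$ has a terminal object and the limit collapses to $\Omega^n(X)$. This is precisely the content of Remark~\ref{rema:firstRemarks}\eqref{rema:firstRemarks:regularSame} applied to $\Fh=\Omega^n$, so the identity $\Omega^n_{\rs}(X)=\Omega^n(X)$ requires no further argument.

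For the first statement the strategy is to place ourselves in a position to invoke the ``étale'' half of Proposition~\ref{lemm:isrh}: if $\Omega^n$ is an \emph{unramified étale sheaf} on $\Sch(k)$, then $\Omega^n_{\rs}$ is an $\eh$-sheaf. Two properties of $\Omega^n$ thus need checking. First, the unramified property is recorded in Example~\ref{ex:unramified}, item~(3): over a perfect field the presheaves $\Omega^n$ satisfy the axioms \ref{unr0}--\ref{unr2}. Second, I would verify that the global-sections presheaf $X\mapsto\Omega^n_{X/k}(X)$ is an étale sheaf on $\Sch(k)$. Here I would use compatibility of $\Omega^n$ with étale base change: for an étale morphism $f:U\to X$ the conormal sequence gives $\Omega^1_{U/X}=0$ and hence a canonical isomorphism $f^*\Omega^1_{X/k}\cong\Omega^1_{U/k}$, which upon taking $n$-th exterior powers (these commute with pullback) yields $f^*\Omega^n_{X/k}\cong\Omega^n_{U/k}$. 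Consequently the values of $\Omega^n$ on an étale cover $\{U_i\to X\}$ are exactly the sections of the pullback of the quasi-coherent $\sO_X$-module $\Omega^n_{X/k}$, and the required sheaf condition is nothing but faithfully flat (here étale) descent for this quasi-coherent module.

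With these two inputs in hand, Proposition~\ref{lemm:isrh} applies verbatim and delivers that $\Omega^n_{\rs}$ is an $\eh$-sheaf, completing the argument. I do not expect a genuine obstacle at any step: the unramifiedness is already cited, and descent for quasi-coherent sheaves is standard. The only point meriting a little care is the distinction between the small-site Zariski sheaf $\Omega^n_X$ and the global-sections presheaf $X\mapsto\Omega^n_{X/k}(X)$ on the large category $\Sch(k)$; it is the latter that must satisfy étale descent, and this follows precisely from the base-change isomorphism $f^*\Omega^n_{X/k}\cong\Omega^n_{U/k}$ noted above.
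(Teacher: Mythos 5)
Your proposal is correct and follows the same route as the paper: the paper's proof likewise cites Example~\ref{ex:unramified} for unramifiedness and then invokes Proposition~\ref{lemm:isrh} and Remark~\ref{rema:firstRemarks}\eqref{rema:firstRemarks:regularSame}. The only difference is that you explicitly verify the étale-sheaf property of $\Omega^n$ (via $f^*\Omega^n_{X/k}\cong\Omega^n_{U/k}$ and faithfully flat descent), a hypothesis of Proposition~\ref{lemm:isrh} that the paper leaves implicit; this is a welcome but minor addition.
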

\begin{proof}
  Recall from Example~\ref{ex:unramified} that $Ω^n$ is an unramified presheaf.
  The observations are therefore special cases of Proposition~\ref{lemm:isrh}
  and Remark~\ref{rema:firstRemarks:regularSame}, respectively.
\end{proof}

\begin{rema}
  If the characteristic of $k$ is zero, the description as dvr differentials is
  one of the equivalent characterisations of $\h$-differentials given in
  \cite[Theorem~1]{HJ}, without the terminology being introduced.
\end{rema}

\subsection{$\cdh$-differentials}
\approvals{
  Annette & yes \\
  Shane & yes\\
  Stefan & yes
}

We introduce an alternative candidate for a good theory of differentials and
show that it agrees with $Ω^n_{\rs}$, if we assume that weak resolutions of
singularities exist.

\begin{defi}[$\cdh$-differentials]\label{def:cdhdiffs}
  Let $k$ be a perfect field and $n ∈ \bN$ be any number.  Let $Ω^n_{\cdh}$ be
  the sheafification of $Ω^n$ on $\Sch(k)$ in the $\cdh$-topology.  Sections of
  $Ω^n_{\cdh}$ are called $\cdh$-differentials.  Analogously, we define
  $Ω^n_{\rh}$ and $Ω^n_{\eh}$ via the $\rh$- or $\eh$-topology.
\end{defi}

\begin{rema}[Sheafification in characteristic zero]
  If $k$ has characteristic zero, then $Ω^n_{\cdh}$, $Ω^n_{\rh}$ and $Ω^n_{\eh}$
  agree.  In fact, $Ω^n_{\cdh}$ is even an $\h$-sheaf, \cite[Theorem~3.6]{HJ}.
\end{rema}

\begin{rema}[Comparison map]
  In the setting of Definition~\ref{def:cdhdiffs}, recall from
  Observation~\ref{obs:oisehs} that $Ω^n_{\rs}$ is an $\eh$-sheaf.  By the
  universal property of sheafification, there are canonical morphisms
  $$
  Ω^n_{\rh} → Ω^n_{\cdh} → Ω^n_{\eh} → Ω^n_{\rs}.
  $$
\end{rema}

We aim to compare these sheaves.  As it will turn out, the comparison is a
question about torsion.  The decisive input is the following theorem:

\begin{thm}[Killing torsion in $Ω$]\label{hypH}
  Let $k$ be a perfect field, $Y∈\Sch(k)$ integral, $n∈ \bN$ and
  \mbox{$ω∈\torΩ^n(Y)$}.  Then there there is a birational proper morphism $π:
  \wtilde{Y} → Y$ such that the image of $ω$ in $Ω^n(\wtilde{Y})$ vanishes.
\end{thm}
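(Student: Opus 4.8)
The plan is to reduce to an explicit affine model, make the torsion visible through a presentation, and then produce a blow-up that turns the defining relation into an identity of \emph{regular} forms; the guiding example is the Whitney umbrella of Example~\ref{exam:pullbackTorsionNotTorsion}, where the torsion form $dx$ becomes $d(w^2)=0$ as soon as the rational function $w=y/z$ is made regular by one blow-up. First I would reduce to $Y=\Spec A$ affine. After replacing $Y$ by its normalisation---which is finite, hence proper, and birational, and under which a torsion form pulls back to a torsion form (birational maps match generic points)---I may assume $Y$ normal. I would also record that $\omega$ is supported on the singular locus: over the perfect field $k$ the regular locus $Y_{\mathrm{reg}}$ is smooth, so $\Omega^n_{Y_{\mathrm{reg}}}$ is locally free, hence torsion-free, and $\omega|_{Y_{\mathrm{reg}}}=0$.

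To make the torsion explicit, choose a closed immersion $Y\hookrightarrow\bA^m_k$ with ideal $I=(f_1,\dots,f_r)$ and lift $\omega$ to $\tilde\omega=\sum_{|J|=n}a_J\,dx_J$ with $a_J\in A$. Writing $K=\Frac(A)$, the hypothesis $\omega\in\tor\Omega^n(Y)$ says exactly that $\tilde\omega$ dies in $\Omega^n_{K/k}$, i.e.\ there are $(n-1)$-forms $\eta_1,\dots,\eta_r$ with coefficients in $K$ such that $\tilde\omega=\sum_j df_j\wedge\eta_j$ in $\bigwedge^n\bigl(\bigoplus_i K\,dx_i\bigr)$. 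Since each $f_j$ vanishes on $Y$, its differential pulls back to zero on any model dominating $Y$; the sole obstruction to concluding $\omega=0$ is that the $\eta_j$ are merely rational. The target is therefore a proper birational $\pi:\wtilde Y\to Y$ on which this relation becomes an identity of regular forms, giving $\pi^*\omega=\sum_j d(\pi^{\#}f_j)\wedge\pi^*\eta_j=0$.

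The hard point is that one cannot make the $\eta_j$ regular by simply resolving the indeterminacy of their coefficients: a rational function with a genuine pole remains singular under every proper birational modification. What saves the situation is the characteristic-$p$ cancellation seen above ($d(w^p)=0$) rather than pole-removal, and I expect the clean way to harness it is an induction on $\dim Y$ combined with the hyperplane-section criterion advertised in the introduction. The base case $\dim Y\le 1$ is immediate: a normal curve is regular, so $\Omega^n$ is torsion-free there and $\omega$ already vanishes after normalisation, while in dimension $0$ one has $\Omega^n=0$ for $n\ge 1$ since $k$ is perfect (Remark~\ref{rem:perfect}).

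For the inductive step I would restrict $\omega$ to a general hyperplane section $Y_H$, which by a Bertini argument is again integral and normal of dimension $\dim Y-1$ and to which $\omega$ restricts as a torsion form; the inductive hypothesis then yields a modification of $Y_H$ on which $\omega|_{Y_H}$ dies. The genuine obstacle---and the place where I expect flattening-by-blow-up, as used in Lemma~\ref{lem:makecover}, and the valuation-theoretic input of Gabber--Ramero to enter---is to upgrade this family of modifications of slices into a single proper birational $\pi:\wtilde Y\to Y$ on which $\pi^*\omega$ vanishes globally. Here the hyperplane-section criterion is exactly the device that converts ``$\pi^*\omega$ restricts to zero on enough sections'' into the desired conclusion ``$\pi^*\omega=0$''.
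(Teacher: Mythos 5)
Your write-up correctly isolates the crux --- one cannot kill the poles of the rational $(n-1)$-forms $\eta_j$ by a proper birational modification, so some other mechanism must produce the model on which $\pi^*\omega$ dies --- but it does not supply that mechanism, and the one you sketch does not close. In the inductive step you obtain, for each general hyperplane section $Y_H$, a modification $\wtilde{Y_H}\to Y_H$ killing $\omega|_{Y_H}$; but the hyperplane-section criterion (Lemma~\ref{lemm:K2-1}) compares $\supp\sigma\cap H$ with $\supp\sigma_H$ \emph{on one and the same scheme}, so to exploit it you would need the slices of a single modification $\wtilde Y\to Y$ to coincide with the modifications $\wtilde{Y_H}$ produced by induction --- and a modification of a hyperplane section is in general not a hyperplane section of a modification. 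Assembling the slice-wise data into one global $\pi:\wtilde Y\to Y$ is precisely the step you defer to ``flattening-by-blow-up and the valuation-theoretic input of Gabber--Ramero'', and no argument is given there. (The criterion also carries hypotheses --- $\dim\ge 3$, $H$ not in the singular locus, normality --- that would have to be tracked through the induction.) So as it stands the proposal is a plan with the decisive step missing.

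For comparison, the paper's proof avoids hyperplanes and induction on dimension entirely. It reduces Theorem~\ref{hypH} to the statement that $\Omega^n(R)\to\Omega^n(K)$ is injective for every $k$-valuation ring $R$ of a finitely generated extension $K/k$ (Hypothesis~\ref{hypo:valRing}); this is Gabber--Ramero \cite[Corollary~6.5.21]{GR03} for $n=1$, extended to all $n$ by Lemma~\ref{lem:torsion_in_powers} ($\bigwedge^n$ of a torsion-free module over a valuation ring is torsion-free). The bridge from ``$\omega$ dies in $\Omega^n(R)$ for every valuation ring $R\supseteq A$'' to ``$\omega$ dies on a single proper birational $\wtilde Y\to Y$'' is the quasi-compactness of the Riemann--Zariski space $\RZ(A)$: one extracts finitely many finitely generated sub-$A$-algebras $A_i\subset\Frac(A)$ on which $\omega$ vanishes and such that every valuation ring contains some $A_i$, compactifies each $\Spec A_i\to\Spec A$ by Zariski's Main Theorem, and takes the join of the compactifications, checking via the valuative criterion that the $\Spec A_i$ cover the result. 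That compactness-plus-ZMT device is exactly the globalisation step your proposal is missing, and it is also what handles the passage from an affine cover of $Y$ to $Y$ itself, which your first reduction takes for granted.
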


\begin{rema}
  The above result is a consequence of weak resolution of singularities, but in
  fact weaker.  We will give a direct proof in Appendix~\ref{sec:apphypH}.
\end{rema}

\begin{coro}\label{hypHCover}
  Let $k$ be a perfect field, $Y ∈ \Sch(k)$, $n ∈ \bN$ and $ω ∈ Ω^n(Y)$ an
  element such that $ω|_{y} = 0$ for every point $y ∈ Y$.  Then there exists a
  $\cdp$-morphism (Definition~\ref{defi:cdp}) $Y' → Y$ such that $ω|_{Y'} = 0$.
\end{coro}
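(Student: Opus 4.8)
The plan is to argue by Noetherian induction on $d := \dim Y$, using Theorem~\ref{hypH} to kill $ω$ on the top-dimensional part of $Y$ and recursing on a lower-dimensional closed subset. First I would reduce to the case where $Y$ is integral. The reduction morphism $Y_{\red} → Y$ and the morphism $\coprod_i Y_i → Y$ from the disjoint union of the reduced irreducible components of $Y_{\red}$ are both proper and completely decomposed, hence $\cdp$-morphisms (Definition~\ref{defi:cdp}); indeed every point of $Y$ lies on some component, and once $Y$ is reduced these morphisms induce isomorphisms on residue fields. Since a composition of $\cdp$-morphisms is again a $\cdp$-morphism, and since the hypothesis $ω|_y = 0$ is inherited by $Y_{\red}$ and by each $Y_i$ (their points are points of $Y$ with unchanged residue field, so $ω$ restricts to $0$ there), it suffices to produce the required cover for each integral component. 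Thus I may assume $Y$ is integral with generic point $η$.

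For such $Y$, the hypothesis at $η$ gives $ω|_η = 0$, so $ω$ vanishes on a dense open subscheme and is a torsion form, $ω ∈ \torΩ^n(Y)$ (Definition~\ref{defi:torsionF}). Theorem~\ref{hypH} then furnishes a proper birational morphism $π : \wtilde{Y} → Y$ with $π^*ω = 0$ in $Ω^n(\wtilde{Y})$. Let $U ⊆ Y$ be the largest open subscheme over which $π$ is an isomorphism. As $π$ is birational, $U$ is dense, so $Z := Y \setminus U$, endowed with its reduced structure, is a closed subscheme with $\dim Z < d$.

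Now I would invoke the inductive hypothesis on $Z$. Every point of $Z$ is a point of $Y$ with the same residue field, so the restriction of $ω$ to $Z$ again vanishes at every point of $Z$; since $\dim Z < d$, induction yields a $\cdp$-morphism $Z' → Z$ with $ω|_{Z'} = 0$. I claim that the morphism $Y' := \wtilde{Y} \amalg Z' → Y$, defined by $π$ on the first summand and by the composite $Z' → Z → Y$ on the second, is the sought-after $\cdp$-morphism. It is proper, being a finite disjoint union of proper morphisms. It is completely decomposed: a point $y ∈ U$ has a preimage in $\wtilde{Y}$ with trivial residue extension because $π$ is an isomorphism over $U$, while a point $y ∈ Z$ is hit with trivial residue extension by the completely decomposed morphism $Z' → Z$. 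Finally $ω$ pulls back to $(π^*ω, ω|_{Z'}) = (0,0)$, so $ω|_{Y'} = 0$. The base case $d = 0$ is immediate: an integral $Y$ of dimension zero is the spectrum of a field, on which a torsion form is already $0$, so the identity serves as the cover.

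The entire substance of the argument sits in Theorem~\ref{hypH}; everything else is bookkeeping about $\cdp$-morphisms. The step I would watch most carefully is the passage to the non-isomorphism locus $Z$: it is precisely the \emph{full} pointwise vanishing hypothesis — not merely the vanishing at the generic point that feeds Theorem~\ref{hypH} — that survives restriction to $Z$ and lets the induction close, and one must check that gluing the torsion-killing blow-up $π$ to the inductively obtained cover of $Z$ yields a morphism that is genuinely completely decomposed over all of $Y$, not just over $U$.
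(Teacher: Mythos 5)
Your proposal is correct and follows essentially the same route as the paper's proof: reduce to the integral case, apply Theorem~\ref{hypH} to get a proper birational $\wtilde{Y} → Y$ killing $ω$, and glue in an inductively obtained $\cdp$-cover of the closed nowhere dense locus where that morphism fails to be an isomorphism, the union $\wtilde{Y} \amalg Z' → Y$ being the desired $\cdp$-morphism. The extra care you take in verifying complete decomposedness and the preservation of the pointwise vanishing hypothesis under restriction to $Z$ is exactly the bookkeeping the paper leaves implicit.
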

\begin{proof}
  The proof is by induction on the dimension of $Y$.  If the dimension of $Y$ is
  zero then $Y_{\red} → Y$ is a $\cdp$-morphism such that $ω|_{Y_{\red}}$
  vanishes, so suppose that $Y$ is of dimension $n > 0$ and that the result is
  true for schemes of dimension $< n$.  Replacing $Y$ by its reduced irreducible
  components we can assume that $Y$ is integral.  Then Theorem~\ref{hypH} gives
  a proper birational morphism $Y' → Y$ for which $ω|_{Y'}$ vanishes.  Let $Z ⊂
  Y$ be a closed nowhere dense subscheme outside of which $Y' → Y$ is an
  isomorphism, and $Z' → Z$ a $\cdp$-morphism provided by the inductive
  hypothesis.  Then $Z' \amalg Y' → Y$ is a $\cdp$-morphism on which $ω$
  vanishes.
\end{proof}

\begin{thm}\label{prop:cdhDescentModTorsion}
  Let $k$ be a perfect field, $n ∈ \bN$ be any number and $X ∈ \Reg(k)$ any
  regular scheme.  Then the following canonical morphisms are all isomorphisms:
  \begin{equation}\label{eq:hjlb}
    Ω^n(X) \stackrel{\sim}{→} Ω^n_{\rh}(X) \stackrel{\sim}{→} Ω^n_{\cdh}(X) \stackrel{\sim}{→} Ω^n_{\eh}(X) \stackrel{\sim}{→} Ω^n_{\rs}(X).
  \end{equation}
\end{thm}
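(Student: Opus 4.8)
The plan is to reduce everything to a single statement, proved uniformly for each topology $\tau \in \{\rh, \cdh, \eh\}$: that the unit map $\Omega^n(X) \to \Omega^n_\tau(X)$ is an isomorphism whenever $X$ is regular. Granting this, the theorem follows by a short diagram chase. Observation~\ref{obs:oisehs} together with Remark~\ref{rema:firstRemarks:regularSame} shows that the total composite $\Omega^n(X) \to \Omega^n_{\rs}(X)$ in \eqref{eq:hjlb} is an isomorphism; combined with the main statement, the partial composites $\Omega^n(X) \to \Omega^n_{\rh}(X)$, $\Omega^n(X) \to \Omega^n_{\cdh}(X)$ and $\Omega^n(X) \to \Omega^n_{\eh}(X)$ are isomorphisms as well. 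Since any single arrow of the chain is recovered by comparing two consecutive such composites, every individual morphism in \eqref{eq:hjlb} is then an isomorphism on $X$. Using axiom~\ref{unr0} and the additivity of all sheaves involved over the finitely many connected components, I would first reduce to the case where $X$ is connected and regular, hence integral.

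For injectivity of $\Omega^n(X) \to \Omega^n_\tau(X)$, suppose $\omega \in \Omega^n(X)$ becomes zero in the sheafification, so that $\omega$ is killed after restriction to some $\tau$-cover. By the normal form for covers of a regular scheme (Corollary~\ref{cor:normalform}), this cover is refined by $\{Y'_i \to Y' \to X\}$ with $Y' \to X$ proper birational and $\{Y'_i \to Y'\}$ a Zariski-, Nisnevich- or étale cover; since $\Omega^n$ is a sheaf for the latter topology, $\omega|_{Y'} = 0$. As $Y' \to X$ is an isomorphism over a dense open $U \subseteq X$, this forces $\omega|_U = 0$, and axiom~\ref{unr1} (available because $\Omega^n$ is unramified, Example~\ref{ex:unramified}) yields $\omega = 0$.

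Surjectivity is the heart of the matter and is where Theorem~\ref{hypH} enters. Given $\sigma \in \Omega^n_\tau(X)$, I would, after refining and passing to normal form, represent $\sigma$ by data living on a proper birational $\cdp$-cover $p : Y' \to X$, with $Y'$ integral. To build a preimage on $X$ I would use the dvr description of Lemma~\ref{lemm:limitCodimOne}: for each point $x \in X$ of codimension $\le 1$ the scheme $\Spec\, \sO_{X,x}$ lies in $\Cur(X)$, and since $p$ is a $\cdp$-morphism (Proposition~\ref{prop:cdpRefinable}) the valuative criterion provides a lift $\Spec\, \sO_{X,x} \to Y'$, along which the local data pulls back to an element $\omega_x \in \Omega^n(\sO_{X,x})$. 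Independence of the chosen lift follows because two lifts agree at the generic point, where $p$ is an isomorphism, so the difference they produce is a form on a dvr vanishing generically, hence zero by~\ref{unr1}; the same comparison makes the family $(\omega_x)$ compatible, so Lemma~\ref{lemm:limitCodimOne} glues it to a section $\omega \in \Omega^n(X)$.

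It then remains to verify that $\omega$ maps to $\sigma$, for which I would produce an honest section $t \in \Omega^n(Y')$ representing $\sigma$ and check $p^*\omega = t$ in $\Omega^n_\tau(Y')$. Both $p^*\omega$ and $t$ restrict to the same element at the generic point of the integral scheme $Y'$, so their difference is a torsion form in $\tor \Omega^n(Y')$. This is exactly the situation handled by Theorem~\ref{hypH}: there is a proper birational morphism $\wtilde{Y'} \to Y'$ on which this difference vanishes, and such a morphism is a $\tau$-cover, so $p^*\omega$ and $t$ already agree in $\Omega^n_\tau(Y')$. The main obstacle is precisely this descent along $p$: extracting the genuine section $t$ on the possibly singular scheme $Y'$ from the a priori only $\tau$-locally matching data carried by $\sigma$, and then recognising the discrepancy $p^*\omega - t$ as a torsion form. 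Without the torsion-killing input of Theorem~\ref{hypH}, this last step would seem to require a resolution of singularities of $Y'$, which is exactly what the argument is designed to avoid.
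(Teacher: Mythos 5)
Your overall reduction, your injectivity argument, and your idea of using Theorem~\ref{hypH} to compare $p^*\omega$ with a representing section are all sound, but the surjectivity step has a genuine gap at exactly the point you flag and do not resolve: you never produce the ``honest section $t \in \Omega^n(Y')$''. What sheafification gives you, after passing to normal form, is a family of Kähler forms $t_i \in \Omega^n(Y'_i)$ on the members of a Zariski-, Nisnevich- or étale cover of $Y'$ that agree on the overlaps $Y'_i \times_{Y'} Y'_j$ only after mapping to $\Omega^n_\tau$. Since $Y'$ and these overlaps are in general singular, there is no separatedness statement for $\Omega^n \to \Omega^n_\tau$ available there, so the $t_i$ need not glue to a section of $\Omega^n$ on $Y'$; trying to force agreement via Theorem~\ref{hypH} only kills the differences after a further proper birational modification, which restarts the problem. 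A second, smaller issue: even granting $t$, your recipe for $\omega_x$ would in the Nisnevich and étale cases require lifting the non-henselian local scheme $\Spec\, \sO_{X,x}$ into a member of the étale part of the cover, which need not exist; the valuative criterion only supplies the lift to the proper birational part.

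The paper sidesteps both problems by exploiting the retraction that you mention only in passing. Since the composite $\Omega^n(X) \to \Omega^n_\tau(X) \to \Omega^n_{\rs}(X) = \Omega^n(X)$ is the identity, the unit map is split injective, and surjectivity is equivalent to the vanishing of $T := \ker\bigl(\Omega^n_\tau(X) \to \Omega^n_{\rs}(X)\bigr)$. An element $\omega$ of $T$ vanishes at the generic point (the paper's Step~1, carried out with the functor $(-)\ess$), and by definition of sheafification it lifts to an honest form $\omega'$ on the \emph{disjoint union} $V$ of a cover --- no gluing required --- where $\omega'$ is then a torsion form. Theorem~\ref{hypH} kills $\omega'$ on a proper birational $V' \to V$, and Lemma~\ref{lem:makecover} rearranges $V' \to V \to Y \to X$ into a $\tau$-cover of $X$ on which $\omega$ vanishes, whence $\omega = 0$ by separatedness of the sheaf. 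If you wish to keep your structure, the repair is to abandon the construction of $t$ on $Y'$ and of a preimage by hand, and to argue only about the kernel of the retraction; your dvr-gluing machinery then becomes unnecessary.
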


The isomorphism $Ω^n(X) \cong Ω^n_{\eh}(X)$ has been shown previously by Geisser
in \cite[Theorem~4.7]{Gei06} assuming the existence \emph{strong} resolutions of
singularities exists, i.e., under the additional assumption that any birational
proper morphism between smooth varieties can be refined by a series of blow-ups
with smooth centres.

\begin{proof}[Proof of Theorem~\ref{prop:cdhDescentModTorsion}]
  We formulate the proof in the case of the $\cdh$-topology.  The same arguments
  also apply, \emph{mutatis mutandis}, in the other cases.

  \subsubsection*{Step 1, proof of~\eqref{eq:hjlb} up to torsion}

  Since $X$ is regular, the composition of natural maps,
  \begin{equation}\label{eq:sfsg}
    Ω^n(X) → Ω^n_{\cdh}(X) → \underbrace{Ω^n_{\rs}(X)}_{\mathclap{= Ω^n(X) \text{
          by Rem.~\ref{rema:firstRemarks:regularSame}}}},
  \end{equation}
  is an isomorphism by Remark~\ref{rema:firstRemarks:regularSame}.  We claim
  that the direct complement $T_{\cdh}(X)$ of $Ω^n(X)$ in $Ω^n_{\cdh}(X)$ is torsion.  To
  this end, we may assume $X$ is connected, say with function field $K$.
  Applying the functor $(\bullet)\ess$ to Sequence~\eqref{eq:sfsg}, we obtain
  \begin{equation}\label{equa:omegacdhregIsoGen}
    \underbrace{(Ω^n)\ess(K)}_{\mathclap{= Ω^n(K) \text{ by Rem.~\ref{rem:dfg2}}}}
    \longrightarrow
    (Ω^n_{\cdh})\ess(K)
    \longrightarrow
    \underbrace{(Ω^n_{\rs})\ess(K)}_{\mathclap{= Ω^n(K) \text{ by
          Rems.~\ref{rema:firstRemarks:regularSame}, \ref{rem:dfg2}}}}.
  \end{equation}

  We claim that both morphisms in~\eqref{equa:omegacdhregIsoGen} are
  isomorphisms.  Equation~\eqref{eq:hjlb} then follows from \ref{unr1}.  In
  order to justify the claim, it suffices to show that the first morphism is
  surjective.  Suppose that $s ∈ (Ω^n_{\cdh})\ess(K)$.  By the definition of
  $(-)\ess$, there is some open affine $U ⊆ X$ such that $s$ is represented by a
  section $s' ∈ (Ω^n_{\cdh})(U)$.  Then, there exists a $\cdh$-cover $V → U$
  such that $s'|_V$ is in the image $Ω^n(V) → Ω^n_\cdh(V)$.  Replacing $U$ by
  some smaller open affine, we can assume that $V = U$.  That is, the
  representative $s' ∈ (Ω^n_{\cdh})(U)$ is in the image of $(Ω^n)(U) →
  (Ω^n_{\cdh})(U)$.  Hence, the section $s ∈ (Ω^n_{\cdh})\ess(K)$ it represents
  is in the image of $(Ω^n)\ess(K) → (Ω^n_{\cdh})\ess(K)$.

  \subsubsection*{Step 2, vanishing of torsion}
  
  We will
  now show that $T_{\cdh}(X) = 0$.  Let $ω ∈ T_{\cdh}(X)$ be any element---so
  $ω$ is an element of $Ω^n_{\cdh}(X)$ which vanishes in $(Ω^n_{\cdh})\ess(K)$.
  We aim to show that $ω = 0$.  To this end, we construct a number of covering
  spaces, associated groups and preimages, which lead to
  Diagrams~\eqref{eq:pfff1} and \eqref{eq:pfff2} below.  We may assume without
  loss of generality that $X$ is reduced.

  By definition of sheafification, there exists a $\cdh$-cover $V → X$ of $X$
  for which $ω|_V$ is in the image of $Ω^n(V) → Ω^n_{\cdh}(V)$.  Again we may
  assume that $V$ is reduced.  We choose one element $ω' ∈ Ω^n(V)$ contained in
  the preimage of $ω$.  As $V → X$ is a $\cdh$-cover, we have a factorisation
  $\Spec\, K → V → X$.  Recalling from Step~1 that $(Ω^n)\ess(K) →
  (Ω^n_{\cdh})\ess(K)$ is an isomorphism, it follows that $ω' ∈ Ω^n(V)$ is a
  torsion element.

  By Corollary~\ref{cor:normalform} we may refine the covering map and assume
  without loss of generality that it factorises as follows,
  $$
  V \xrightarrow{\text{Nisnevich cover\vphantom{p}}} Y \xrightarrow{\text{proper
      birational}} X.
  $$
  Since $ω' ∈ Ω(V)$ is torsion, Hypothesis~\ref{hypH} gives a proper birational
  morphism $V' → V$ such that $ω'|_{V'} = 0$.  Finally,
  Lemma~\ref{lem:makecover} allows one to find $Y''$, $Y'$ fitting into the
  following commutative diagram
  \begin{equation}\label{eq:pfff1}
    \begin{gathered}
      \xymatrix{
        Y'' \ar[rrrr]^{\text{Nisnevich cover}} \ar[d]_{\text{proper biratl.}} &&&& Y' \ar[d]^{\text{proper biratl.}} & \\
        V' \ar[rr]_{\text{proper biratl.}} && V \ar[rr]_{\text{Nisnevich cover}} && Y \ar[rr]_{\text{proper biratl.}} && X
      }
    \end{gathered}
  \end{equation}
  Proposition~\ref{prop:cdpRefinable} implies that $Y' → X$ and $Y'' → Y$ are
  $\cdh$-covers.  A diagram chase will now finish the argument:
  \begin{equation}\label{eq:pfff2}
    \begin{gathered}
      \xymatrix@!=0.3cm{ %
        & \underset{\text{contains }ω|_{Y''} = 0}{Ω^n_{\cdh}(Y'')}
        &&&& Ω^n_{\cdh}(Y') \ar[llll] \\
        \underset{\text{contains }ω'|_{Y''} = 0}{Ω^n(Y'')} \ar[ur] \\
        & Ω^n_{\cdh}(V') \ar[uu]
        && \underset{\text{contains }ω|_V}{Ω^n_{\cdh}(V)} \ar[ll]
        && Ω^n_{\cdh}(Y) \ar[ll] \ar[uu]
        && \underset{\text{contains }ω}{Ω^n_{\cdh}(X)} \ar[ll] \\
        \underset{\text{contains }ω'|_{V'} = 0}{Ω^n(V')} \ar[uu] \ar[ur]
        && \underset{\text{contains }ω'}{Ω^n(V)} \ar[ll] \ar[ur]
      }
    \end{gathered}
  \end{equation}
\end{proof}

\begin{prop}\label{prop:cdhInRs}
  Let $k$ be a perfect field and $n ∈ \bN$ be any number.  The canonical map
  $Ω^n_{\cdh} → Ω^n_{\rs}$ is a monomorphism.
\end{prop}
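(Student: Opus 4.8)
The plan is to prove injectivity of the comparison map $c_X \colon Ω^n_{\cdh}(X) → Ω^n_{\rs}(X)$ by a $\cdh$-local argument that reduces everything to Corollary~\ref{hypHCover}. Fix $X ∈ \Sch(k)$ and a class $ω ∈ Ω^n_{\cdh}(X)$ with $c_X(ω) = 0$; the goal is to show $ω = 0$. Since $Ω^n_{\cdh}$ is a $\cdh$-sheaf, hence separated, it suffices to produce a single $\cdh$-cover $V' → X$ on which $ω$ restricts to zero. As a first step, the definition of $\cdh$-sheafification provides a $\cdh$-cover $V → X$ together with an honest Kähler form $ω' ∈ Ω^n(V)$ whose image under $Ω^n(V) → Ω^n_{\cdh}(V)$ equals $ω|_V$; this is exactly the input already used in Step~2 of the proof of Theorem~\ref{prop:cdhDescentModTorsion}.

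Next I would transport the vanishing of $c_X(ω)$ down to the form $ω'$. By naturality of the comparison maps, the composite $Ω^n(V) → Ω^n_{\cdh}(V) \xrightarrow{c_V} Ω^n_{\rs}(V)$ is the canonical presheaf map $Ω^n → Ω^n_{\rs}$, and $c_V(ω|_V) = c_X(ω)|_V = 0$; hence $ω'$ maps to $0$ in $Ω^n_{\rs}(V)$. Now I invoke the reconstruction formula of Proposition~\ref{prop:altDefrs}, namely $Ω^n_{\rs}(V) = \varprojlim_{W ∈ \Cur(V)} (Ω^n)\ess(W)$. For every point $v ∈ V$ the residue field $k(v)$ is a finitely generated extension of $k$, so $\Spec k(v)$ is a zero-dimensional, regular, local object of $\Cur(V)$; projecting to its factor shows that the image of $ω'$ in $(Ω^n)\ess(\Spec k(v)) = Ω^n_{k(v)/k}$ vanishes. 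In other words $ω'|_v = 0$ for every point $v ∈ V$, which is precisely the hypothesis of Corollary~\ref{hypHCover}.

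Finally, Corollary~\ref{hypHCover} supplies a $\cdp$-morphism $V' → V$ with $ω'|_{V'} = 0$, whence $ω|_{V'} = 0$ in $Ω^n_{\cdh}(V')$. Since a $\cdp$-morphism is a $\cdh$-cover, the composite $V' → V → X$ is again a $\cdh$-cover, and separatedness of $Ω^n_{\cdh}$ forces $ω = 0$. I expect the only genuinely delicate points to be bookkeeping ones: first, verifying the naturality square that identifies the two routes $Ω^n(V) → Ω^n_{\rs}(V)$, so that the vanishing of $c_X(ω)$ really descends to $ω'$; and second, confirming that the family $\{\Spec k(v) → V\}_{v ∈ V}$ of residue-field points genuinely lies in $\Cur(V)$, so that the limit description of $Ω^n_{\rs}(V)$ detects exactly the pointwise vanishing needed to feed Corollary~\ref{hypHCover}. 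The appealing feature of this route is that it never requires resolving $V$, so no resolution of singularities enters.
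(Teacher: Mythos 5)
Your proposal is correct and follows essentially the same route as the paper: reduce to an honest Kähler form $ω'$ on a $\cdh$-cover lying in the kernel of $Ω^n → Ω^n_{\rs}$, show that $ω'$ vanishes at every point, feed this into Corollary~\ref{hypHCover} to kill $ω'$ on a $\cdp$-morphism, and conclude by separatedness of the $\cdh$-sheaf. The only (harmless) cosmetic difference is in the pointwise-vanishing step: you project out of the $\Cur$-limit of Proposition~\ref{prop:altDefrs} onto the factor $\Spec k(v)$, whereas the paper argues more directly from the definition of $Ω^n_{\rs}$ as a limit over $\Reg$, using that the regular locus of $\overline{\{x\}}_{\red}$ is an object of $\Reg(X')$ whose generic point is $x$.
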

\begin{proof}
  Let $X ∈ \Sch(k)$ and $ω$ be in the kernel of $Ω^n_{\cdh}(X) → Ω_{\rs}^n(X)$.
  Choose a $\cdh$-cover $X' → X$ such that the restriction of $ω$ to $X'$ is in
  the image of $Ω^n(X') → Ω^n_{\cdh}(X')$, so now we have a section $ω'$ in the
  kernel of $Ω^n(X') → Ω^n_{\rs}(X')$ and we wish to show that it vanishes on a
  $\cdh$-cover of $X'$.  By Corollary~\ref{hypHCover} it suffices to show that
  $ω'$ vanishes on every point of $X'$.  Let $x ∈ X'$ be a point,
  $\overline{\{x\}}$ its closure in $X'$ with the reduced scheme structure, and
  let $V = (\overline{\{x\}})_{\mathrm{reg}}$ be its regular locus.  Since the
  section $ω'$ vanishes in $Ω^n_{\rs}(X')$, it vanishes on every scheme in
  $\Reg(X')$, and in particular, on $V$.  But the generic point of $V$ is
  isomorphic to $x$, and therefore $ω'$ vanishes on $x$.
\end{proof}

\begin{propo}\label{prop:cdhRsAgree}
  Let $k$ be a perfect field and $n ∈ \bN$ be any number.  Assume that weak
  resolutions of singularities exist for schemes defined over $k$.  Then the
  natural maps
  $$
  Ω^n_{\rh}→ Ω^n_{\cdh}→Ω^n_{\eh}→ Ω^n_{\rs}
  $$
  of presheaves on $\Sch(k)$ are isomorphisms.  As such, all four are
  unramified.
\end{propo}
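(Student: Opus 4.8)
The plan is to build on the two facts already available: that $Ω^n_{\cdh} → Ω^n_{\rs}$ is a monomorphism (Proposition~\ref{prop:cdhInRs}), and that on regular schemes all of $Ω^n, Ω^n_{\rh}, Ω^n_{\cdh}, Ω^n_{\eh}, Ω^n_{\rs}$ coincide (Theorem~\ref{prop:cdhDescentModTorsion}). First I would observe that the proof of Proposition~\ref{prop:cdhInRs} uses only Corollary~\ref{hypHCover} together with the fact that a section dying in $Ω^n_{\rs}$ vanishes at every point; since neither input is sensitive to the choice of topology among $\rh$, $\cdh$, $\eh$, the very same argument shows that each of $Ω^n_{\rh} → Ω^n_{\rs}$, $Ω^n_{\cdh} → Ω^n_{\rs}$ and $Ω^n_{\eh} → Ω^n_{\rs}$ is a monomorphism. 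All four presheaves are $\rh$-sheaves (an $\eh$- or $\cdh$-sheaf is \emph{a fortiori} an $\rh$-sheaf), so the maps in question are morphisms in the abelian category of $\rh$-sheaves of abelian groups. In that category it then suffices to prove that the coarsest comparison map $Ω^n_{\rh} → Ω^n_{\rs}$ is an epimorphism: being simultaneously mono and epi it is an isomorphism; the maps $Ω^n_{\cdh} → Ω^n_{\rs}$ and $Ω^n_{\eh} → Ω^n_{\rs}$ then become epimorphisms (since $Ω^n_{\rh} → Ω^n_{\rs}$ factors through them) and, being mono as well, are isomorphisms; and finally $Ω^n_{\rh} → Ω^n_{\cdh}$ and $Ω^n_{\cdh} → Ω^n_{\eh}$ are isomorphisms by two-out-of-three.

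The main step is therefore to prove that $Ω^n_{\rh} → Ω^n_{\rs}$ is an epimorphism of $\rh$-sheaves, i.e.\ that every section is $\rh$-locally in the image. I would argue by Noetherian induction on $\dim X$. Given $X ∈ \Sch(k)$ and $s ∈ Ω^n_{\rs}(X)$, the $\cdp$-morphisms $X_{\red} → X$ and $\coprod_i X_i → X_{\red}$ onto the reduced irreducible components $X_i$ reduce the claim to the case where $X$ is integral. For integral $X$, weak resolution of singularities furnishes a proper birational morphism $π : \wtilde{X} → X$ with $\wtilde{X}$ regular; choosing a nowhere-dense closed subscheme $Z \subsetneq X$ over whose complement $π$ is an isomorphism, the morphism $\wtilde{X} \amalg Z → X$ is proper and completely decomposed, hence a $\cdp$-morphism (Definition~\ref{defi:cdp}) and in particular an $\rh$-cover. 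On the regular scheme $\wtilde{X}$ we have $Ω^n_{\rs}(\wtilde{X}) = Ω^n_{\rh}(\wtilde{X})$ by Theorem~\ref{prop:cdhDescentModTorsion}, so $s|_{\wtilde{X}}$ already lies in the image; on $Z$, which has dimension $< \dim X$, the inductive hypothesis exhibits $s|_Z$ in the image after a further $\rh$-cover. Refining $\{\wtilde{X} → X,\ Z → X\}$ by these covers displays $s$ as $\rh$-locally in the image of $Ω^n_{\rh}$. The base case $\dim X = 0$ is trivial, a reduced zero-dimensional scheme being regular.

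The hard part will be this inductive step, and in particular the bookkeeping that makes the induction close: that $\wtilde{X}$ is \emph{regular}, so that Theorem~\ref{prop:cdhDescentModTorsion} applies and the resolution contributes nothing new, and that the complementary centre $Z$ is a proper closed subset of the integral scheme $X$, hence of strictly smaller dimension. This is precisely the point at which weak resolution of singularities enters, and the only one — consistent with the remark that it is unknown whether the hypothesis can be removed.

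Finally, to justify the last sentence I would note that $Ω^n_{\rs}$ is unramified: its restriction to $\Reg(k)$ agrees with $Ω^n$ by Remark~\ref{rema:firstRemarks:regularSame}, and $Ω^n$ is unramified by Example~\ref{ex:unramified}. As unramifiedness is a property of the restriction to $\Reg(k)$, and the remaining three presheaves are now isomorphic to $Ω^n_{\rs}$, all four are unramified.
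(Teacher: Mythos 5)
Your proof is correct, and it is organised differently from the one in the paper. The paper also inducts on dimension and feeds in weak resolution through the abstract blow-up square, but it does so by writing down, for each of $\rh$, $\cdh$, $\eh$, $\rs$, the left-exact $\cdp$-descent sequence $0 \to \Omega^n_{\bullet}(X) \to \Omega^n_{\bullet}(\wtilde{X}) \oplus \Omega^n_{\bullet}(Z) \to \Omega^n_{\bullet}(E)$ associated with a desingularisation, and then running a four-lemma: the comparison map is an isomorphism on $Z$ and $E$ by induction and on $\wtilde{X}$ by Theorem~\ref{prop:cdhDescentModTorsion}, hence on $X$. You instead split the statement into injectivity and surjectivity of sheaf maps: the monomorphism half is the (topology-insensitive) argument of Proposition~\ref{prop:cdhInRs}, which rests only on Corollary~\ref{hypHCover} and therefore holds \emph{unconditionally}; the epimorphism half is local surjectivity along the $\rh$-cover $\wtilde{X} \amalg Z \to X$, which is where resolution enters, and the remaining maps are handled by two-out-of-three. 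Your decomposition buys two things: you never need to verify exactness of the descent sequence at the middle term (only that $\wtilde{X} \amalg Z \to X$ is a cover), and it isolates precisely which half of the statement depends on resolution of singularities --- consistent with the remark following the proposition in the paper. The paper's version is slightly more self-contained in that it does not invoke Proposition~\ref{prop:cdhInRs} or its $\rh$/$\eh$ analogues. Both arguments ultimately consume the same geometric inputs.
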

\begin{proof}
  We argue by induction on the dimension.  In dimension zero, there is nothing
  to show.  Now, let $X ∈ \Sch(k)$ be of positive dimension.  We may assume that
  $X$ is reduced and hence generically regular.  Let $\widetilde{X} → X$ be a
  desingularisation, isomorphic outside a proper closed set $Z \subsetneq X$ and
  with exceptional locus $E \subsetneq \widetilde{X}$.  It follows from
  $\cdp$-descent that the sequences
  $$
  0 → Ω^n_{\bullet}(X) → Ω^n_{\bullet} \bigl( \widetilde{X} \bigr) \oplus
  Ω_{\bullet}^n(Z)→Ω^n_{\bullet}(E) \qquad \text{where } \bullet ∈ \{\rh, \cdh,
  \eh, \rs\}
  $$
  are all exact.  By inductive hypothesis the comparison map is an isomorphism
  for $Z$ and $E$.  It is also an isomorphism for $\widetilde{X}$, by
  Proposition~\ref{prop:cdhDescentModTorsion}.  Hence it is also an isomorphism
  for $X$.
\end{proof}

\begin{rema}
  It is currently unclear to us if Theorem~\ref{hypH} suffices to establish the
  conclusion of Proposition~\ref{prop:cdhRsAgree}.  For all we know,
  Theorem~\ref{hypH} only implies that the natural map $Ω^n_{\rs} →
  Ω^n_{\bullet,\rs}$ for $\bullet∈\{\rh,\cdh,\eh\}$ is isomorphic.
\end{rema}

\subsection{Torsion of $\cdh$-sheaves}\label{sec:torsion_cdh}
\approvals{
  Annette & yes \\
  Shane & yes\\
  Stefan & yes
}

Over fields of characteristic zero, the results \cite[Theorem~1, Remark~3.11 and
Proposition~4.2]{HJ} show that $Ω^n_{\cdh} = Ω^n_{\rs}$ is torsion free.  We are
going to show that this fails in positive characteristic.

\begin{exam}[Existence of $\cdh$-torsion]\label{coro:rs_not_torsionfree1}
  We maintain the setting and notation of
  Example~\ref{exam:pullbackTorsionNotTorsion}: the field $k$ is algebraically
  closed of characteristic two, $Y := \Spec\, \factor{k[x,y,z]}{(y²-xz²)}$ is
  the Whitney umbrella, and $X := \Spec\, k[x]$ is its singular locus.  Write
  $\widetilde{Y} := \Spec\, k[u,z] \cong \bA²$ and consider the (birational)
  desingularisation $π: \widetilde{Y} → Y$, given by
  $$
  π^\#: \factor{k[x,y,z]}{(y²-xz²)} → k[u,z], \qquad x \mapsto u²,\: y \mapsto uz,\: z \mapsto z.
  $$
  Let $E \subsetneq \widetilde{Y}$ be the exceptional locus of $π$.  In other
  words, $E$ equals the preimage of $X$ and is hence given by $z=0$.  Note that
  the morphism $E → X$ corresponds to the ring morphism $k[x] → k[u]$, $x
  \mapsto u²$, and therefore induces the zero morphism on $Ω¹$.  We compute
  $Ω¹_{\rs}$ of $Y$, $\widetilde{Y}$, $X$, and $E$.  The last three are regular,
  hence $Ω¹ → Ω¹_{\rs}$ is an isomorphism on these varieties.  Since $Ω¹_{\cdh}$
  is a $\cdh$-sheaf and $\widetilde{Y} \amalg X → Y$ is a $\cdh$-cover, we have
  the following exact sequence:
  $$
  0 → Ω¹_{\cdh}(Y) \xrightarrow{a} Ω¹\bigl( \widetilde{Y} \bigr) \oplus Ω¹(X)
  \xrightarrow{b} Ω¹(E).
  $$
  We have seen above that $Ω¹(X) ⊆ \ker b \isom Ω¹_{\cdh}(Y)$.  The associated
  sections of $Ω¹_{\cdh}(Y)$ vanish on $Y \setminus X$ and are therefore torsion
  on $Y$.  It follows that $\tor Ω_{\rs}¹(Y) \ne 0$, and that there are torsion
  elements whose restrictions to $X$ do not vanish, and furthermore, are not
  torsion elements of $Ω¹_{\cdh}(X)$.
\end{exam}

\begin{coro}\label{coro:rs_not_torsionfree2}
  For perfect fields of positive characteristic, the sheaves
  $Ω^{\bullet}_{\cdh}$ are not torsion-free in general.  \qed
\end{coro}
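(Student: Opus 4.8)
The plan is to read the statement straight off Example~\ref{coro:rs_not_torsionfree1}, so essentially no new argument is required beyond packaging what was already computed there. The phrase ``not torsion-free in general'' only asks for a single variety on which torsion-freeness fails, so it suffices to produce one such example with $\bullet = 1$.

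First I would extract the relevant output of Example~\ref{coro:rs_not_torsionfree1}. Over an algebraically closed field $k$ of characteristic two, the Whitney umbrella $Y = \Spec k[x,y,z]/(y^2-xz^2)$ is reduced and irreducible with singular locus $X = \Spec k[x]$, and $\cdp$-descent along $\widetilde{Y} \amalg X \to Y$ identifies $\Omega^1_{\cdh}(Y)$ with $\ker b$. The example shows that the image of $\Omega^1(X)$ sits inside this kernel and that the corresponding sections of $\Omega^1_{\cdh}(Y)$ vanish on the dense open $Y \setminus X$ while being nonzero; by Definition~\ref{defi:torsionF} these are therefore nonzero torsion sections, so $\tor\,\Omega^1_{\cdh}(Y) \neq 0$. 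Now torsion-freeness of the restriction of $\Omega^1_{\cdh}$ to the small Zariski site $Y_{\Zar}$ is, by definition, exactly the assertion that no nonzero local section vanishes on a dense open; since we have just exhibited such a section on $Y$ itself, the restriction of $\Omega^1_{\cdh}$ to $Y_{\Zar}$ is not torsion-free, which already yields the Corollary.

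To cover an arbitrary perfect field of positive characteristic, and not just algebraically closed fields of characteristic two, I would invoke the variants of the construction noted at the start of Example~\ref{coro:rs_not_torsionfree1}: for a field of characteristic $p$ one replaces the defining equation by $y^p - x z^p$ and the desingularisation by $x \mapsto u^p,\ y \mapsto uz,\ z \mapsto z$, so that the exceptional locus again maps to $X$ by the purely inseparable map $x \mapsto u^p$, which kills $\Omega^1$. The explicit $\cdp$-descent computation, together with the identification $\Omega^1(\widetilde{Y}) \cong \Omega^1_{\cdh}(\widetilde{Y})$ and its analogues for $X$ and $E$ from Theorem~\ref{prop:cdhDescentModTorsion}, then goes through verbatim, the perfectness of $k$ being used precisely to ensure that $\widetilde{Y}$, $X$ and $E$ are regular.

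I do not expect any genuine obstacle here, since the content lies entirely in the Example. The only point needing (minor) care is confirming that the umbrella construction descends to an arbitrary perfect field of the relevant characteristic: one must check that $y^p - x z^p$ remains irreducible and that $z^p$ stays a non-zerodivisor. This is routine, as $y^p - x z^p$ is irreducible over $k(x,z)$ for every field $k$ of characteristic $p$ (because $x z^p$ is a $p$-th power in $k(x,z)$ only if $x$ is, which it is not), and the resulting ring is a domain; this is the single place where the characteristic assumption genuinely enters.
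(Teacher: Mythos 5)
Your proposal is correct and is essentially the paper's own argument: the Corollary is stated as an immediate consequence of the Whitney-umbrella computation in Example~\ref{coro:rs_not_torsionfree1}, which exhibits nonzero sections of $Ω¹_{\cdh}(Y)$ coming from $Ω¹(X)\subseteq\ker b$ that vanish on the dense open $Y\setminus X$. Your extra care in replacing $y^2-xz^2$ by $y^p-xz^p$ and in noting that algebraic closedness is never used (only regularity of $\widetilde Y$, $X$ and $E$, which holds over any perfect field) is exactly the content of the paper's remark that ``variants of this example work for any prime $p$'', so no genuinely new route is taken.
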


\begin{coro}\label{coro:torsionIsNotAPresheafForcdh}
  For perfect fields of positive characteristic, the pull-back maps of
  $Ω¹_{\cdh}(·)$ do not induce pull-back maps between the groups $\tor
  Ω¹_{\cdh}(·)$.  In other words, $\tor Ω¹_{\cdh}$ is generally not a presheaf
  on $\Sch(k)$.  \qed
\end{coro}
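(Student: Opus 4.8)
The plan is to read off the failure of functoriality directly from Example~\ref{coro:rs_not_torsionfree1}, whose notation I retain: $k$ is algebraically closed of characteristic two, $Y = \Spec\, k[x,y,z]/(y²-xz²)$ is the Whitney umbrella, $X = \Spec\, k[x]$ is its reduced singular locus, $\widetilde{Y} = \Spec\, k[u,z]$ carries the desingularisation $π : \widetilde{Y} → Y$, and $E \subsetneq \widetilde{Y}$ is the exceptional locus $\{z=0\}$. The decisive structural observation is that the inclusion $f : X → Y$ of Example~\ref{exam:pullbackTorsionNotTorsion} is precisely the second leg of the $\cdh$-cover $\widetilde{Y} \amalg X → Y$. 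Since the Example already notes that variants work for every prime $p$, exhibiting one counterexample over one such field suffices to prove the ``in general'' statement.

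First I would produce an explicit torsion section of $Ω¹_{\cdh}(Y)$. Example~\ref{coro:rs_not_torsionfree1} supplies the exact sequence
$$
0 → Ω¹_{\cdh}(Y) \xrightarrow{a} Ω¹\bigl(\widetilde{Y}\bigr) \oplus Ω¹(X) \xrightarrow{b} Ω¹(E)
$$
together with the inclusion $Ω¹(X) ⊆ \ker b$, which holds because $E → X$ is given by $x \mapsto u²$ and hence induces the zero map on $Ω¹$ in characteristic two. Let $σ ∈ Ω¹_{\cdh}(Y)$ be the unique section (unique since $a$ is injective) with $a(σ) = (0, dx)$. As $X$ is a proper closed subset, $Y \setminus X$ is dense open, and $σ$ restricts to $0$ there: its $\widetilde{Y}$-component vanishes and $π$ is an isomorphism over $Y \setminus X$. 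Hence $σ ∈ \tor Ω¹_{\cdh}(Y)$ by Definition~\ref{defi:torsionF}.

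Next I would compute the pullback $f^*σ$. Because $X → Y$ is the second component of the cover $\widetilde{Y} \amalg X → Y$, the map $a$ is the pairing of the two restriction maps, that is $a = (π^*, f^*)$, under the identifications $Ω¹_{\cdh}(\widetilde{Y}) = Ω¹(\widetilde{Y})$ and $Ω¹_{\cdh}(X) = Ω¹(X)$, both valid for the regular schemes $\widetilde{Y}$ and $X$ by Theorem~\ref{prop:cdhDescentModTorsion}. Reading off the $X$-component of $a(σ) = (0, dx)$ therefore gives $f^*σ = dx$. Finally, $X = \bA¹_k$ is smooth, so $Ω¹(X)$ is a free $k[x]$-module and in particular torsion-free; hence $dx \ne 0$ is not torsion. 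Thus $f^*$ carries the torsion section $σ$ to the non-torsion section $dx$, which is exactly the assertion that the pull-back maps of $Ω¹_{\cdh}$ do not restrict to maps of torsion groups, i.e.\ that $\tor Ω¹_{\cdh}$ is not a presheaf on $\Sch(k)$.

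I expect the only point requiring genuine care to be the identification in the third paragraph, namely that the pullback along the inclusion $f$ coincides with the $X$-component of the Čech map $a$; this rests on the functoriality of $\cdh$-sheafification together with the comparison $Ω¹_{\cdh}(X) \cong Ω¹(X)$ for regular $X$. Everything else is bookkeeping already essentially carried out in Example~\ref{coro:rs_not_torsionfree1}.
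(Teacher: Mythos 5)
Your proposal is correct and follows the same route as the paper: the corollary is deduced directly from Example~\ref{coro:rs_not_torsionfree1}, where the section of $Ω¹_{\cdh}(Y)$ corresponding to $(0,dx)\in\kerβ$ is torsion on the Whitney umbrella $Y$ but pulls back along $X\to Y$ to the non-torsion form $dx\inΩ¹(X)$. Your third paragraph merely makes explicit the identification of the pullback with the $X$-component of the Čech map, which the paper leaves implicit; there is no gap.
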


\begin{rema}
  The same computation works for any extension of $Ω¹$ to a sheaf on $\Sch$
  which has $\cdp$-descent and agrees with Kähler differentials on regular
  varieties.  By Theorem \ref{hypH} this includes $\reg$-, $\rh$-, and
  $\eh$-differentials.
\end{rema}

% !TEX root = sdh.tex
%
% Do not edit the following line.  The text is automatically updated by
% subversion.
%
\svnid{$Id: 06.tex 239 2015-02-20 14:28:18Z kebekus $}

\section{Separably decomposed topologies}
\subversionInfo
\label{section:newTopologies}
\approvals{
  Annette & yes \\
  Shane & yes \\
  Stefan & yes
}

In many applications, de Jong's theorem on alterations \cite{deJ96} and Gabber's
refinement \cite{ILO} have proved a very good replacement for weak resolution of
singularities, which is not (yet) available in positive characteristic.  It is
natural to ask if one can pass from the $\eh$-topology to a suitable refinement
that allows alterations as covers, but still preserves the notion of a
differential in the smooth case.  This turns out impossible.

\subsection{h-topology}
\approvals{
  Annette & yes \\
  Shane & yes \\
  Stefan & yes
}

We record the following for completeness.

\begin{lemm}[Sheafification of differentials in the $\h$-topology]\label{lemm:hNoGood}
  Assume that there is a positive integer $m$ such that $m = 0$ in $\sO_S$ (for
  example, $S$ might be the spectrum of a field of positive characteristic).  If
  $n > 0$ is any number, then the $h$-sheafification $Ω_h^n$ of $Ω^n$ on
  $\Sch(S)$ is zero.  In fact, even the $h$-separated presheaf associated to
  $Ω^n$ is zero.
\end{lemm}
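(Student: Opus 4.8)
The plan is to prove the stronger, second assertion, since the $\h$-sheafification is obtained from the associated separated presheaf by a further plus-construction, and a further plus-construction of the zero presheaf is again zero. Writing $(\Omega^n)^+$ for the associated separated presheaf, recall that
$$
(\Omega^n)^+(X) = \varinjlim_{\mathcal U} \check{H}^0(\mathcal U, \Omega^n),
$$
where the colimit runs over $\h$-covers $\mathcal U = \{U_i → X\}$ of $X$. It therefore suffices to establish the following local statement: for every $Y ∈ \Sch(S)$ and every $s ∈ \Omega^n(Y)$ with $n ≥ 1$, there is an $\h$-cover $V → Y$ with $s|_V = 0$. Because $\Omega^n$ is a Zariski sheaf and $Y$ is quasi-compact, I would first pass to a finite affine open cover and reduce to $Y = \Spec\, A$ with $A$ of finite type over $R := \sO_S(S)$, noting that $m = 0$ in $A$. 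Choosing $R$-algebra generators $x_1, \dots, x_N$ of $A$, the module $\Omega^1_{A/R}$ is generated by $dx_1, \dots, dx_N$, so $s$ is an $A$-linear combination of wedges $dx_{i_1} \wedge \cdots \wedge dx_{i_n}$ involving only these finitely many functions.

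The key construction is to adjoin $m$-th roots: set
$$
B := \factor{A[T_1, \dots, T_N]}{(T_1^m - x_1, \dots, T_N^m - x_N)}.
$$
Each defining relation is monic, so $B$ is a free $A$-module of rank $m^N$; hence $A → B$ is finite and faithfully flat, $\Spec\, B → Y$ is a finite, flat, surjective $S$-morphism, and $\Spec\, B ∈ \Sch(S)$. Such a morphism is an $\h$-cover. The conceptual point---the same one that makes the geometric Frobenius kill differentials---is that $m$-th powers have vanishing differential: in $\Omega^1_{B/R}$ one has $dx_i = d(T_i^m) = m\, T_i^{m-1}\, dT_i = 0$, since $m = 0$ in $B$. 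Consequently every generator $dx_{i_1} \wedge \cdots \wedge dx_{i_n}$ restricts to zero and so $s|_{\Spec\, B} = 0$. This is exactly where the hypothesis $n ≥ 1$ is used, as each wedge must contain at least one factor $dx_i$ to be annihilated.

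It then remains to assemble the pieces. Given an arbitrary class in $(\Omega^n)^+(X)$, represent it by a matching family $(s_i)_{i ∈ I} ∈ \check{H}^0(\{U_i → X\}, \Omega^n)$ over some $\h$-cover. Applying the local statement to each $s_i ∈ \Omega^n(U_i)$ produces $\h$-covers $V_i → U_i$ with $s_i|_{V_i} = 0$; then $\{V_i → X\}$ is an $\h$-cover refining $\{U_i → X\}$ on which the family restricts to $0$, so the class dies in the colimit. Hence $(\Omega^n)^+ = 0$, and therefore $\Omega^n_{\h} = 0$.

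The mathematical heart---the identity $d(g^m) = m\,g^{m-1}\,dg = 0$---is immediate, so I expect the only genuine obstacles to be bookkeeping: verifying that the root-adjunction $\Spec\, B → Y$ is an honest $\h$-cover (finiteness and faithful flatness follow from the monic relations, and finite surjective morphisms are universal topological epimorphisms), and carefully reducing the assertion about the separated presheaf---which concerns \emph{arbitrary} Čech $0$-cocycles, not merely global sections---to the single-section statement via refinement of covers.
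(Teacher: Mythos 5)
Your proof is correct and follows essentially the same route as the paper: reduce to the affine case and kill differentials by adjoining $m$-th roots, using $d(T^m)=mT^{m-1}\,dT=0$ since $m=0$ in $\sO_S$. The paper's version is terser (it adjoins a root of a single element $a$ to kill $da$), while you adjoin roots of all generators at once and spell out the Čech/refinement bookkeeping, but the mathematical content is identical.
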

\begin{proof}
  Since the $\h$-topology is finer than the Zariski topology, it suffices to
  prove the statement for affine schemes.  We claim that for any ring $A$, and
  any differential of the form $da ∈ Ω \bigl(\Spec\, A \bigr)$, there exists an
  $\h$-cover $Y → \Spec\, A$ such that $da$ is sent to zero in $Ω(Y)$.  Indeed,
  it suffices to consider the finite surjective morphism
  $$
  \Spec\, \factor{A[T]}{(T^m - a)} → \Spec\, A.
  $$
  The image of $da$ under this morphism is $da = d(T^m) = m T^{m - 1} · dT =
  0$.
\end{proof}

\subsection{$\sdh$-topology}
\approvals{
  Annette & yes \\
  Shane & yes\\
  Stefan & yes
}

To avoid the phenomenon encountered in Lemma~\ref{lemm:hNoGood}, one could try
to consider the following topology, which is coarser than the the $\h$-topology.
We only allow proper maps that are generically separable.  By making the notion
stable under base change, we are led to the following notion.

\begin{defi}[$\sdh$-topology]\label{defi:sdhtopology}
  We define the $\sdh$-topology on $\Sch(S)$ as the topology generated by the
  étale topology, and by proper morphisms $f: Y → X$ such that for every $x ∈ X$
  there exists $y ∈ f^{-1}(x)$ with $[k(y): k(x)]$ finite separable.
\end{defi}

\begin{exam}\label{ex:sdhc1}
  Let $π: \widetilde{X} → X$ be a proper morphism and let $Z ⊂ X$ be a closed
  subscheme such that $π$ is finite and étale over $X \setminus Z$.  The obvious
  map $\widetilde{X} \amalg Z → X$ is then an $\sdh$-cover.
\end{exam}

\begin{rema}
  In characteristic zero, the $\h$- and $\sdh$-topologies are the same,
  \cite[Proof of Proposition~3.1.9]{Voev96}.
\end{rema}

\begin{exam}[Failure of $\sdh$-descent]\label{exam:cdhDecsentFailure}
  Let $k$ be a perfect field of characteristic $p$, let $n$ be a positive
  integer,
  $$
  S := \frac{k[x,y,z]}{(z^p+zx^n-y)} \isom k[x,z], \qquad \text{ and }\qquad R
  := k[x,y].
  $$
  The obvious morphism $R → S$ defines a covering map,
  \[
  π: \underbrace{\Spec\, S}_{=: \widetilde{X}} → \underbrace{\Spec\, R.}_{ =:
    X\vphantom{\widetilde{X}}}
  \]
  Note that both $X$ and $\widetilde{X}$ are regular.  The covering map is
  finite of degree $p$.  It is étale outside the exceptional set $Z=V(x)⊂ X$.
  Indeed, the minimal polynomial of $z$ over $k[x,y]$ is $T^p+x^nT-y$ with
  derivative $x^n$.  Hence $π$ is an alteration and generically separable.  This
  also means that $\widetilde{X} {⨯_X} \widetilde{X}$ is a regular outside of $Z
  {⨯_X} \widetilde{X} {⨯_X} \widetilde{X}$.
\end{exam}

We have observed in Example~\ref{ex:sdhc1} that $\widetilde{X} \amalg Z → X$ is
an sdh-cover.  We will now show that $\sdh$-descent fails for $Ω¹_{\rs}$ and
this cover.  Example~\ref{exam:cdhDecsentFailure} will also be used in
Lemma~\ref{lemm:nosaltdescent} to show that $Ω¹_{\rs}$ does not have descent for
a local (in the birational geometry sense) version of the $\sdh$-site.  The
reader who wishes to explore it in more detail can consult
Appendix~\ref{appendixCounterexample} where we make some explicit calculations.

\begin{propo}[Failure of $\sdh$-descent]\label{prop:noSdhDescent}
  In the setting of Example~\ref{exam:cdhDecsentFailure}, $\sdh$-descent fails
  for $Ω¹_{\cdh}$ and the cover $\widetilde{X} \amalg Z → X$.  In other words,
  $Ω¹(X)\neqΩ¹_{\sdh}(X)$ for this regular $X$.
\end{propo}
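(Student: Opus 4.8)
The plan is to produce an explicit compatible family on the sdh-cover $\widetilde{X} \amalg Z → X$ (an sdh-cover by Example~\ref{ex:sdhc1}) which witnesses the failure of descent, and then to show that the glued section genuinely survives sdh-sheafification. Since $X$, $\widetilde{X}$ and $Z$ are all regular, Theorem~\ref{prop:cdhDescentModTorsion} lets me replace $Ω¹_{\cdh}$ by $Ω¹$ on each of them, so it suffices to prove that $Ω¹(X) → Ω¹_{\sdh}(X)$ is not surjective. First I would record the relevant restriction maps: writing $\widetilde{X} = \Spec k[x,z]$ with $y = z^p + z x^n$, one has $dy \mapsto x^n\,dz + n x^{n-1} z\,dx$ on $\widetilde{X}$, while on $Z = V(x) = \Spec k[y]$ one has $dx \mapsto 0$ and $dy \mapsto dy$. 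The candidate is the pair $(ω, η) := (0,\, dy) ∈ Ω¹(\widetilde{X}) \oplus Ω¹(Z)$.

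Next I would check that $(0, dy)$ is already a compatible family for the presheaf $Ω¹$ on this cover. As $ω = 0$, the comparisons on $\widetilde{X} ⨯_X \widetilde{X}$ and on $Z ⨯_X Z = Z$ are automatic, so the only substantive check is on $\widetilde{X} ⨯_X Z$. A direct computation identifies $\widetilde{X} ⨯_X Z \isom \Spec k[z]$, with projection to $Z = \Spec k[y]$ given by the purely inseparable map $y \mapsto z^p$; this is the algebraic shadow of the fact that $\widetilde{X} → X$ is inseparable over the ramification locus $Z = V(x)$. Hence $η = dy$ pulls back to $d(z^p) = 0$, matching the pullback of $ω = 0$, so $(0, dy)$ lies in the equalizer of $Ω¹_{\cdh}(\widetilde{X} \amalg Z) \rightrightarrows Ω¹_{\cdh}\bigl((\widetilde{X} \amalg Z) ⨯_X (\widetilde{X} \amalg Z)\bigr)$. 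On the other hand, any $ρ = a\,dx + b\,dy ∈ Ω¹(X)$ with $ρ|_Z = dy$ satisfies $b(0,y) = 1$, so its restriction to $\widetilde{X}$ has $dz$-coefficient $b\,x^n \neq 0$ and cannot vanish; thus $(0, dy)$ is not in the image of $Ω¹_{\cdh}(X) = Ω¹(X)$. This is precisely the failure of sdh-descent for $Ω¹_{\cdh}$ and the cover.

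To upgrade this to the stated inequality $Ω¹(X) \neq Ω¹_{\sdh}(X)$, I would let $s ∈ Ω¹_{\sdh}(X)$ be the section glued from $(0, dy)$, so that $s|_{\widetilde{X}} = 0$ and $s|_Z = dy$, and argue $s$ is not in the image of $Ω¹(X) → Ω¹_{\sdh}(X)$. Suppose $s = \bar ρ$ for some $ρ ∈ Ω¹(X)$. Restricting to $\widetilde{X}$ gives $\overline{ρ|_{\widetilde{X}}} = 0$; granting that $Ω¹(\widetilde{X}) → Ω¹_{\sdh}(\widetilde{X})$ is injective, this forces $ρ|_{\widetilde{X}} = 0$, hence $ρ = 0$ because $Ω¹(X) → Ω¹(\widetilde{X})$ is injective (by the explicit formula above), and then $s|_Z = 0$, contradicting $s|_Z = dy \neq 0$ by injectivity on the regular $Z$.

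The step I expect to be the main obstacle is exactly this injectivity of $Ω¹(W) → Ω¹_{\sdh}(W)$ for regular $W$, that is, the sdh-separatedness of $Ω¹$ on $\Reg(k)$: one may \emph{not} assume a priori that sdh-sheafification preserves values on the regular schemes $\widetilde{X}$ and $Z$, as that is the very phenomenon under investigation, and without it one cannot exclude that $(0, dy)$ is refined away. I would establish it directly from Definition~\ref{defi:sdhtopology}: if $ρ ∈ Ω¹(W)$ dies in $Ω¹_{\sdh}(W)$ then it vanishes on some sdh-cover $V → W$, and since each generic point of $W$ admits a preimage in $V$ with finite separable residue extension, $ρ$ vanishes at all generic points of $W$; as $W$ is regular, $Ω¹(W)$ is torsion-free, so $ρ = 0$. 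Equivalently, $s$ is a nonzero \emph{torsion} section of $Ω¹_{\sdh}(X)$, vanishing generically because $\widetilde{X} → X$ is generically separable, whereas the torsion-free module $Ω¹(X)$ contributes no nonzero torsion; the finer explicit computations are deferred to Appendix~\ref{appendixCounterexample}.
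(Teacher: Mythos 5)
Your proposal is correct, and its core is identical to the paper's: the same cover $\widetilde{X}\amalg Z\to X$, the same witness $(0,dy)\in Ω¹(\widetilde{X})\oplus Ω¹(Z)$, the same key computation that $dy$ dies on $\widetilde{Z}=\widetilde{X}⨯_X Z\isom\Spec k[z]$ because $y\mapsto z^p$, and the same injectivity of $Ω¹(X)\to Ω¹(\widetilde{X})$ to rule out descent. Where you diverge is in the logical packaging of the conclusion. The paper tests exactness of the descent sequence for the presheaf $Ω¹_{\rs}$, which is already known to be defined on the singular fibre product $\widetilde{X}⨯_X\widetilde{X}$ and to restrict to $Ω¹$ on regular schemes; the non-exactness of that sequence (and the accompanying remark that the argument applies to any presheaf restricting to $Ω¹$ on $\Reg$) is what the paper offers for the clause $Ω¹(X)\neq Ω¹_{\sdh}(X)$. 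You instead glue the compatible family into an actual section $s\in Ω¹_{\sdh}(X)$ and then prove, as a separate lemma, that $Ω¹(W)\to Ω¹_{\sdh}(W)$ is injective for regular $W$ (via the separable-residue-field condition in the definition of the $\sdh$-topology plus torsion-freeness of $Ω¹_W$). That lemma is a genuine addition not present in the paper, and your instinct that one may not simply assume $Ω¹_{\sdh}$ preserves values on the regular pieces is well taken: without some such input the failure of the descent sequence for $Ω¹_{\rs}$ does not by itself pin down that the discrepancy occurs at $X$ rather than at one of the other schemes in the sequence. So your route buys a fully self-contained justification of the second sentence of the proposition, at the cost of one extra (easy and correct) separatedness argument; the paper's route is shorter because it leans on $Ω¹_{\rs}$ and leaves that translation implicit.
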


\begin{rema}
  The argument shows more: no presheaf $\Fh$ with $\Fh|_{\Reg} \cong Ω¹$ can
  ever be an $\sdh$-sheaf.
\end{rema}

\begin{proof}[Proof of Proposition~\ref{prop:noSdhDescent}]
  We have to discuss exactness (or rather failure of exactness) of the sequence
  \begin{equation}\label{eq:dflh}
    \begin{split}
      0 → Ω¹_{\rs}(X) \xrightarrow{α} Ω¹_{\rs}(\widetilde{X})\oplus Ω¹_{\rs}(Z) \hspace{5cm}\\
      \xrightarrow{β} Ω¹_{\rs}(\widetilde{X}⨯_X\widetilde{X}) \oplus
      Ω¹_{\rs}(\widetilde X ⨯_X Z ) \oplus Ω¹_{\rs}(Z ⨯_X \widetilde X ) \oplus
      Ω¹_{\rs}(Z ⨯_X Z).
    \end{split}
  \end{equation}
  Notice that $Z {⨯_X} Z = Z$, and that $\widetilde{Z} := \widetilde{X} ⨯_X Z$
  is given as
  \begin{equation}\label{eq:Zpreimage}
    \widetilde Z=\Spec\, \frac{k[x,y,z]}{(z^p+zx^n-y,x)} \isom
    \Spec\, \frac{k[y,z]}{(z^p-y)} \isom \Spec\, k[z].
  \end{equation}
  Using smoothness of $X, Z, \widetilde{X}$ and $\widetilde{Z}$,
  Sequence~\eqref{eq:dflh} simplifies to
  $$
  0 → Ω¹(X) \xrightarrow{α} Ω¹(\widetilde{X})\oplusΩ¹(Z) \xrightarrow{β}
  Ω¹_{\rs}(\widetilde{X}⨯_X\widetilde{X}) \oplus Ω¹(\widetilde Z) \oplus
  Ω¹(\widetilde Z) \oplus Ω¹(Z).
  $$
  We will work with this simplified description.  Recalling that $Ω¹(Z) =k[y]
  · dy$, consider the element in the middle that is given by
  $$
  0 \oplus dy ∈ Ω¹(\widetilde{X})
  \oplus Ω¹(Z).
  $$

  We claim that $β(0 \oplus dy) = 0$.  This will be shown by considering the
  four components of $β(0 \oplus dy)$ one at a time.  The component in
  $Ω¹_{\rs}(\widetilde{X}⨯_X\widetilde{X})$ clearly vanishes because the first
  component of $0 \oplus dy$ does.  The components in $Ω¹(\widetilde Z) \oplus
  Ω¹(\widetilde Z)$ vanish because $d(π|_{\widetilde Z}) : Ω¹(Z) →
  Ω¹(\widetilde{Z})$ is the map
  $$
  k[y] · dy → k[z] dz; \qquad f(y) · dy \mapsto f(z^p) · d(z^p) = 0.
  $$
  Finally, recall that we used the identity $Z ⨯_XZ = Z$ in the simplification.
  The two restriction maps $Ω¹(Z) → Ω¹(Z ⨯_XZ) = Ω¹(Z)$ agree, so that the last
  component vanishes as well.  The claim is thus shown.
  
  On the other hand, $0 \oplus dy$ cannot be in the image of $α$ because the
  restriction map $Ω¹(X) → Ω¹(\widetilde{X})$ is injective.  In summary, we see
  that \eqref{eq:dflh} can not possibly be exact.  This concludes the proof.
\end{proof}

\subsection{The site $\salt$}
\approvals{
  Annette & yes \\
  Shane & yes\\
  Stefan & yes
}

As the problem that arises in Example~\ref{exam:cdhDecsentFailure} seems to lie
in the non-separable locus of $X' → X$, one could try removing the need for $Z$,
by considering the following version of \cite[Exposé~II, Section~1.2]{ILO}.

\begin{defi}[Site $\salt(X)$]\label{defi:saltSite}
  Let $S$ be Noetherian, $X∈\Sch(S)$.  We define the site $\salt(X)$ as follows.
  The objects are those morphisms $f: X' → X$ in $\Sch(S)$ such that $X'$ is
  reduced, and for every generic point $\xi ∈ X'$, the point $f(\xi)$ is a
  generic point of $X$ and moreover, $k(\xi) / k \bigl( f(\xi) \bigr)$ is finite
  and separable.  The topology is generated by the étale topology, and morphisms
  of $\salt(X)$, which are proper.  Note that by virtue of being in $\salt(X)$,
  the latter are automatically generically separable.
\end{defi}

\begin{exam}
  ---
  \begin{enumerate}
  \item Let $X$ be integral and $f:X' → X$ a blowing-up.  Then $f$ is proper and
    birational, and hence an $\salt$-cover.

  \item Let $X$ be reduced with irreducible components $X_1, X_2$.  Then
    $\widetilde{X}=X_1 \amalg X_2 → X$ is an $\salt$-cover.

  \item The morphism $π : \widetilde{X} → X$ of
    Example~\ref{exam:cdhDecsentFailure} is an $\salt$-cover.
  \end{enumerate}
\end{exam}

The category $\salt(X)$ admits fibre products in the categorical sense, which
can be calculated as follows: For morphisms $Y'→ Y$ and $Y''→ Y$ in $\salt(X)$
let $Y' \stimes_Y Y''$ be the union of the reduced irreducible components of the
usual fibre product of schemes, $Y'{⨯_Y}Y''$, that dominate an irreducible
component of $X$.

\begin{exam}
  ---
  \begin{enumerate}
  \item Let $X$ be reduced and connected with irreducible components $X_1$, $X_2$.
    Let $\widetilde{X} := X_1 \amalg X_2$.  Then
    $$
    \widetilde{X} ⨯_X \widetilde{X} = X_1 \amalg X_2 \amalg (X_1 ∩ X_2) \amalg
    (X_2 ∩ X_1).
    $$
    In order to obtain the product in $\salt$ we have to drop the components which
    are not dominant over an irreducible component of $X$ and get
    $$
    \widetilde{X}\stimes_X\widetilde{X}=\widetilde{X}.
    $$

  \item Let $k$ be a perfect field, $C/k$ a nodal curve with normalisation
    $\widetilde{C}$ ---this means that locally for the étale topology we are in
    the situation of the previous example.  Then $\widetilde{C} ⨯_C
    \widetilde{C}$ has one irreducible component isomorphic to $\widetilde{C}$
    and two isolated points.  Hence
    $$
    \widetilde{C} \stimes_C \widetilde{C} \isom \widetilde{C}.
    $$

  \item In the setting of Example~\ref{exam:cdhDecsentFailure}, the canonical
    inclusion $\widetilde{X} \stimes_X \widetilde{X} → \widetilde{X} ⨯_X
    \widetilde{X}$ is an isomorphism.
  \end{enumerate}
\end{exam}

Since we have access to fibre products, a presheaf $\Fh$ on $\salt(X)$ is an
\emph{$\salt$-sheaf} if the following sequence is exact for all covers $X_2→
X_1$,
$$
0→ \Fh(X_1)→ \Fh(X_2)→ \Fh(X_2\stimes_{X_1}X_2).
$$
By de Jong's theorem on alterations \cite{deJ96}, the system of covers $Y → X$
with $Y$ regular is cofinal.

\begin{lemm}\label{lem:Xa}
  Let $k$ be perfect, $X∈\Sch(k)$.  For general $X$, the presheaf $Ω¹_{\rs}$
  restricted to $\salt(X)$ is not an $\salt$-sheaf.  In fact, it is not
  separated.
\end{lemm}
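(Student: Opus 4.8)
The plan is to refute separatedness directly: I would exhibit a single scheme together with one $\salt$-cover on which a \emph{nonzero} section of $Ω¹_{\rs}$ restricts to zero. Since every $\salt$-sheaf is in particular separated, and separatedness means precisely that $Ω¹_{\rs}(X_1) → Ω¹_{\rs}(X_2)$ is injective for each cover $X_2 → X_1$, this disproves both assertions at once; and because the statement is a counterexample ("for general $X$"), one well-chosen $X$ suffices. The scheme I would use is the Whitney umbrella of Examples~\ref{exam:pullbackTorsionNotTorsion} and \ref{coro:rs_not_torsionfree1}: over an algebraically closed field $k$ of characteristic two, let $Y := \Spec\, \factor{k[x,y,z]}{(y²-xz²)}$, with regular singular locus the $x$-axis $X := \Spec\, k[x]$, and let $π : \widetilde{Y} = \Spec\, k[u,z] → Y$ be the resolution given by $x \mapsto u²$, $y \mapsto uz$, $z \mapsto z$. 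As $π$ is proper (indeed finite, since $k[u,z]$ is integral, hence module-finite, over the image of $\factor{k[x,y,z]}{(y²-xz²)}$) and birational between integral schemes, it is an object of $\salt(Y)$ that is a proper cover, i.e.\ an $\salt$-cover.

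First I would produce the offending section. The Kähler form $dx$ is torsion on $Y$, since $0 = d(y²-xz²) = -z²\,dx$ in characteristic two; write $\overline{dx} ∈ Ω¹_{\rs}(Y)$ for its image under the canonical map $Ω¹(Y) → Ω¹_{\rs}(Y)$, which sends a form to the compatible family of its restrictions to all $Z ∈ \Reg(Y)$. To see $\overline{dx} \ne 0$, I would test it against the curve $X$ itself: being $\cong \bA¹$, it is regular, so the closed immersion $X → Y$ defines an object of $\Reg(Y)$, and the projection $Ω¹_{\rs}(Y) = \varprojlim_{Z ∈ \Reg(Y)} Ω¹(Z) → Ω¹(X)$ carries $\overline{dx}$ to $dx ∈ Ω¹(X) = k[x]\,dx$, which is visibly nonzero. (Alternatively, Example~\ref{coro:rs_not_torsionfree1} already records $\tor Ω¹_{\rs}(Y) \ne 0$ by means of this very section.)

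Next I would check that $\overline{dx}$ dies on the cover. Because $\widetilde{Y}$ is regular, Remark~\ref{rema:firstRemarks:regularSame} gives $Ω¹_{\rs}(\widetilde{Y}) = Ω¹(\widetilde{Y})$, so the restriction of $\overline{dx}$ along $π$ is computed by the single value $π^*(dx) = d(u²) = 2u\,du = 0$ in characteristic two. Hence the restriction map $Ω¹_{\rs}(Y) → Ω¹_{\rs}(\widetilde{Y})$ sends the nonzero section $\overline{dx}$ to $0$; this is exactly the failure of separatedness for the cover $π$, so $Ω¹_{\rs}$ is not separated on $\salt(Y)$, and a fortiori not an $\salt$-sheaf.

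Conceptually this is the torsion mechanism already isolated in Section~\ref{sec:torsion_cdh}: a torsion form that survives in the dvr-theoretic inverse limit $Ω¹_{\rs}$---because it is detected on the smooth singular locus---yet is annihilated by pulling back along a resolution, and a resolution is an admissible $\salt$-cover. The one step requiring genuine care, which I would treat as the main obstacle, is the nonvanishing of $\overline{dx}$ in $Ω¹_{\rs}(Y)$: a priori the inverse limit over $\Reg(Y)$ could collapse it to zero, and the point that rescues the argument is that the singular locus of the Whitney umbrella is itself regular, so it furnishes a test object $X ∈ \Reg(Y)$ on which $dx$ is plainly nonzero. Everything else is bookkeeping: confirming $π$ is proper, and that birationality makes it an $\salt$-cover.
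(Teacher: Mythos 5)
Your proof is correct, and it takes a genuinely different (more explicit) route than the paper. The paper argues by contradiction: assuming $Ω¹_{\rs}$ were separated on $\salt(X)$ for an irreducible $X$, it composes with a separable alteration $Y → X$ with $Y$ regular --- whose existence requires de Jong's theorem --- to obtain an injection $Ω¹_{\rs}(X) → Ω¹\bigl(k(X)\bigr)$, hence torsion-freeness of $Ω¹_{\rs}(X)$, contradicting Corollary~\ref{coro:rs_not_torsionfree2}. You instead exhibit the failure directly on the Whitney umbrella: the finite birational resolution $π:\widetilde{Y}→Y$ is an $\salt$-cover, the section $\overline{dx}∈Ω¹_{\rs}(Y)$ is nonzero because the regular singular locus $X\cong\bA¹$ is an object of $\Reg(Y)$ detecting it, and it dies on $\widetilde{Y}$ since $π^*(dx)=d(u²)=0$ in characteristic two; your identification of the nonvanishing of $\overline{dx}$ as the one delicate point is exactly right, and your use of the composition functor $\Reg(\widetilde{Y})→\Reg(Y)$ to compute the restriction is the correct reading of Warning~\ref{warn:Freg}. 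Both arguments ultimately rest on the same torsion phenomenon from Example~\ref{coro:rs_not_torsionfree1}, but they buy different things: the paper's reductio shows in general that separatedness on $\salt(X)$ would force $Ω¹_{\rs}(X)$ to be torsion-free, so the obstruction is \emph{precisely} torsion, at the price of invoking alterations; your version is self-contained and elementary, avoids de Jong entirely, and names the exact cover and the exact section on which separatedness fails.
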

\begin{proof}
  Assume that $Ω¹_{\rs}$ is separated.  In other words, assume that the natural
  map $Ω¹_{\rs}(Y) → Ω¹_{\rs}(Y')$ is injective for all separable alterations
  $Y'→ Y$ in $\salt(X)$.  Let $X$ be irreducible.  Choose a separable alteration
  $Y → X$ with $Y$ regular.  Then, we have injective maps
  $$
  Ω¹_{\rs}(X) → \underbrace{Ω¹_{\rs}(Y)}_{=Ω¹(Y)} → Ω¹ \bigl( k(Y) \bigr).
  $$
  The composition factors via $Ω¹ \bigl( k(X) \bigr) → Ω¹ \bigl( k(Y) \bigr)$.
  This map is also injective, because $k(Y)/k(X)$ is separable.  In total, we
  obtain that the map
  $$
  Ω¹_{\rs}(X) → Ω¹ \bigl( k(X) \bigr)
  $$
  is injective.  In particular, we obtain that $Ω¹_{\rs}(X)$ is torsion-free,
  contradicting Corollary~\ref{coro:rs_not_torsionfree2}.
\end{proof}

Torsion in $Ω¹_{\rs}(X)$, which played a role in the proof of
Lemma~\ref{lem:Xa}, occurs only for singular $X$.  The following example shows,
however, that $\salt$-descent also fails for regular $X$.

\begin{lemm}\label{lemm:nosaltdescent}
  Let $k$ be perfect.  Then, $Ω¹_{\rs}$ restricted to $\salt(X)$ does not have
  $\salt$-descent for the morphism $π : \widetilde{X}→ X$ of
  Example~\ref{exam:cdhDecsentFailure} if $n≥ 2$.  In particular,
  $Ω¹_{\salt}(X)\neq Ω¹(X)$ for this particular $X$.
\end{lemm}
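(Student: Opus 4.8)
The plan is to show that the $\salt$-descent sequence for the cover $\pi$,
$$0 \to \Omega^1_{\rs}(X) \xrightarrow{\alpha} \Omega^1_{\rs}(\widetilde X) \xrightarrow{\beta} \Omega^1_{\rs}(\widetilde X \stimes_X \widetilde X),$$
is not exact in the middle, by exhibiting an explicit element of $\ker \beta$ that does not lie in $\img \alpha$. Here $\alpha = \pi^*$ and, for the single-morphism cover $\pi$, $\beta = \mathrm{pr}_1^* - \mathrm{pr}_2^*$. Since $X$ and $\widetilde X$ are regular, Observation~\ref{obs:oisehs} identifies the first two groups with $\Omega^1(X) = k[x,y]\,dx \oplus k[x,y]\,dy$ and $\Omega^1(\widetilde X) = k[x,z]\,dx \oplus k[x,z]\,dz$, and $\alpha(dx) = dx$, $\alpha(dy) = d(z^p + zx^n) = x^n\,dz + nzx^{n-1}\,dx$. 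First I would pin down the target: as recorded after the definition of $\stimes$, in this case $\widetilde X \stimes_X \widetilde X = \widetilde X \times_X \widetilde X = \Spec A$ with $A = k[x,z,w]/(w^p + wx^n)$, where $w = z_1 - z_2$ records the difference of the two $z$-coordinates; the two projections satisfy $\mathrm{pr}_1^*(dz) = dz$ and $\mathrm{pr}_2^*(dz) = dz - dw$, while both fix $dx$.

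The candidate is $\eta := nz\,dx + x\,dz \in \Omega^1(\widetilde X)$ (which reads $x\,dz$ when $p \mid n$). A direct computation gives $\beta(\eta) = \mathrm{pr}_1^*\eta - \mathrm{pr}_2^*\eta = nw\,dx + x\,dw \in \Omega^1(A)$. The decisive point is that this is a torsion form: differentiating the defining relation yields $x^n\,dw + nwx^{n-1}\,dx = 0$ in $\Omega^1(A)$, that is, $x^{n-1}\,\beta(\eta) = 0$. I claim this forces $\beta(\eta) = 0$ in $\Omega^1_{\rs}(\widetilde X \stimes_X \widetilde X)$. Since $\Omega^1$ is unramified, Proposition~\ref{prop:altDefrs} (or the definition) gives $\Omega^1_{\rs}(\Spec A) = \varprojlim_{V \in \Reg(\Spec A)} \Omega^1(V)$, so it suffices to check that $\beta(\eta)$ pulls back to $0$ on every regular $V \to \Spec A$. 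If $x$ does not vanish identically on such a $V$, then $x^{n-1}|_V$ is a nonzerodivisor, and torsion-freeness of $\Omega^1(V)$ combined with $x^{n-1}\beta(\eta) = 0$ yields $\beta(\eta)|_V = 0$; if $x$ vanishes on $V$, then $V \to \Spec A$ factors through $(\Spec A/(x))_{\red} \cong \Spec k[z]$, on which both coefficients $nw$ and $x$ of $\beta(\eta)$ restrict to $0$. Either way $\beta(\eta)|_V = 0$, so $\eta \in \ker \beta$.

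It remains to see that $\eta \notin \img \alpha$, and this is precisely where the hypothesis $n \geq 2$ enters. Every element of $\img \alpha$ has the form $\alpha(a\,dx + b\,dy) = [\pi^*a + nzx^{n-1}\pi^*b]\,dx + x^n\pi^*(b)\,dz$, so its $dz$-coefficient lies in the ideal $(x^n) \subseteq k[x,z]$; the $dz$-coefficient of $\eta$ is $x$, which is not divisible by $x^n$ once $n \geq 2$. Hence $\eta \notin \img \alpha$, and $\Omega^1_{\rs}$ does not satisfy $\salt$-descent for $\pi$. For the final assertion $\Omega^1_{\salt}(X) \neq \Omega^1(X)$ I would promote $\eta$ to a genuine $\salt$-section. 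The form $\beta(\eta) = nw\,dx + x\,dw$ vanishes at every point of $\Spec A$ (by the same $x^{n-1}$-annihilation argument applied in each residue field), so Corollary~\ref{hypHCover} supplies a $\cdp$-cover $W' \to \widetilde X \stimes_X \widetilde X$ with $\beta(\eta)|_{W'} = 0$, and such a cover is an $\salt$-cover. Thus $\mathrm{pr}_1^*\eta$ and $\mathrm{pr}_2^*\eta$ have equal image in $\Omega^1_{\salt}(\widetilde X \stimes_X \widetilde X)$, so $\eta$ is a matching family for the sheaf $\Omega^1_{\salt}$ and the cover $\pi$, gluing to $\bar\eta \in \Omega^1_{\salt}(X)$ with $\pi^*\bar\eta$ the image of $\eta$. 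Were $\bar\eta$ the image of some $\omega \in \Omega^1(X)$, then $\eta$ and $\alpha(\omega)$ would agree in $\Omega^1_{\salt}(\widetilde X)$; but $\Omega^1(\widetilde X) \hookrightarrow \Omega^1_{\salt}(\widetilde X)$ is injective, since a nonzero form on the regular $\widetilde X$ survives on any generically separable alteration, forcing $\eta = \alpha(\omega) \in \img \alpha$ --- a contradiction. Hence $\Omega^1(X) \to \Omega^1_{\salt}(X)$ is not surjective.

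The hard part, and the step I expect to be the main obstacle, is the claim in the second paragraph that $\beta(\eta)$ dies in $\Omega^1_{\rs}$ of the \emph{singular} scheme $\widetilde X \stimes_X \widetilde X$. One cannot simply invoke ``torsion vanishes'', because $\Omega^1_{\rs}$ genuinely carries torsion in positive characteristic (Corollary~\ref{coro:rs_not_torsionfree2}); the argument must use both the precise annihilator $x^{n-1}$ coming from the defining relation and the separate analysis of the degenerate locus $\{x = 0\}$. A parallel subtlety arises in the last paragraph: one must verify that the $\cdp$-cover of Corollary~\ref{hypHCover} killing $\beta(\eta)$ is admissible in the $\salt$-topology, and that $\eta$ genuinely assembles into a section rather than merely a matching family up to torsion. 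Everything else, including the computation of $A$ and of $\beta(\eta)$, is routine once the projections are written down correctly.
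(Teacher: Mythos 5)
Your proof of the main assertion is correct, and it takes a genuinely different route from the paper's. Both arguments ultimately hinge on essentially the same element ($x^{-1}\,dy$ in the paper, your $\eta = x^{-(n-1)}\,dy = nz\,dx + x\,dz$), but they differ in how they show it lies in the kernel of the map to $\Omega^1_{\rs}(\widetilde{X}\stimes_X\widetilde{X})$. The paper proves that this kernel coincides with the kernel of the map to $\Omega^1\bigl(k(\widetilde{X}\times_X\widetilde{X})\bigr)$, and for that it resolves the surface $\widetilde{X}\times_X\widetilde{X}$, uses the resulting $\cdh$-cover $X^{(2)}\amalg Z^{(2)}$ together with the fact that $\Omega^1_{\rs}$ is a $\cdh$-sheaf, and exploits that the two projections agree on $Z^{(2)}$. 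You instead verify directly from the defining limit $\Omega^1_{\rs}(\Spec A)=\varprojlim_{V\in\Reg(\Spec A)}\Omega^1(V)$ that $\beta(\eta)=nw\,dx+x\,dw$ dies on every regular $V$, using the relation $x^{n-1}\beta(\eta)=0$ where $x$ is a nonzerodivisor and the computation $(\Spec A/(x))_{\red}\cong\Spec k[z]$ on the complementary locus. This is more elementary and self-contained: it needs neither resolution of singularities for surfaces nor the $\cdh$-sheaf property of $\Omega^1_{\rs}$, and it isolates exactly where the annihilator $x^{n-1}$ and the hypothesis $n\geq 2$ enter.

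There is, however, a genuine error in your treatment of the ``in particular'' clause: the $\cdp$-cover supplied by Corollary~\ref{hypHCover} is \emph{not} an $\salt$-cover. That corollary produces a cover of the form $Z'\amalg Y'\to Y$ in which $Z'$ lies over a nowhere dense closed subscheme of $Y$; the generic points of $Z'$ therefore do not dominate the generic points of $X$, so $Z'\amalg Y'$ is not an object of $\salt(X)$ and the morphism is not admissible --- excluding precisely such pieces is the whole point of the $\salt$-site. The step is repairable with what you have already established: every irreducible component of $\Spec A$ dominates $X$, so $x$ is a nonzerodivisor on each reduced component, and $x^{n-1}\beta(\eta)=0$ shows $\beta(\eta)$ is a torsion form on each component; Theorem~\ref{hypH} then kills it on a proper birational morphism of each component, and the disjoint union of these is a proper morphism in $\salt(X)$, hence an $\salt$-cover. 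With that substitution, your gluing argument for $\bar{\eta}\in\Omega^1_{\salt}(X)$ and the injectivity of $\Omega^1(\widetilde{X})\to\Omega^1_{\salt}(\widetilde{X})$ (via de Jong and separability of the residue field extensions at generic points) go through.
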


The reader interested in following the computations here might also wish to look
at Appendix~\ref{appendixCounterexample} first, where many of the relevant rings
and morphisms are explicitly computed.

\begin{proof}
  We consider the following commutative diagram,
  $$
  \xymatrix{%
    0 \ar[r] & Ω¹(X) \ar[d] \ar[r] & Ω¹(\widetilde{X}) \ar[d] \ar[r]^(.4){α} \ar[dr]_(.45){β} & Ω¹_{\rs}(\widetilde{X}{⨯_X}\widetilde{X}) \ar[d] \\
    0 \ar[r] & Ω¹ \bigl( k(X) \bigr) \ar[r] & Ω¹ \bigl(k(\widetilde{X}) \bigr) \ar[r] & Ω¹ \bigl( k(\widetilde{X}{⨯_X}\widetilde{X}) \bigr).
  }
  $$
  We wish to show that the top row is not exact.  As $X$ and $\widetilde{X}$ are
  smooth, the left two vertical morphisms are monomorphisms.  Moreover, since
  $\widetilde{X} → X$ is generically étale and $Ω¹$ is an étale sheaf, the lower
  row is exact.  Consequently, the top row is exact at $Ω¹(X)$.  Now, for the
  moment, we ask the reader to admit the following claim.

  \subsubsection*{Claim:} The kernels of $α$ and $β$ are equal.

  \subsubsection*{Application of the claim:} If this claim holds, then from a
  diagram chase, exactness of the top row at $Ω¹(\widetilde{X})$ would imply
  that $Ω¹(X)$ is the intersection of $Ω¹(k(X))$ and $Ω¹(\widetilde{X})$ inside
  $Ω¹(k(\widetilde{X}))$.  That is, exactness at $Ω¹(\widetilde{X})$ would imply
  the that inclusion
  \[
  \underbrace{k[x, y] · dx \oplus k[x, y] · dy}_{=Ω¹(X)} ⊆
  \underbrace{k(x, y) · dx \oplus k(x, y) · dy}_{= Ω¹\bigl( k(X) \bigr)}
  \; ∩ \; \underbrace{k[x, z] · dx \oplus k[x, z] · dz}_{= Ω¹(\widetilde
    X)}
  \]
  is in fact an equality inside $Ω¹\bigl( k(\widetilde{X}) \bigr) = k(x, z)
  · dx \oplus k(x, z) · dz$, where $y = z^p + zx^n$.

  However, the element
  \[
  x^{-1} · dy = n z x^{n - 2} · dx + x^{n-1} · dz ∈ Ω¹\bigl(
  k(\widetilde{X}) \bigr)
  \]
  is in the intersection on the right, but cannot come from an element on the
  left, since for any element coming from the left the coefficient of $dz$ is
  divisible by $x^n$.  Hence, the inclusion is strict, as long as our claim that
  $\ker α = \ker β$ holds.
  
  \subsubsection*{Proof of the Claim:} To see that $\ker α = \ker β$, first note
  that since $\widetilde{X}⨯_X\widetilde{X}$ is of dimension $≤ 2$, there is a
  strong resolution of singularities, that is, a proper birational map $ρ:
  X^{(2)} → \widetilde{X}⨯_X\widetilde{X}$, which is an isomorphism outside
  $Z⨯_X\widetilde{X} ⨯_X \widetilde{X}$.  Setting $Z^{(2)} = (Z ⨯_X
  \widetilde{X} ⨯_X \widetilde{X})_{\operatorname{red}}$, we now have a proper
  $\cdh$-cover $Z^{(2)} \amalg X^{(2)}$ of $\widetilde{X} ⨯_X \widetilde{X}$.
  This is useful because we have seen in Example~\ref{exam:ocdhs} that
  $Ω¹_{\rs}$ is a $\cdh$-sheaf and so
  $$
  Ω¹_{\rs} \bigl( \widetilde{X} ⨯_X \widetilde{X} \bigr) → Ω¹_{\rs} \bigl(
  X^{(2)} \amalg Z^{(2)} \bigr)
  $$
  is injective.  Now since $Z⨯_X\widetilde{X}⨯_X\widetilde{X} =
  (Z⨯_X\widetilde{X}) ⨯_Z (Z⨯_X\widetilde{X}) = \widetilde{Z} ⨯_Z
  \widetilde{Z}$, one calculates $Z^{(2)}$ as in \eqref{eq:Zpreimage} as
  $$
  \bigl( \Spec\, k[z] \otimes_{k[y]} k[z] \bigr)_{\operatorname{red}} = \bigl(
  \Spec\, k[z] \otimes_{k[z^p]} k[z] \bigr)_{\operatorname{red}} = \Spec\, k[z].
  $$
  From this calculation we glean two important pieces of information.  Firstly
  $Z^{(2)}$ is smooth, and so
  $$
  Ω¹_{\rs} \bigl( X^{(2)} \amalg Z^{(2)} \bigr) = Ω¹ \bigl( X^{(2)} \amalg Z^{(2)} \bigr)
  $$
  and since $Ω$ is torsion-free on regular schemes, the morphism
  $$
  Ω¹ \bigl( X^{(2)} \bigr) \oplus Ω¹ \bigl( Z^{(2)} \bigr) → Ω¹ \bigl( k(X^{(2)}) \bigr)
  \oplus Ω¹ \bigl( k(Z^{(2)}) \bigr)
  $$
  is injective.  Since $X^{(2)} → \widetilde{X}⨯_X\widetilde{X}$ was birational,
  all this implies that the morphism
  $$
  Ω¹_{\rs} \bigl( \widetilde{X} ⨯_X \widetilde{X} \bigr) → Ω¹ \bigl( k(\widetilde{X} ⨯_X \widetilde{X}) \bigr) \oplus Ω¹ \bigl( k(Z^{(2)}) \bigr)
  $$
  is injective.  So, to finish the proof of the claim, it suffices to show that
  the induced morphism $Ω¹ \bigl( \widetilde{X} \bigr) → Ω¹ \bigl( k(Z^{(2)})
  \bigr)$ is zero.  This is the second piece of information we obtain from the
  description of $Z^{(2)}$ above.  Since $Z^{(2)} = \widetilde{Z}$, the two
  compositions
  $$
  Z^{(2)} → \widetilde{X}{⨯_X}\widetilde{X} \rightrightarrows \widetilde{X}
  $$
  induced by the projections are equal, and so $Ω¹ \bigl( \widetilde{X} \bigr) →
  Ω¹ \bigl( k(Z^{(2)}) \bigr)$ is indeed zero.
\end{proof}

\begin{appendix}
% !TEX root = sdh.tex
%
% Do not edit the following line.  The text is automatically updated by
% subversion.
%
\svnid{$Id: 0A.tex 246 2015-02-23 09:33:16Z kebekus $}

\section{Theorem~\ref*{hypH}}\label{appA}
\label{sec:apphypH}
\subversionInfo
\approvals{
  Annette & yes \\
  Shane & yes\\
  Stefan & yes
}

We have seen in the main text that understanding the torsion in $Ω^n$ is crucial
in order to understand $Ω^n_{\cdh}$.  The main purpose of this appendix is to
give a proof of Theorem~\ref{hypH}.  We are highly indepted to an anonymous
referee who provided the crucial reference to the result of Gabber-Rambero
\cite[Corollary 6.5.21]{GR03}, which appeared as a hypothesis in an earlier
version of this article.  We also give criteria for testing the existence of
torsion in special cases.  These considerations are independent of the main
text.

Before going into the direct proof, we explain how Theorem~\ref{hypH} follows
easily from resolution of singularities.

\begin{lemm}
  Let $k$ be perfect.  Assume weak resolution of singularities holds over~$k$,
  that is, assume that for every reduced $Y$ there is a proper birational
  morphism $X→ Y$ with $X$ smooth.  Then, Theorem~\ref{hypH} holds true.
\end{lemm}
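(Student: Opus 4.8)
The plan is to take the resolution $X \to Y$ provided by the hypothesis as the desired modification $\wtilde Y \to Y$, and then to argue that the pull-back of a torsion form to a \emph{smooth} variety must vanish, because differential forms on smooth varieties are torsion-free. The whole argument is short; the only point needing attention is that pulling back a torsion form along $\pi$ keeps it torsion.

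First I would apply weak resolution of singularities. Since $Y$ is integral it is in particular reduced, so the hypothesis furnishes a proper birational morphism $\pi : X \to Y$ with $X$ smooth; I set $\wtilde Y := X$. As $\pi$ is birational and $Y$ is integral, $X$ is integral as well, with the same function field $k(X) = k(Y)$, and $\pi$ restricts to an isomorphism over some dense open subscheme $V \subseteq Y$.

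Next I would check that $\pi^*\omega$ vanishes on a dense open of $X$. By definition of torsion (Definition~\ref{defi:torsionF}) the form $\omega$ vanishes on a dense open $U \subseteq Y$. Since $U \cap V$ is again dense and open in $Y$ and $\pi$ is an isomorphism over $V$, the preimage $\pi^{-1}(U \cap V)$ is dense and open in $X$, and $\pi^*\omega$ vanishes there. Equivalently, $\omega$ maps to $0$ under $\Omega^n(Y) \to \Omega^n\bigl(k(Y)\bigr) = \Omega^n\bigl(k(X)\bigr)$, so the image of $\pi^*\omega$ at the generic point of $X$ is zero.

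Finally, I would invoke torsion-freeness on the smooth variety $X$. Over the perfect field $k$ the scheme $X$ is regular, so $\Omega^n_X$ is a locally free coherent sheaf (Example~\ref{ex:unramified}), in particular torsion-free; equivalently, the restriction $\Omega^n(X) \to \Omega^n\bigl(k(X)\bigr)$ is injective by axiom~\ref{unr1}. As $\pi^*\omega$ maps to $0$ at the generic point, this injectivity forces $\pi^*\omega = 0$ in $\Omega^n(X)$, which is exactly the conclusion of Theorem~\ref{hypH}. I expect no serious obstacle here: the single delicate step is the claim that the pull-back stays torsion, which would fail for a general morphism (cf.~Example~\ref{exam:pullbackTorsionNotTorsion}) but holds in the present situation precisely because $\pi$ is birational, hence an isomorphism over a dense open.
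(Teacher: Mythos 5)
Your argument is correct and is essentially the paper's own proof: take the resolution $\pi\colon X\to Y$ as the modification, note that $\pi^*\omega$ vanishes on the dense open $\pi^{-1}(U)$ (so is torsion on the integral scheme $X$), and conclude from torsion-freeness of $\Omega^n$ on the smooth variety $X$ that $\pi^*\omega=0$. The extra care you take with the locus $V$ where $\pi$ is an isomorphism is harmless but not needed, since birationality already makes $\pi^{-1}(U)$ dense.
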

\begin{proof}
  Let $ω ∈ \tor Ω^n(Y)$.  By definition there is a dense open subset $U⊂ Y$ such
  that $ω|_U$ vanishes.  Let $π:X→ Y$ a desingularisation.  Then $π^*ω∈ Ω^n(X)$
  is a torsion from because it vanishes on $π^{-1}U$.  As $X$ is smooth, this
  implies that $π^*ω=0$.
\end{proof}

\subsection{Valuation rings}\label{sec:val_rings}
\approvals{
  Annette & yes \\
  Shane & yes \\
  Stefan & yes
}

We give a reformulation of Theorem~\ref{hypH} in terms of vanishing of
differential forms on (not necessarily discrete) valuation rings.  In this
section, let $k$ be a perfect field.

Let $A$ be an integral $k$-algebra of finite type.  Recall that the
Riemann-Zariski space $\RZ(A)$, called the ``Riemann surface'' in \cite[§~17,
p.~110]{SZ}, as a set, is the set of (not necessarily discrete) valuation rings
of $\Frac(A)$ which contain $A$.  To a finitely generated sub-$A$-algebra $A'$
is associated the set $E(A') = \{ R ∈ \RZ(A) : A' ⊆ R \}$ and one defines a
topology on $\RZ(A)$ taking the $E(A')$ as a basis.  This topological space is
quasi-compact, in the sense that every open cover admits a finite subcover
\cite[Theorem~40]{SZ}.

Consider the following hypothesis.

\setcounter{dummy}{21}
\begin{hypoAlph}\label{hypo:valRing}
  For every finitely generated extension $K/k$ and every $k$-valuation ring $R$
  of $K$ the map $Ω^n(R) → Ω^n(K)$ is injective for all $n ≥ 0$.
\end{hypoAlph}
\begin{rema}[Hypothesis~\ref{hypo:valRing} true]
  The statement is true for $n=0$ because valuation rings are torsion free.
  \cite[Corollary 6.5.21]{GR03} states that Hypothesis~\ref{hypo:valRing} is
  true for $n = 1$, and then Lemma~\ref{lem:torsion_in_powers} says that this
  implies it is true for all $n ≥ 0$.
\end{rema}

We repeat Theorem~\ref{hypH} here as well for ease of reference.

\setcounter{dummy}{7}
\begin{hypoAlph}[{Theorem~\ref{hypH}}]\label{hypo:hypHhyp}
  Given a perfect field $k$, assume that for every integral $Y ∈ \Sch(k)$, every
  number $n ∈ \bN$ and every $ω ∈ \tor Ω^n(Y)$, there is a birational proper
  morphism $π: \widetilde{Y}→ Y$ such that the image of $ω$ in
  $Ω^n(\widetilde{Y})$ vanishes.
\end{hypoAlph}

\begin{propo}\label{prop:H-V}
  Let $k$ be perfect.  Then Hypothesis~\ref{hypo:valRing} and
  Hypothesis~\ref{hypo:hypHhyp} for $k$ are equivalent.  In particular,
  Hyothesis~\ref{hypo:hypHhyp} is true.
\end{propo}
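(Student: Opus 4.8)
The plan is to prove the equivalence by establishing the two implications separately, and then to deduce the ``in particular'' clause from the remark recording that Hypothesis~\ref{hypo:valRing} holds.

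First I would treat the easy implication, Hypothesis~\ref{hypo:hypHhyp} $\Rightarrow$ Hypothesis~\ref{hypo:valRing}. Let $K/k$ be finitely generated, $R$ a $k$-valuation ring of $K$, and take $η ∈ Ω^n(R)$ mapping to zero in $Ω^n(K)$. Writing $R$ as the filtered union of its finitely generated $k$-subalgebras, I would choose one such $A ⊆ R$ with $\Frac(A)=K$ for which $η$ is the image of some $ω ∈ Ω^n(A)$. Since $ω$ dies in $Ω^n(K)=Ω^n(\Frac A)$, it is a torsion form on the integral scheme $Y=\Spec A$, so Hypothesis~\ref{hypo:hypHhyp} supplies a proper birational $π:\wtilde Y → Y$ with $ω|_{\wtilde Y}=0$. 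The valuative criterion of properness lifts $\Spec R → Y$ to $\Spec R → \wtilde Y$, and comparing the two resulting factorisations of $Ω^n(A) → Ω^n(R)$ forces $η=0$. Hence $Ω^n(R) → Ω^n(K)$ is injective.

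The converse, Hypothesis~\ref{hypo:valRing} $\Rightarrow$ Hypothesis~\ref{hypo:hypHhyp}, is the heart of the matter and the place where the Riemann--Zariski space enters. Assuming valuation-ring injectivity, let $ω ∈ \tor Ω^n(Y)$; I may reduce to $Y=\Spec A$ with $A$ a finitely generated integral $k$-algebra and $K=\Frac(A)$ (or, for general integral $Y$, work with the limit of proper birational models over $Y$). Being torsion, $ω$ maps to zero in $Ω^n(K)$, so for every $R ∈ \RZ(A)$ the composite $Ω^n(A) → Ω^n(R) → Ω^n(K)$ kills $ω$; injectivity of $Ω^n(R) → Ω^n(K)$ then forces the image of $ω$ in $Ω^n(R)$ to vanish. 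Now I would use that $\RZ(A)=\varprojlim_α Y_α$ is the cofiltered limit of the proper birational models $Y_α → Y$, with continuous surjective projections $π_α$, and that each $R$ is the filtered union $\varinjlim_α \mathcal O_{Y_α,\,z_α(R)}$ of its local rings at the centres $z_α(R)$. From $ω\mapsto 0$ in $Ω^n(R)=\varinjlim_α Ω^n(\mathcal O_{Y_α,\,z_α(R)})$ I obtain a model $Y_{α(R)}$ on which $ω$ pulls back to something vanishing on an open neighbourhood $U_R$ of $z_{α(R)}(R)$. The preimages $π_{α(R)}^{-1}(U_R)$ cover $\RZ(A)$, which is quasi-compact \cite[Theorem~40]{SZ}; choosing a finite subcover $U_1,\dots,U_m$ on models $Y_{α_1},\dots,Y_{α_m}$ and passing to a common refinement $Y_β → Y$ dominating all the $Y_{α_j}$, the preimages $\wtilde U_j ⊆ Y_β$ of the $U_j$ cover $Y_β$, because $π_β$ is surjective. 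As $Ω^n$ is a Zariski sheaf and $ω$ pulls back to zero on each $\wtilde U_j$, it vanishes on all of $\wtilde Y:=Y_β$, which is the required blow-up.

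Finally, the remark preceding the proposition records that Hypothesis~\ref{hypo:valRing} holds for all $n$ --- for $n=1$ by \cite[Corollary~6.5.21]{GR03} and for general $n$ by Lemma~\ref{lem:torsion_in_powers}; combined with the second implication this yields the ``in particular'' clause, i.e.\ the truth of Theorem~\ref{hypH}. The main obstacle is exactly this converse direction: converting the pointwise, valuation-by-valuation vanishing of $ω$ into vanishing on a single proper model. The delicate inputs are the identification of $\RZ(A)$ as the limit of blow-ups with surjective projections and the use of its quasi-compactness to compress infinitely many local vanishing statements into one global one, whereas the forward direction is a routine application of the valuative criterion.
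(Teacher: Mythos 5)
Your proof is correct and follows essentially the same strategy as the paper's: the easy implication via the valuative criterion of properness, and the converse via quasi-compactness of the Riemann--Zariski space together with surjectivity of the centre map onto a suitable proper birational model, plus the same appeal to \cite[Corollary~6.5.21]{GR03} and Lemma~\ref{lem:torsion_in_powers} for the ``in particular'' clause. The only cosmetic difference is that you phrase the compactness argument through the limit description $\RZ(A)=\varprojlim_\alpha Y_\alpha$ over proper models, whereas the paper covers $\RZ(A)$ by the basic opens $E(A_R)$ attached to finitely generated sub-$A$-algebras and assembles the final model via Zariski's Main Theorem and fibre products.
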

\begin{proof}
  For the reader's convenience, the proof is subdivided into steps.

  \subsubsection*{Step 1: Proof \ref{hypo:valRing}$\,\Rightarrow\,$\ref{hypo:hypHhyp} in the affine case}

  Let $X = \Spec\, A ∈ \Sch(k)$ be integral and $ω ∈ Ω^n(X)$ an element which
  vanishes on a dense open, that is, the image of $ω$ in $Ω^n \bigl( \Frac(A)
  \bigr)$ is zero.  We wish to find a proper birational morphism $Y → X$ such
  that $ω|_Y = 0$.

  Hypothesis~\ref{hypo:valRing} implies then that the image of $ω$ in $Ω^n(R)$
  is zero for every valuation ring $R$ of $\Frac(A)$ which contains $A$.  As
  each $R$ is the union of its finitely generated sub-$A$-algebras, for each
  such $R$ there is a finitely generated sub-$A$-algebra, say $A_R$, for which
  $ω$ vanishes in $Ω^n(A_R)$.  The $E(A_R)$ then form an open cover of $\RZ(A)$
  and so since it is quasi-compact, there exists a finite subcover.  That is,
  there is a finite set $\{A_i\}_{i = 1}^m$ of finite generated sub-$A$-algebras
  of $\Frac(A)$ such that $ω$ is zero in each $Ω^n(A_i)$, and every valuation
  ring of $\Frac(A)$ which contains $A$, also contains one of the $A_i$'s.

  Zariski's Main Theorem in the form of Grothendieck,
  \cite[Théorème~8.12.6]{EGAIV3}, allows us, for each $i$, to choose a
  factorisation $\Spec\, A_i → Y_i → X$ as a dense open immersion followed by a
  proper morphism and to define $Y := Y_1 {⨯_X} \dots {⨯_X} Y_m$ (or better,
  define $Y$ to be the closure of the image of $\Spec\, \Frac(A)$ in this
  product) so that $Y → X$ is now a proper birational morphism.  Since $ω|_{A_i}
  = 0$ for each $i$, it suffices now to show that the set of open immersions $\{
  (\Spec\, A_i) ⨯_{Y_i} Y → Y \}_{i = 1}^m$ is an open cover of $Y$ to conclude
  that $ω|_Y = 0$.  But for every point $y ∈ Y$, there exists a valuation ring
  $R_y$ of $\Frac(A)$ such that $\Spec\, R_y → Y$ sends the closed point of
  $\Spec\, R_y$ to $y$, and since $R_y$ contains some $A_i$, there is a
  factorisation $\Spec\, R_y → \Spec\, A_i → Y$, and we see that $y ∈ (\Spec\,
  A_i) ⨯_{Y_i} Y$.

  \subsubsection*{Step 2: Proof \ref{hypo:valRing}$\,\Rightarrow\,$\ref{hypo:hypHhyp} in general}

  For the case of a general integral $X ∈ \Sch(k)$ we use the same trick.  Take
  an affine cover $\{U_i \}_{i = 1}^m$ of $X$.  We have just seen that there
  exist proper birational morphisms $V_i → U_i$ such that $ω|_{V_i} = 0$ for
  each $i$.  Zariski's Main Theorem in the form of Grothendieck gives
  compactifications $V_i → Y_i → X$.  We set $Y := Y_1 {⨯_X} \dots {⨯_X} Y_m$ so
  that $Y → X$ is proper birational, and the same argument as above shows that
  $\{ V_i {⨯_{Y_i}} Y → Y \}_{i = 1}^m$ is an open cover.  Since $ω|_{V_i} = 0$
  for each $i$, this implies that $ω|_Y = 0$.

  \subsubsection*{Step 3: Proof \ref{hypo:hypHhyp}$\,\Rightarrow\,$\ref{hypo:valRing}}

  Let $K$ be a finitely generated extension of $k$, let $R$ be a $k$-valuation
  ring of $K$, and let $ω$ be in the kernel of $Ω^n(R) → Ω^n(K)$.  There is some
  finitely generated sub-$k$-algebra $A$ of $R$ and $ω' ∈ Ω^n(A)$ such that
  $\Spec\, R → \Spec\, A$ is birational, and $ω'|_{R} = ω$.  Now by
  Hypothesis~\ref{hypo:hypHhyp} there is a proper birational morphism
  $\widetilde{Y} → \Spec\, A$ such that $ω'|_{\widetilde{Y}}$ is zero.  But by
  the valuative criterion for properness, there is a factorisation $\Spec\, R →
  \widetilde{Y} → \Spec\, A$, and so $ω = 0$.
\end{proof}

The following lemma says that if Hypothesis~\ref{hypo:valRing} is true for $n =
1$ then it is true for all $n ≥ 0$.

\begin{lemm}\label{lem:torsion_in_powers}
  Let $R$ be a (possibly non-discrete) valuation ring.  Let $M$ be a
  torsion-free $R$-module.  Then $\bigwedge^nM$ is torsion-free for all $n≥ 0$.
\end{lemm}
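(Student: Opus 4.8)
The plan is to reduce the statement to the case of finite free modules by writing $M$ as a filtered colimit, and then to exploit two facts: that $\bigwedge^n$ commutes with filtered colimits, and that torsion-freeness is inherited by such colimits over a domain.

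First I would establish the key special case: \emph{over a valuation ring $R$, every finitely generated torsion-free module $N$ is free.} Since $R$ is local, I would choose elements $x_1, \dots, x_m \in N$ lifting a basis of $N/\mathfrak m N$ over the residue field; by Nakayama these generate $N$. To see they are $R$-linearly independent, suppose $\sum a_i x_i = 0$ with some $a_i \neq 0$. Because the ideals of a valuation ring are totally ordered, the finitely generated ideal $(a_1, \dots, a_m)$ is principal, so after reindexing one has $a_i = a_1 c_i$ with $a_1 \neq 0$ and $c_1 = 1$. Then $a_1\bigl(\sum c_i x_i\bigr) = 0$, and torsion-freeness forces $\sum c_i x_i = 0$; reducing modulo $\mathfrak m$ yields a nontrivial relation among the $\bar x_i$, a contradiction. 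Hence $N$ is free of finite rank.

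Next I would write $M = \varinjlim_\alpha M_\alpha$ as the directed union of its finitely generated submodules. Each $M_\alpha$ is torsion-free, being a submodule of $M$, hence free by the previous step, so $\bigwedge^n M_\alpha$ is a finite free $R$-module and in particular torsion-free. Since $\bigwedge^n$ commutes with filtered colimits (it is a quotient of the functor $M \mapsto M^{\otimes n}$, and both tensor products and cokernels commute with filtered colimits), one obtains $\bigwedge^n M = \varinjlim_\alpha \bigwedge^n M_\alpha$. Finally, a filtered colimit of torsion-free modules over the domain $R$ is again torsion-free: an element killed by some $r \neq 0$ is represented by some $x_\alpha \in \bigwedge^n M_\alpha$, and $r x_\alpha$ already vanishes in some $\bigwedge^n M_\beta$ with $\beta \geq \alpha$, whence $x_\alpha$ maps to $0$ there and the class is zero. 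This gives the claim.

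The point I would stress is the choice of formulation. It is tempting to argue directly that a torsion element $\omega \in \bigwedge^n M$ involves only finitely many elements of $M$, hence lies in the image of $\bigwedge^n M_0$ for a finitely generated, and therefore free, submodule $M_0 \subseteq M$, and to conclude from torsion-freeness of $\bigwedge^n M_0$. The obstacle is that $\bigwedge^n$ does \emph{not} preserve injections in general, so $\bigwedge^n M_0 \to \bigwedge^n M$ need not be injective and one cannot transport the vanishing back to $M_0$. The filtered-colimit packaging sidesteps this precisely because the passage of torsion-freeness to a filtered colimit never uses injectivity of the transition maps $\bigwedge^n M_\alpha \to \bigwedge^n M_\beta$.
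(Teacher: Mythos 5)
Your proof is correct and follows essentially the same route as the paper: reduce to finitely generated submodules, show these are free, and pass to the filtered colimit using that $\bigwedge^n$ commutes with colimits and that a filtered colimit of torsion-free modules is torsion-free. The only difference is in the finitely generated case, where the paper quotes the fact that a finitely generated flat module over a local ring is free (torsion-free implying flat over a valuation ring), whereas you give a self-contained argument via the total ordering of ideals -- both are fine.
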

\begin{proof}
  We first consider the case where $M$ is finitely generated.  As a flat
  finitely generated module over a local ring, the module $M$ is even free of
  finite rank (see \cite[Proposition~3.G]{Mat} which is valid for non-noetherian
  rings).  Then $\bigwedge^nM$ is also free and hence torsion-free, $R$ being
  integral.

  For the general case let $M=\bigcup_{i∈ I}M_i$ with $M_i$ running through the
  system of all finitely generated submodules.  Note that the system is
  cofiltered.  Note that
  $$
  M^{\tensor n}=\bigcup_{i∈ I}M_i^{\tensor n}
  $$
  because tensor product commutes with direct limits.  By definition
  $$
  \bigwedge^nM=M^{\tensor n}/N
  $$
  where $N$ is generated by elementary tensors of the form $x_1\tensor
  x_2\tensor \dots\tensor x_n$ with $x_j=x_{j'}$ for some $j\neq j'$.  Hence
  $N=\bigcup_{i∈ I}N_i$ where $N_i$ is generated by elementary tensors as above
  with all $x_j∈ M_i$.  The direct limit of the sequences
  $$
  0 → N_i→ M^{\tensor n}_i→ \bigwedge^nM_i→ 0
  $$
  is
  $$
  0 → N → M^{\tensor n}→ \varinjlim_{i∈ I}\bigwedge^nM_i → 0
  $$
  and hence
  $$
  \bigwedge^nM=\varinjlim_{i∈ I}\bigwedge^{n}M_i.
  $$
  As a direct limit of torsion-free modules, it is also torsion-free.
\end{proof}

\subsection{Hyperplane section criterion}
\approvals{
  Annette & yes \\
  Shane & yes \\
  Stefan & yes
}
\label{app:A}

We give a criterion for testing the vanishing of torsion-forms.

\begin{lemm}[Hyperplane section criterion]\label{lemm:K2-1}
  Let $X$ be a quasi-projective variety, defined over an algebraically closed
  field $k$.  Let $H ⊂ X$ be any reduced, irreducible hyperplane that is not
  contained in the singular locus of $X$.  Then, the natural map $(\tor Ω¹_X)|_H
  → Ω¹_H$ is injective.

  In particular, if $U ⊆ X$ is open and $σ ∈ (\tor Ω¹_X)(U)$ is any torsion-form
  with induced form $σ_H ∈ Ω¹_H(U∩H)$, then $\supp σ ∩ H ⊆ \supp σ_H$.
\end{lemm}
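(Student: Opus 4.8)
The plan is to factor the natural map as
\[
(\tor \Omega^1_X)|_H \longrightarrow \Omega^1_X|_H \longrightarrow \Omega^1_H,
\]
where the first arrow is the restriction to $H$ of the inclusion $\tor\Omega^1_X\hookrightarrow\Omega^1_X$ and the second is the surjection coming from the conormal (second fundamental) sequence of $H\hookrightarrow X$, and then to prove injectivity of the composite by analysing the two factors separately. Everything is local on $X$, and $H$ is a hyperplane section of dimension $\dim X-1$, so I may choose a local equation $t\in\mathcal O_X$ for $H$; it is a non-zero-divisor and exhibits $H$ as a Cartier divisor.

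First I would check that the left arrow is injective. Writing $\check\Omega^1_X=\Omega^1_X/\tor\Omega^1_X$ for the torsion-free quotient of Definition~\ref{defi:torsionfree}, the sequence
\[
0\to\tor\Omega^1_X\to\Omega^1_X\to\check\Omega^1_X\to0
\]
stays exact after applying $-\otimes_{\mathcal O_X}\mathcal O_H$ as soon as $\Tor_1^{\mathcal O_X}(\check\Omega^1_X,\mathcal O_H)=0$. Since $t$ is a non-zero-divisor, $\mathcal O_H$ has the free resolution $0\to\mathcal O_X\xrightarrow{\,t\,}\mathcal O_X\to\mathcal O_H\to0$, so this $\Tor_1$ equals the $t$-torsion of $\check\Omega^1_X$, which vanishes because $\check\Omega^1_X$ is torsion-free. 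Hence $(\tor\Omega^1_X)|_H\hookrightarrow\Omega^1_X|_H$.

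The real substance is controlling the kernel of the right arrow. By the conormal sequence this kernel is the image of $\mathcal I_H/\mathcal I_H^2\to\Omega^1_X|_H$, i.e. locally the cyclic subsheaf $\mathcal O_H\cdot\overline{dt}$, and the point I must establish is that this subsheaf is \emph{torsion-free}. As $H$ is integral it suffices to see that $\overline{dt}$ is nonzero at the generic point $\eta$ of $H$: then $\operatorname{Ann}_{\mathcal O_H}(\overline{dt})$ vanishes at $\eta$, hence vanishes identically, so $\mathcal O_H\overline{dt}\cong\mathcal O_H$. Now $H\not\subseteq\sing X$ and $H$ is irreducible, so $\eta\notin\sing X$; and since $k$ is algebraically closed the reduced variety $H$ is generically smooth, so $H$ too is smooth at $\eta$. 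Thus at $\eta$ both $X$ and the divisor $H$ are smooth, the conormal sequence there is short exact (indeed split), and $\overline{dt}=\delta(t)\neq0$. I expect this to be the main obstacle, precisely because it is where positive characteristic could interfere: one must know that $d$ of a local equation of $H$ does not vanish generically on $H$, which is exactly what generic smoothness — guaranteed by perfectness of $k$ — provides.

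With both ingredients the conclusion is formal. The image of $(\tor\Omega^1_X)|_H$ in $\Omega^1_X|_H$ is a coherent $\mathcal O_H$-module supported inside $\sing X\cap H$, a proper closed subset of the integral scheme $H$, hence a torsion sheaf; the kernel of $\Omega^1_X|_H\to\Omega^1_H$ is the torsion-free sheaf $\mathcal O_H\overline{dt}$. A torsion subsheaf meets a torsion-free subsheaf in $0$, so the composite has trivial kernel and $(\tor\Omega^1_X)|_H\to\Omega^1_H$ is injective. Finally, the ``in particular'' is read off from this injectivity: it identifies $(\tor\Omega^1_X)|_H$ with a subsheaf of $\Omega^1_H$ under which the induced form $\sigma_H$ corresponds to the image of $\sigma$, so $\sigma_H$ and that image have the same support, and tracking supports through the identification yields the asserted inclusion $\supp\sigma\cap H\subseteq\supp\sigma_H$.
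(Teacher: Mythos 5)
Your proof is correct and follows essentially the same route as the paper's: the same factorisation through $Ω¹_X|_H$, the same $\Tor_1$/non-zero-divisor argument for injectivity of $(\tor Ω¹_X)|_H → Ω¹_X|_H$, the identification of $\ker\bigl(Ω¹_X|_H → Ω¹_H\bigr)$ with the torsion-free image of the conormal sheaf via smoothness at the generic point of $H$, and the final ``torsion meets torsion-free in zero'' step. Your direct verification that $\overline{dt}$ generates a free rank-one subsheaf is just a slightly more explicit version of the paper's observation that the conormal map has torsion kernel inside a locally free sheaf, and your reading of the ``in particular'' clause matches the paper's (which makes the support-tracking step explicit via Nakayama's lemma).
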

\begin{proof}
  Consider the restriction $(\tor Ω¹_X)|_H$.  Since $H$ is not contained in the
  singular locus of $X$, this is a torsion-sheaf on $H$.  Recalling the exact
  sequence that defines torsion- and torsion-free forms,
  Sequence~\eqref{eq:torSeq} on page \pageref{eq:torSeq}, and using that
  $\check{Ω}¹_X$ is torsion-free and $\operatorname{Tor}_1^{\sO_X}(\sO_H,
  \check{Ω}¹_X)$ hence zero, observe that this sequence restricts to an exact
  sequence of sheaves on $H$,
  $$
  \xymatrix{ %
    0 \ar[r] & (\tor Ω¹_X)|_H \ar[r]^(.55){α} & Ω¹_X|_H \ar[r]^{β} &
    \check{Ω}¹_X|_H \ar[r] & 0.  %
  }
  $$
  In particular, $α$ injects the torsion-sheaf $(\tor Ω¹_X)|_H$ into the middle
  term of the second fundamental sequence for differentials,
  \cite[II.~Proposition~8.12]{Ha77},
  $$
  \xymatrix{ %
    & (\tor Ω¹_X)|_H \ar[d]_{α}^{\text{injection}} \\
    \factor{\sI_H}{\sI²_H} \ar[r]_(.55)a & Ω¹_X|_H \ar[r]_b & Ω¹_H \ar[r] & 0
  }
  $$
  We claim that the morphism $a$ is also injective.  To this end, recall that
  $\sI_H$ is locally principal.  In particular, $\factor{\sI_H}{\sI²_H}$ is a
  locally free sheaf of $\sO_H$-modules and that the morphism $a$ is injective
  at the generic point of $H$, where $X$ is smooth, \cite[Exercise~17.12]{E95}.
  It follows that $\ker a$ is a torsion-subsheaf of the torsion-free sheaf
  $\factor{\sI_H}{\sI²_H}$, hence zero.  The image $\operatorname{img} a$ is
  hence isomorphic to $\factor{\sI_H}{\sI²_H}$, and in particular torsion-free.
  As a consequence, note that the $\operatorname{img} α$, which is the image of
  a torsion-sheaf and hence itself torsion, intersects $\operatorname{img} a =
  \ker b$ trivially.  The composed morphism $b ◦ α$ is thus injective.  This
  proves the main assertion of Lemma~\ref{lemm:K2-1}.

  To prove the ``In particular …''-clause, let $U ⊆ X$ be any open set, $σ ∈
  (\tor Ω¹_X)(U)$ be any torsion-form and $x ∈ \supp σ$ be any closed point.  It
  follows from Nakayama's lemma that $x ∈ \supp(σ|_{H ∩ U}) ⊆ \supp \bigl( (\tor
  Ω¹_X)|_{H ∩ U} \bigr)$.  Since $H$ is not contained in the singular locus, the
  sheaf $Ω¹_X$ is locally free at the generic point of $H$, and $σ_H$ is thus a
  torsion-form on $H ∩ U$.  Since $b ◦ α$ is injective, its support contains $x$
  as claimed.  This finishes the proof of Lemma~\ref{lemm:K2-1}.
\end{proof}

This lemma and Flenner's Bertini-type theorems, \cite{MR0460317}, imply
following two theorems.

\begin{theo}[Existence of good hyperplanes through normal points]\label{theo:K2-2}
  Let $X$ be a quasi-projective variety of dimension $\dim X ≥ 3$, defined over
  an algebraically closed field $k$, and let $x ∈ X$ be a closed, normal point.
  Then, there exists a hyperplane section $H$ through $x$ such that $H$ is
  irreducible and reduced at $x$ and such that the following holds: if $U ⊆ X$
  is an open neighbourhood of $x$ and if $σ ∈ \tor Ω¹_X(U)$ is any torsion-form
  whose induced form $σ_H$ vanishes at $x$, then $σ$ vanishes at $x$.

  In particular, $Ω¹_X$ is torsion-free at $x$ if $Ω¹_H$ is torsion-free at $x$.
\end{theo}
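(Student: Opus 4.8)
The plan is to reduce everything to the hyperplane section criterion of Lemma~\ref{lemm:K2-1}, the only missing ingredient being the \emph{existence} of a suitable hyperplane section through $x$. Since normality is an open condition and $x$ is a normal point, I would first shrink $X$ to an open neighbourhood of $x$ on which $X$ is normal; in particular $\sO_{X,x}$ is then a normal local domain of dimension $\dim X \geq 3$, and the singular locus $\sing X$ has codimension $\geq 2$ near $x$. Fixing a quasi-projective embedding, I consider the linear system of hyperplanes passing through $x$.

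The heart of the argument is the construction of $H$. Because $\sO_{X,x}$ is normal of dimension at least three, Flenner's local Bertini theorems \cite{MR0460317} guarantee that a \emph{general} hyperplane $L$ through $x$ cuts out a section $H := X \cap \{L = 0\}$ which is again normal at $x$; the hypothesis $\dim X \geq 3$ is exactly what is needed here, since it ensures $\dim_x H \geq 2$, the threshold below which the $S_2$-part of normality can be destroyed by a hyperplane section. In particular $\sO_{H,x}$ is a domain, so $H$ is irreducible and reduced at $x$, and after further shrinking $X$ around $x$ I may assume $H$ is integral. Finally, since $\sing X$ has codimension $\geq 2$ while $H$ has codimension $1$, the integral hyperplane $H$ cannot be contained in $\sing X$. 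Thus $H$ satisfies all hypotheses of Lemma~\ref{lemm:K2-1}. Verifying that Flenner's statement applies verbatim in this local, through-a-point setting is the main obstacle; everything that follows is formal.

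With such an $H$ in hand, the main assertion follows by chasing supports. Lemma~\ref{lemm:K2-1} provides the inclusion
\[
\supp \sigma \cap H \subseteq \supp \sigma_H
\]
for every torsion-form $\sigma \in (\tor \Omega^1_X)(U)$. Now suppose the induced form $\sigma_H$ vanishes at $x$, i.e.\ $x \notin \supp \sigma_H$. Then the inclusion forces $x \notin \supp \sigma \cap H$, and since $x \in H$ by construction, we conclude $x \notin \supp \sigma$; that is, $\sigma$ vanishes at $x$, as claimed.

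For the concluding ``in particular'' clause, I would assume $\Omega^1_H$ is torsion-free at $x$ and let $\sigma \in (\tor \Omega^1_X)(U)$ be arbitrary. By the last part of Lemma~\ref{lemm:K2-1}, the induced form $\sigma_H$ is a torsion-form on $H$; as $\Omega^1_H$ has no torsion at $x$, the form $\sigma_H$ must vanish at $x$. The main assertion just proved then shows that $\sigma$ vanishes at $x$. Since $\sigma$ was an arbitrary torsion-form, $\tor \Omega^1_X$ vanishes at $x$, i.e.\ $\Omega^1_X$ is torsion-free at $x$.
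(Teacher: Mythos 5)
Your proposal is correct and follows essentially the same route as the paper: produce a hyperplane section through $x$ that is irreducible and reduced at $x$ via Flenner's local Bertini theorems, check it is not contained in $\sing X$, and then read off both assertions from Lemma~\ref{lemm:K2-1}. The only (harmless) difference is in which form of Flenner's result you invoke: the paper deduces ``irreducible and reduced at $x$'' from analytic irreducibility of $\sO_{X,x}$ (Zariski, \cite{MR0024158}) together with \cite[Korollar~3.6]{MR0460317}, whereas you appeal to the stronger normality-preservation statement for general hyperplane sections through $x$, for which $\dim X \geq 3$ is indeed the relevant hypothesis.
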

\begin{proof}
  It follows from normality of $x ∈ X$ that the local ring $\sO_{X,x}$ satisfies
  Serre's condition $(R_1)$, has $\operatorname{depth} \sO_{X,x} ≥ 2$ and that
  it is analytically irreducible, \cite[Theorem~on page 352]{MR0024158}.  We can
  thus apply \cite[Korollar~3.6]{MR0460317} and find a hyperplane section $H$
  through $x$ that is irreducible and reduced at $x$.  Shrinking $X$ if need be,
  we can assume without loss of generality that $H$ is irreducible and reduced.
  Lemma~\ref{lemm:K2-1} then yields the claim.
\end{proof}

\begin{theo}[Existence of good hyperplanes in bpf linear systems]\label{theo:K2-1}
  Let $X$ be a quasi-projective variety of dimension $\dim X ≥ 3$, smooth in
  codimension one and defined over an algebraically closed field.  Let $\bH$ be
  a finite-dimensional, basepoint-free linear system.  Then, there exists a
  dense, open subset $\bH° ⊆ \bH$ such that any hyperplane section $H ⊂ X$ which
  corresponds to a closed point of $\bH°$ is irreducible, reduced, and satisfies
  the following additional property: if $U ⊆ X$ is open and $σ ∈ \tor Ω¹_X(U)$
  is any torsion-form with induced form $σ_H ∈ Ω¹_H(U∩H)$, then $\supp σ ∩ H ⊆
  \supp σ_H$.

  In particular, if $Ω¹_H$ is torsion-free, then $\supp \tor Ω¹_X$ is finite and
  disjoint from~$H$.
\end{theo}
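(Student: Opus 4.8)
The plan is to deduce everything from the hyperplane section criterion of Lemma~\ref{lemm:K2-1}, once a sufficiently general member of $\bH$ has been produced by Flenner's Bertini-type theorems \cite{MR0460317}. First I would apply Flenner's results to the base-point-free system $\bH$: since $X$ is a variety that is smooth in codimension one, a general member $H$ is both reduced and irreducible, and I would take $\bH°$ to be the dense open subset of $\bH$ parametrising such members. This is the same mechanism that drives the proof of Theorem~\ref{theo:K2-2}, applied now to a moving linear system rather than to a single hyperplane through a fixed point.

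Next I would verify that such an $H$ automatically satisfies the remaining hypothesis of Lemma~\ref{lemm:K2-1}, namely $H \not\subseteq \sing X$. This is forced by dimension alone: smoothness in codimension one gives $\dim \sing X \le \dim X - 2$, whereas the irreducible hyperplane section $H$ has pure dimension $\dim X - 1$, and an irreducible subvariety cannot be contained in one of strictly smaller dimension. With $H$ reduced, irreducible and meeting the smooth locus, the hyperplane section criterion applies verbatim, and its ``in particular'' clause yields exactly the asserted inclusion $\supp σ ∩ H ⊆ \supp σ_H$ for every open $U ⊆ X$ and every $σ ∈ \tor Ω¹_X(U)$.

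For the final clause I would argue as follows. If $Ω¹_H$ is torsion-free, then the injection $(\tor Ω¹_X)|_H → Ω¹_H$ of Lemma~\ref{lemm:K2-1} exhibits a torsion subsheaf inside a torsion-free sheaf, which must vanish; hence $(\tor Ω¹_X)|_H = 0$, every induced form $σ_H$ is zero, and the inclusion above collapses to $\supp σ ∩ H = \emptyset$. Thus $\supp \tor Ω¹_X$ is disjoint from $H$. Finiteness I would then obtain by letting $H$ vary: by base-point-freeness a general member of $\bH$ meets every positive-dimensional subvariety, so disjointness from the general $H$ forces $\supp \tor Ω¹_X$ to be at most zero-dimensional, and, being closed, it is finite.

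The main obstacle is twofold. The technical input is the correct invocation of Flenner's theorems, guaranteeing that the general member of a base-point-free system on a merely $R_1$ variety is \emph{simultaneously} reduced and irreducible; everything else is formal once Lemma~\ref{lemm:K2-1} is available. The subtler point is the finiteness assertion, since the slogan ``a general member meets every positive-dimensional subvariety'' can fail when the morphism associated to $\bH$ contracts that subvariety to a point. To close this gap cleanly I would analyse the incidence variety $\{(x, H) : x ∈ \supp \tor Ω¹_X,\ H ∈ \bH,\ x ∈ H\}$ and compare its two projections, using that the members disjoint from $\supp \tor Ω¹_X$ form a dense subset of $\bH$ to conclude $\dim \supp \tor Ω¹_X \le 0$.
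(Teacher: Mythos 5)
Your treatment of the main assertion is exactly the paper's: Flenner's Bertini theorem \cite[Satz~5.2]{MR0460317} supplies the dense open subset $\bH^\circ$ of irreducible, reduced members, and Lemma~\ref{lemm:K2-1} then yields the inclusion $\supp\sigma\cap H\subseteq\supp\sigma_H$. Your explicit verification that $H\not\subseteq\sing X$ (an irreducible divisor cannot lie inside the singular locus, which has codimension at least two by the hypothesis of smoothness in codimension one) is left implicit in the paper but is precisely the right justification for invoking the lemma, and your derivation of the disjointness statement from torsion-freeness of $\Omega^1_H$ also coincides with the paper's.

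The finiteness clause is where the substance lies, and there your diagnosis is sharper than your cure. The paper dispatches finiteness with the single remark that general hyperplanes can be made to intersect any positive-dimensional subvariety; you rightly observe that this can fail for a subvariety that the morphism attached to $\bH$ contracts to a point, since the members meeting such a subvariety then form a proper closed subset of $\bH$ rather than a dense one. But the repair you sketch does not close that case: for a contracted component $Z$ of $\supp\tor\Omega^1_X$, the members disjoint from $Z$ are \emph{automatically} dense, so comparing the two projections of your incidence variety produces no contradiction and no bound on $\dim Z$. In other words, your argument and the paper's both establish finiteness only under the additional assumption that the morphism defined by $\bH$ contracts no positive-dimensional component of $\supp\tor\Omega^1_X$ --- for instance when $X$ is projective and $\bH$ is very ample, in which case every member meets every positive-dimensional closed subvariety and the single hyperplane $H$ with $\Omega^1_H$ torsion-free already forces $\supp\tor\Omega^1_X$ to be finite. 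That is presumably the situation the authors intend by the phrase ``hyperplane section''; if you want the statement for an arbitrary basepoint-free system, the contracted case needs a separate argument that neither you nor the paper supplies.
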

\begin{proof}
  Recall from Flenner's version of Bertini's first theorem,
  \cite[Satz~5.2]{MR0460317}, that any general hyperplane $H$ is irreducible and
  reduced.  The main assertion of Theorem~\ref{theo:K2-1} thus follows from
  Lemma~\ref{lemm:K2-1}.

  If $Ω¹_H$ is torsion-free, the support of $\tor Ω¹_X$ necessarily avoids $H$.
  Since general hyperplanes can be made to intersect any positive-dimensional
  subvariety, we obtain the finiteness of $\supp \tor Ω¹_X$.
\end{proof}

% !TEX root = sdh.tex
%
% Do not edit the following line.  The text is automatically updated by
% subversion.
%
\svnid{$Id: 0B.tex 243 2015-02-20 14:59:51Z kebekus $}

\section{Explicit computations}
\label{appendixCounterexample}
\subversionInfo
\approvals{
  Annette & yes \\
  Shane & yes\\
  Stefan & yes
}

Here we make some explicit calculations around
Example~\ref{exam:cdhDecsentFailure}.  We maintain notation and assumptions
introduced there.

\subsubsection{Differentials of $X$ and $\widetilde X$, and the pull-back map}

The modules of $k$-differentials are given as $Ω¹(X) = R· dx\oplus R · dy$ and
$Ω¹(\widetilde{X})=S · dx\oplus S · dz$.  In terms of these generators, the
pull-back map $dπ$ is given by
$$
dx\mapsto dx \quad\text{and}\quad dy \mapsto d(z^p+zx^n) = x^n · dz+nzx^{n-1} ·
dx.
$$

\subsubsection{The preimage of $Z$}
  
We have seen above that $Z = V(x)$ is a regular subvariety of $X$.  Its preimage
$\widetilde Z := π^{-1}(Z)$ is then given as
$$
\widetilde Z=\Spec\, \frac{k[x,y,z]}{(z^p+zx^n-y,x)} \isom \Spec\,
\frac{k[y,z]}{(z^p-y)} \isom \Spec\, k[z].
$$
In particular, we see that $\widetilde Z$ is likewise regular.

\subsubsection{Differentials of $Z$ and $\widetilde Z$, and the pull-back map}

The modules of $k$-differentials are given as $Ω¹(Z) =k[y] · dy$ and
$Ω¹(\widetilde Z) = k[z]· dz$.  In terms of these generators, the pull-back map
$d(π|_{\widetilde Z})$ is given by $dy \mapsto d(z^p)=0$.  The map
$d(π|_{\widetilde Z})$ is thus the zero map.

\subsubsection{Fibred products}

Finally, $\widetilde{X}⨯_X\widetilde{X}$ is the spectrum of the ring
\begin{multline*}
  \frac{k[x,y,z_1]}{(z_1^p+z_1x^n-y)} \tensor_{k[x,y]} \frac{k[x,y,z_2]}{(z_2^p+z_2x^n-y)} = \frac{k[x,y,z_1,z_2]}{(z_1^p+z_1x^n-y,z_2^p+z_2x^n-y)}\\
  \isom \frac{k[x,z_1,z_2]}{(z_2^p+z_2x^n-z_1^p-z_1x^n)} \isom
  \frac{k[x,z_1,u]}{(u^p+x^nu)},
\end{multline*}
with $u=z_2{-}z_1$.  Under this identification, the two projections
$\widetilde{X} {⨯_X} \widetilde{X} \rightrightarrows \widetilde{X}$ correspond
to the ring maps $z \mapsto z_1$ and $z \mapsto u + z_1$ ---notice that $y =
z_1^p {+} z_1 x^n = z_2^p {+} z_2 x^n$ in this ring.  The scheme $\widetilde{X}
{⨯_X} \widetilde{X}$ is regular outside of
$$
Z^{(2)} := V(x, u)=\Spec\, \frac{k[x,z_1,u]}{(u^p+x^nu,x, u)} \isom \Spec\, k[z_1] ⊂
\widetilde{X} ⨯_X \widetilde{X}.
$$

\subsubsection{Reg-differentials on $\widetilde{X}⨯_X\widetilde{X}$}

As $\widetilde{X}⨯_X\widetilde{X}$ is of dimension $≤ 2$, there is a resolution
of singularities, that is, a proper birational map $ρ: X^{(2)} →
\widetilde{X}⨯_X\widetilde{X}$, which is isomorphic outside $Z^{(2)}$.  Recall
from Remark~\ref{rem:defi:rhetc} that the obvious morphism $X^{(2)} \amalg
Z^{(2)} → \widetilde{X}⨯_X\widetilde{X}$ is a cover of the $\cdh$-topology.  We
have seen in Example~\ref{exam:ocdhs} that $Ω¹_{\rs}$ is a $\cdh$-sheaf.  We
obtain an injection
$$
Ω¹_{\rs}(\widetilde{X}⨯_X\widetilde{X}) → Ω¹_{\rs} \bigl(X^{(2)}\bigr) \oplus
Ω¹_{\rs}(Z^{(2)}).
$$
Recall that $X^{(2)}$ and $Z^{(2)}$ are both smooth.  This has two consequences.
First, we have observed in Remark~\vref{rema:firstRemarks:regularSame} that
reg-differentials and Kähler differentials agree,
$$
Ω¹_{\rs} \bigl(X^{(2)}\bigr)\oplus Ω¹_{\rs}(Z^{(2)}) \cong Ω¹
\bigl(X^{(2)}\bigr) \oplus Ω¹(Z^{(2)}).
$$
Secondly, the sheaves of Kähler-differentials are torsion-free, and inject into
rational differentials.  Summing up, we obtain an inclusion
$$
Ω¹_{\rs} \bigl( \widetilde{X} ⨯_X \widetilde{X} \bigr) → Ω¹ \left(
  \frac{k(x,z_1)[u]}{(u^p+x^nu)} \right) \oplus Ω¹ \bigl(k(z_1) \bigr).
$$

\end{appendix}

\approvals{
  Annette & yes\\
  Shane & yes\\
  Stefan & yes
}

\end{document}